\newcommand{\defeq}{\mathrel{\mathop:}=}
\newcommand{\E}{\mathbf{E}}
\renewcommand{\P}{\mathbf{P}}
\newcommand{ \p}{\boldsymbol{p}}
\newcommand{\R}{\mathbf{R}}
\newcommand{\N}{\mathbf{N}}
\newcommand{\I}{\mathds{1}}
\newcommand{\rh}{\mathbb{H}}
\newcommand{\ins}{\text{ e.ins.}}
\newcommand{\sep}{\text{ e.sep.}}
\newcommand{\Ins}{edgewise inseparable }
\newcommand{\Sep}{edgewise separable }
\theoremstyle{plain}
\newtheorem{theorem}{Theorem}[section]
\newtheorem{lemma}[theorem]{Lemma}
\newtheorem{proposition}[theorem]{Proposition}
\newtheorem{cor}[theorem]{Corollary}
\theoremstyle{definition}
\newtheorem{definition}[theorem]{Definition}
\newtheorem{example}{Example}[section]
\newtheorem{remark}[theorem]{Remark}
\numberwithin{equation}{section}
\newcommand{\Var}{\text{Var}}
\def\({\left(}
\def\){\right)}
\def\[{\left[}
\def\]{\right]}
\title {Normal approximation for subgraph count\\ in random hypergraphs
\footnotetext{2020 MS Classification: 05C65, 05C80, 	60F05.\\
{\it Key words and phrases}: random hypergraph, subgraph count, asymptotic normality, Kolmogorov distance, Wasserstein distance.
       }}
\author{Wojciech Michalczuk\thanks{ Wroc{\l}aw University of Science and Technology, Ul. Wybrze\.ze Wyspia\'nskiego 27, Wroc{\l}aw, Poland.
e-mail: {\tt w.s.michalczuk@gmail.com}.}
\and 
Miko{\l}aj Nieradko\thanks{ Wroc{\l}aw University of Science and Technology, Ul. Wybrze\.ze Wyspia\'nskiego 27, Wroc{\l}aw, Poland.
e-mail: {\tt 	mikolaj.nieradko@gmail.com}.}
\and 
Grzegorz Serafin\thanks{ Wroc{\l}aw University of Science and Technology, Ul. Wybrze\.ze Wyspia\'nskiego 27, Wroc{\l}aw, Poland.
e-mail: {\tt grzegorz.serafin@pwr.edu.pl}.}}
\begin{document}
\maketitle

\begin{abstract} 
A non-uniform and inhomogeneous random hypergraph model is considered, which is a straightforward extension of the celebrated  binomial random graph model $\mathbb G(n, p)$. We establish necessary and sufficient conditions for small hypergraph count to be asymptotically normal, and complement them with convergence rate in both the Wasserstein and Kolmogorov distances. Next we narrow our  attention to the homogeneous model and relate the obtained results to the fourth moment phenomenon. Additionally, a short proof of necessity of aforementioned conditions  is presented, which seems to be absent in the literature even in the context of the model $\mathbb G(n, p)$. 
\end{abstract}

\section{Introduction}

Let $\rh(n,  \p)$,  $ \p=(p_1, p_2,\ldots, p_{n})$,  be a random hypergraph model on vertices from the set $[n]:=\{1,2,\ldots,n\}$, in which every non-empty edge $e$ exists independently with probability $p_{|e|}\in(0,1)$ depending on its size $|e|$. In particular, loops are included. When restricted to edges of size $2$ only,  we obtain  the celebrated binomial (Gilbert-Erd\H os-R\'enyi) random graph model $\mathbb G(n, p) $. In order to understand better the context of   this article,  let us briefly introduce the model $\mathbb G(n, p) $,  and recall its properties that are relevant from our point of view.

In the  random graph model $\mathbb G(n, p) $ any two  of $n$ vertices are independently connected with probability $p=p(n)$. When $n\rightarrow\infty$ and $p$ depends on $n$ in a suitable manner, one can observe many interesting phenomena, that have been subject of research for last decades (see  \cite{FK, JLR} and references therein). One of the studied quantities is so-called small subgraph count. Namely, for a fixed graph $G$  we denote by $Z_G^n$ the random variable that counts the number of subgraphs of $\mathbb G(n, p)$ that are isomorphic to $G$. It is shown in \cite{Ruc88}  that  the standardized variable $\widetilde Z_G^n=(Z_G^n-\E[Z_G^n])/\sqrt{\Var[Z_G^n]}$  converges in distribution to a r.v. $\mathcal N$ with the standard normal distribution $\mathcal N(0,1)$ if and only  if
\begin{align}\label{eq:ERns}
\min_{F\subseteq G:\,e_F\geq1}\{n^{v_F}p^{e_F}\}\rightarrow\infty\ \ \ \ \text{ and }\ \ \ \  (1-p)n^2\rightarrow\infty,
\end{align}
where $v_F$ and $e_F$ stand for the numbers of vertices and edges of $F$, respectively. Later on, this result has been complemented with the convergence rate of the form \cite{BKR} 
\begin{align}\label{eq:d_1}
d_1(\widetilde Z_G^n, \mathcal N):=\sup_{h\in \mathcal L}\big|\E[h(\widetilde Z_G^n)]-\E[h(\mathcal N)]\big|\leq C\,\big((1-p)\min_{F\subseteq G:\,e_F\geq1}\{n^{v_F}p^{e_F}\}\big)^{-1/2},
\end{align}
where  $C$ is a constant depending on $G$ and $\mathcal L$ is the class of functions $h:\R\rightarrow \R$ satisfying $\|h\|_\infty+\|h'\|_\infty\leq1$. The conjecture that the   same rate holds for more desired Kolmogorov distance (cf. \eqref{eq:d_K}) remained unsolved for the next  three decades, and  eventually it was confirmed in  \cite{PSbej}, see also \cite{R, Z, ER}.

 There exists a vast literature on random hypergraphs, however, mostly uniform (every edge is of the same size) and homogeneous (edge existence probabilities are the same) models are considered, see e.g., \cite{KL, BR, BR2, LZ},  and also \cite{B, C, CDP, DN, GZCN} for some other models. 
   In \cite{dJhyper}, necessary and sufficient conditions for asymptotic normality of $\widetilde Z_H^n$ have been obtained for  $k$-uniform hypergraphs $H$.
 Comparing to \eqref{eq:ERns}, the only change is that $n^2$ is replaced with $n^k$.
From {a} technical point of view, deriving convergence rates of $\widetilde Z_H^n$ to the standard normal distribution seems very similar in random graphs and uniform random hypergraphs. This might explain, why there is no separate result of this kind for  the latter ones. As mentioned in the last paragraph, even for random graphs estimates of the Kolmogorov distance are very recent \cite{PSbej}. In inhomogeneous random (hyper)graphs some novel {phenomena} occur, as for instance some of the edge probabilities may be close to zero, while other ones are close to one. A  uniform random hypergraph with strong inhomogeneity has been studied in \cite{CDP} in the context of large deviations. On the other hand, non-uniformity creates another kind of obstacles. For example, when all of the edge probabilities are close to one, the subgraph count might be approximated by a sum of independent random variables,  nonetheless the Berry-Esseen  theorem still does not provide us with bounds recovering conditions for asymptotic normality. The level of generality presented in this article has been   recently addressed 
in  \cite{Dewar}, where  some kind of a counterpart  of a threshold for the property $\{Z_H^n>0\}$ is derived. Also, a bound on the Wasserstein distance between $\widetilde Z_H^n$ and  $\mathcal N$ in a similar model is established in \cite{V}.

Following the history of research on  $Z_G^n$ in the binomial model $\mathbb G(n, p)$, we devote this article to examining  the asymptotic normality of  $\widetilde Z_H^n$, where $Z_H^n$ stands for the  variable counting copies of a hypergraph $H$ in $\rh(n,  \p )$.
Precisely,  we derive  necessary and sufficient conditions for  $\widetilde Z_H^n$ to {be} asymptotically normal,  and complement them with  convergence rates in both: Wasserstein and Kolmogorov distances. The obtained results extend the ones known for   $\mathbb G(n, p)$. Next, we narrow our attention to the homogeneous case, where  $  \p =(p ,p,\ldots , p)$ for some $p\in(0,1)$, and  express the aforementioned results by means of the excess kurtosis $\E[(\widetilde Z^n_H)^4 ]-3$. This refers  to the so-called  {\it fourth  moment phenomenon}, which has been initially studied by Peter de Jong \cite{dJ87, dj90} for general $U$-statistics  and recently gained a lot of attention in the field of stochastic analysis, where its quantitative versions in various settings are studied \cite{NP, DP, DK, D24, L}. Additionally, we provide a  very simple proof of the necessary conditions for the asymptotic normality of $\widetilde Z_H^n$, which seems to be new even for the  model $\mathbb G(n, p)$.

The complex structure of the model $\rh(n, \p)$ makes it resistant to many methods applied to the $\mathbb G(n, p)$ model, including the classical ones from \cite{BKR, JLR, dJhyper, Ruc88}.   In order to obtain the convergence rates we employ the tools developed in \cite{PSejp2, PSbej, PSstoch}, as they seem more flexible. Nevertheless, direct application does not lead to satisfactory results, therefore much more  subtle strategy is implemented.  The starting point of most of the proofs is  the Hoeffding decomposition of $\widetilde Z_H^n$, which is not only needed for the aforementioned approach, but also allows us to determine the core terms responsible for the asymptotic behaviour. To obtain the necessary condition the Feller-L\'evy condition is verified, and then Lindeberg condition is presented in the simplest possible form. Let us note here that stronger Lyapunov condition fails in the case of inhomogeneous model.  
 Finally, the derivation of the  results related to the fourth moment phenomenon relies mainly on  combinatorial arguments.

The paper is organized as follows. In Section 2 we describe the model $\rh(n, \p)$ in detail, establish and discuss the Hoeffding decomposition of the variable $\widetilde Z_H^n$, and also gather essential notation used in the article. Section 3 is devoted to the main results of the paper,  while the remaining sections contain  their proofs (for $H$ without isolated vertices, see Remark \ref{rem:isolated}). More precisely,  Section 4 deals with convergence rates, which are then estimated by the excess kurtosis in Section 5. In Section 6  the necessary and sufficient  conditions for the asymptotic normality of $\widetilde Z_H^n$ are derived. Finally, in Appendix we provide some general properties of the Kolmogorov and Wasserstein distances.

\section{Preliminaries}
\subsection{Basic properties of the model $\rh(n,  \p )$}
As mentioned in Introduction, the random hypergraph model $\rh(n,  \p )$ is built on $n$ vertices, which we take from the set $[n]=\{1, 2, 3,\ldots , n\}$. We impose no restrictions on sizes of edges (or hyperedges), thus  any of $2^n-1$ edges from the set $2^{[n]}\backslash \{\emptyset\}$ may occur. The complete hypergraph $([n], 2^{[n]}\backslash \{\emptyset\})$ will be denoted by $K_n^*$. 
The vector $  \p =  \p (n)$ is the vector of edge probabilities, i.e. every edge   $e$ of size $|e|$ exists, independently of the other edges, with the probability $p_{|e|}\in(0,1)$.
 
 Our interest is focused on the random variable counting isomorphic copies of a given hypergraph $H$ in the random hypergraph $\rh(n,   \p )$. Precisely, for a fixed $H$ we define 
 \begin{equation}\label{form:copy1}
		Z_H^n=\sum_{\substack{H'\subseteq K^*_n \\ H' \simeq H}} \I _{\left\{ H' \subseteq \rh(n, \boldsymbol{p}) \right\}}.
	\end{equation}
Here, the relation $\simeq$ denotes standard isomorphism between two hypergraphs.   Furthermore, $\subseteq$ stands for strong (and not necessarily induced) inclusion of hypergraphs, i.e. for two hypergraphs $F$ and $H$ we write $F\subseteq H$ whenever $V(F)\subseteq V(H)$ and $E(F)\subseteq E(H)$, where $V(H)$ and $E(H)$  are the number of vertices and edges, respectively, of the hypergraph $H$.  Let us note that in the literature one often writes $\left\{ H \subseteq \rh(n, \boldsymbol{p}) \right\}$ in the sense of $\left\{Z_H^n\geq1\right\}$.  For clarity, we use only the latter notation of this property.

By $\widetilde Z_H^n$ we denote the standardization of the  random variable $Z_H^n$, i.e.
$$\widetilde Z_H^n=\frac{Z_H^n-\E[Z_H^n]}{\sqrt{\Var{Z_H^n}}}$$ 
Expected value of $Z_H^n$ is easy to calculate and is given by $\E[Z_H^n]=N_H^n  \prod _{e\in E(H)}p_{|e|}$, where
$N_H^n:=|\{H'\subseteq K^*_n:H' \simeq H\}|$  is the total number of copies of $H$ in the complete hypergraph $K_n^*$. In the case of the variance, the formula is  more complicated. Nevertheless, from our point of view estimates are sufficient and they are given by (see \cite[proof of Theorem 3.2]{Dewar}, and also formula \eqref{eq:VarZ} below for the variance) 
\begin{align}\label{eq:Eestgen}
\E[Z_H^n]&\approx n^{v_H}P_H,\\\label{eq:Vestgen}
\Var[Z_H^n]&\approx P_H^2\max_{F\subseteq H, e_F \geq 1} n^{2v_H-v_F} \frac{1 - P_F }{P_F}\approx P_H^2\max_{F\subseteq H, e_F \geq 1} n^{2v_H-v_F} \frac{Q_F}{P_F},
\end{align}
where $v_H=|V(H)|$ and 
\begin{align*}
P_H= \prod _{e\in E(H)}p_{|e|}, \hspace{30pt }Q_H= \prod _{e\in E(H)}(1-p_{|e|}).
\end{align*}
The notation $\approx $ is understood as follows:  for two sequences of non-negative functions $f_n$ and $g_n$ we write $f_n\approx g_n$ whenever
$$C_1 f_n\leq g_n\leq C_2f_2$$
holds on the indicated domain  for almost all $n\in\N$. Here,  $C_1, C_2>0$ are constant possibly dependent on the fixed hypergraph $H$. If they also  depend on some other parameter $a$, we indicate it by $\stackrel{a}{\approx}$. Additionally, if we are interested only in one of inequalities above, we write $f_n\lesssim g_n$ or $g_n\lesssim f_n$.

\subsection{Hoeffding decomposition }
The Hoeffding decomposition is a very useful tool originally introduced in \cite{H} for symmetric $U$-statistics. Below, we present it in the general setup. It has been already exploited in the context of subgraph count in random graphs in  \cite{Bloznelis, JN, dJhyper, PSbej, PSstoch}.

\begin{definition}\label{def:hoeff}
		Let $(X_1, \ldots, X_n)$ be a family of independent random variables,  and $(\mathcal{F}_J)_{J\subseteq [n]}$ be a family of  $\sigma$-fields defined by
		$$\mathcal{F}_J\defeq \sigma(X_j : j\in J), ~~~~J\subseteq [n].$$
		A centered $\mathcal{F}_{[n]}$-measurable  random variable admits the Hoeffding decomposition whenever it might be expressed as
		$$W_n = \sum_{J\subseteq [n]}W_J,$$ 
		where every $W_J$, $J\subseteq [n]$ is a  $\mathcal{F}_J$-measurable random variable and  
		$$\mathbb{E}[W_J \mid \mathcal{F}_K] = 0, ~~~~\text{if}~~~~J\nsubseteq K\subseteq [n].$$
		In particular,  if  for some fixed $d\in[n]$ it holds that $W_J=0$ a.s. for $|J|>d$ and there exists $J\subseteq[n]$ such that $|J|=d$ and $\Var[W_J]>0$,  we call $W_n$ a generalized $U$-statistic of order $d$.
	\end{definition}
	Let us denote by  $e_1, e_2, \ldots, e_{2^n-1}$  all the edges of the complete hypergraph $K^*_n$. Then, let $X_1, X_2, \ldots, X_{2^n-1}$ be independent random variables representing the existence of these edges, such that for any $i\in[2^n-1]$   $X_i$ is a random variable with the Bernoulli distribution $\mathcal{B}(1, p_{|e_i|})$.
	Notice that  hypergraphs without isolated vertices are uniquely determined by their edges. Thus, for such a hypergraph $H$ we have (cf. Remark \ref{rem:isolated})
	\begin{equation}\label{form:copy2}
		Z_H^n=\sum_{A\subseteq [2^n-1]} \I _{A \sim H}  \prod_{a\in A}X_a,
	\end{equation}
	where ${A \sim H}$ means that  the edges $\{e_a:a\in A\}$ constitute a hypergraph which is isomorphic to $H$. Clearly, only the terms corresponding to the sets $A$ satisfying  $|A|=e_H$ do not vanish.
	\begin{theorem}\label{theo:hoeff}
		For a hypergraph $H$ without isolated vertices  the random variable $\widetilde Z_H^n$ admits the Hoeffding decomposition
	\begin{align*}
\widetilde Z_H^n=\sum_{\substack{B\subseteq [2^n-1]\\1\leq |B|\leq e_H}} W_B,
	\end{align*}
		where 
		\begin{align}\label{eq:W_B}
		W_B=\frac{P_H}{\sqrt{\mathrm{Var} [Z_H^n]}} \[ \prod_{b\in B}\(\widetilde X_b\sqrt{\frac{1-p_{|e_b|}}{p_{|e_b|}}}\)\] \sum_{\substack{A\subseteq[2^n-1]\\A\cap B = \emptyset}}\I _{A\cup B \sim H}.
		\end{align}
		
	\end{theorem}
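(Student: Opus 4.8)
The plan is to obtain the decomposition by direct expansion, exploiting the fact that the edge variables $X_b$ are independent Bernoullis, so that the Hoeffding projections of a product factorize over the factors. First I would write each variable in standardized form $X_b = p_{|e_b|} + \sigma_b\widetilde X_b$, where $\sigma_b=\sqrt{p_{|e_b|}(1-p_{|e_b|})}$ and $\widetilde X_b=(X_b-p_{|e_b|})/\sigma_b$ has zero mean and unit variance. Substituting this into the product over an edge set $A$ and multiplying out gives
\begin{equation*}
\prod_{a\in A} X_a = \sum_{B\subseteq A}\Big(\prod_{b\in B}\sigma_b\widetilde X_b\Big)\prod_{a\in A\setminus B}p_{|e_a|}.
\end{equation*}

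Next I would insert this identity into \eqref{form:copy2}. Since only sets $A$ with $A\sim H$ contribute and each such set carries $\prod_{a\in A}p_{|e_a|}=P_H$, the factor $\prod_{a\in A\setminus B}p_{|e_a|}$ equals $P_H/\prod_{b\in B}p_{|e_b|}$. Interchanging the two summations and grouping by $B$ turns the inner sum over admissible $A\supseteq B$ into $\sum_{A\cap B=\emptyset}\I_{A\cup B\sim H}$, after relabelling $A\setminus B$ as $A$. Using $\sigma_b/p_{|e_b|}=\sqrt{(1-p_{|e_b|})/p_{|e_b|}}$, the coefficient of the resulting $B$-term is exactly $P_H\prod_{b\in B}\big(\widetilde X_b\sqrt{(1-p_{|e_b|})/p_{|e_b|}}\big)\sum_{A\cap B=\emptyset}\I_{A\cup B\sim H}$. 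The single term $B=\emptyset$ reproduces $\E[Z_H^n]=N_H^nP_H$, so subtracting it and dividing by $\sqrt{\Var[Z_H^n]}$ yields precisely $\sum_{1\le|B|\le e_H}W_B$ with $W_B$ as in \eqref{eq:W_B}; the upper bound $|B|\le e_H$ is forced because every copy of $H$ has exactly $e_H$ edges, so $B\subseteq A$ with $A\sim H$ implies $|B|\le e_H$.

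It then remains to check that the $W_B$ genuinely constitute a Hoeffding decomposition in the sense of Definition \ref{def:hoeff}. Measurability is immediate, since for fixed $B$ the indicator sum is deterministic and $W_B$ depends only on $\{X_b:b\in B\}$, hence is $\mathcal{F}_B$-measurable. For the orthogonality relation I would use independence: for $K\subseteq[2^n-1]$,
\begin{equation*}
\E\Big[\prod_{b\in B}\widetilde X_b \,\Big|\, \mathcal{F}_K\Big]=\prod_{b\in B\cap K}\widetilde X_b\prod_{b\in B\setminus K}\E[\widetilde X_b],
\end{equation*}
which vanishes whenever $B\setminus K\neq\emptyset$ because $\E[\widetilde X_b]=0$; that is, $\E[W_B\mid\mathcal{F}_K]=0$ as soon as $B\nsubseteq K$. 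I do not expect a serious obstacle here: the content lies entirely in the bookkeeping of the two interchanged sums and in keeping the normalizing constants aligned, the only genuinely probabilistic input being the centering and independence of the $\widetilde X_b$ used in the last display.
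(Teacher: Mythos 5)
Your proposal is correct and follows essentially the same route as the paper: expanding $X_a=(X_a-p_{|e_a|})+p_{|e_a|}$ inside the product, regrouping the resulting sum by the set $B$, identifying the $B=\emptyset$ term with $\E[Z_H^n]$, and noting that the indicator forces $|B|\le e_H$. Your explicit verification of the defining property $\E[W_B\mid\mathcal{F}_K]=0$ for $B\nsubseteq K$ via independence and centering is slightly more detailed than the paper's one-line remark on uncorrelatedness, but it is the same underlying observation.
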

	\begin{proof} The uncorrelatedness  of $W_B$ and $W_{B'}$ for $B\neq B'$ is clear, since every $\widetilde X_b$ is centered.  
		Rewriting  \eqref{form:copy2} we get 
		\begin{align*}
		Z_H^n&=\sum_{A\subseteq [2^n-1]} \I _{A \sim H}  \prod_{a\in A}X_a=\sum_{A\subseteq [2^n-1]} \I _{A \sim H}  \prod_{a\in A}\big[(X_a-p_{|e_a|})+p_{|e_a|}\big]\\
		&=\sum_{A\subseteq [2^n-1]}\I _{A \sim H}  \sum_{B\subseteq A}\( \prod_{b\in B}(X_b-p_{|e_b|})\)\( \prod_{a\in A\backslash B}p_{|e_a|}\)\\
		&=\sum_{A\subseteq [2^n-1]}\I _{A \sim H}  \sum_{B\subseteq A}\( \prod_{b\in B}\frac{X_b-p_{|e_b|}}{p_{|e_b|}}\)\( \prod_{a\in A}p_{|e_a|}\)\\
			&=P_H\sum_{A\subseteq [2^n-1]}\I _{A \sim H}  \sum_{B\subseteq A} \prod_{b\in B}\(\widetilde X_b\sqrt{\frac{1-p_{|e_b|}}{p_{|e_b|}}}\)\\
			&=P_H\sum_{B\subseteq [2^n-1]}  \prod_{b\in B}\(\widetilde X_b\sqrt{\frac{1-p_{|e_b|}}{p_{|e_b|}}}\) \sum_{\substack{A\subseteq[2^n-1]\\A\cap B = \emptyset}}\I _{A\cup B \sim H}.
		\end{align*}
		Observe now that the term of the sum corresponding to $B=\emptyset$ is
		$P_H  \sum_{A}\I _{A \sim H}=\E\[Z_H^n\]$. Furthermore, due to the last indicator $\I _{A\cup B \sim H}$, we can restrict the size of $B$ to be less or equal $e_H$.
	Thus,  we arrive at
	$$\widetilde Z_H^n=\frac{Z_H^n-\E\[Z_H^n\]}{\sqrt{\mathrm{Var} [Z_H^n]}}=\frac{P_H}{\sqrt{\mathrm{Var} [Z_H^n]}}\sum_{\substack{B\subseteq [2^n-1]\\1\leq |B|\leq e_H}}  \prod_{b\in B}\(\widetilde X_b\sqrt{\frac{1-p_{|e_b|}}{p_{|e_b|}}}\) \sum_{\substack{A\subseteq[2^n-1]\\A\cap B = \emptyset}}\I _{A\cup B \sim H}$$
	as required.
	\end{proof}
	Using the notation from Theorem \ref{theo:hoeff}, we define for $H$ and $F$ without isolated vertices
		\begin{align*}
		I_F&:=\sum_{B\sim F}W_B,\ \ \ \ F\subseteq H,\\
		I_m&:=\sum_{|B|=m}W_B=\sum_{\substack{F\subseteq H\\e_F=m}}I_F,\ \ \ \ \ 1\leq m \leq e_H.
		\end{align*}
This lets us write
		\begin{align} \label{eq:Z=II}
		\widetilde{Z}_H^n=\sum_{F\subseteq H}I_F=\sum_{m=1}^{e_H}I_m.
		\end{align}
		\begin{proposition}\label{theo:VarI_m}
		Let $H$ be a hypergraph without isolated vertices. Then, for any $F\subseteq H$ with at least one edge  and without isolated vertices we have
		\begin{align*}
				\mathrm{Var}[I_F] \approx \frac{P_H^2}{\mathrm{Var} [Z_H^n]}
			n^{2v_H-v_F}\frac{Q_F}{P_F}.
		\end{align*}
		Furthermore, for any hypergraph $H$ it holds that  
		\begin{align}\label{eq:VarZ}
				\mathrm{Var}[Z_H^n] &\approx
				P_H^2n^{2v_H}\max_{F\subseteq H, e_F \geq1} n^{-v_F} \frac{Q_F }{P_F}={P_H^2n^{2v_H}}\({\min_{F\subseteq H, e_F \geq1} n^{v_F} P_F/{Q_F }}\)^{-1}.
		\end{align}
	\end{proposition}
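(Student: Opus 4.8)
The plan is to compute $\mathrm{Var}[I_F]$ directly from the explicit formula \eqref{eq:W_B} and then assemble the pieces into \eqref{eq:VarZ} using the orthogonality built into the Hoeffding decomposition. Throughout I treat $H$ (and hence $F$) without isolated vertices, deferring the general case to the reduction in Remark~\ref{rem:isolated}. The first step is to reduce $\mathrm{Var}[I_F]$ to a combinatorial sum. Since the $W_B$ are pairwise uncorrelated and centered (Theorem~\ref{theo:hoeff}), and non-isomorphic types give disjoint families of index sets, $I_F$ and $I_{F'}$ are uncorrelated for $F\not\simeq F'$ and $\mathrm{Var}[I_F]=\sum_{B\sim F}\E[W_B^2]$. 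Writing $c_B:=\sum_{A\cap B=\emptyset}\I_{A\cup B\sim H}$ for the number of copies of $H$ extending the copy $B$ of $F$, and using $\E[\widetilde X_b]=0$, $\E[\widetilde X_b^2]=1$ together with independence, each square factors as
$$\E[W_B^2]=\frac{P_H^2}{\mathrm{Var}[Z_H^n]}\Big(\prod_{b\in B}\frac{1-p_{|e_b|}}{p_{|e_b|}}\Big)c_B^2=\frac{P_H^2}{\mathrm{Var}[Z_H^n]}\,\frac{Q_F}{P_F}\,c_B^2,$$
the last equality because the product depends only on the multiset of edge sizes of $F$. Hence $\mathrm{Var}[I_F]=\tfrac{P_H^2}{\mathrm{Var}[Z_H^n]}\tfrac{Q_F}{P_F}\sum_{B\sim F}c_B^2$, and everything reduces to showing $\sum_{B\sim F}c_B^2\approx n^{2v_H-v_F}$.

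For the combinatorial estimate I would exploit the $S_n$-symmetry of $K_n^*$. Any two copies $B,B'$ of $F$ are related by a vertex bijection $V(B)\to V(B')$ that extends to a permutation of $[n]$, i.e.\ an automorphism of $K_n^*$; this permutation carries the extensions of $B$ bijectively onto those of $B'$, so $c_B$ is in fact independent of $B$. It then suffices to bound a single $c_B$ and to count the copies. Fixing an embedding $F\hookrightarrow H$, a copy of $H$ containing $B$ is obtained by placing the remaining $v_H-v_F$ vertices on distinct vertices of $[n]\setminus V(B)$ and wiring the missing edges as in $H$; freely choosing the new vertices gives $c_B\gtrsim n^{v_H-v_F}$, while any $H$-copy uses at most $v_H-v_F$ new vertices with a bounded number of edge-wirings, so $c_B\lesssim n^{v_H-v_F}$, whence $c_B\approx n^{v_H-v_F}$. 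Since the number of copies of $F$ is $N_F^n\approx n^{v_F}$, we obtain $\sum_{B\sim F}c_B^2=N_F^n\,c_B^2\approx n^{v_F}n^{2(v_H-v_F)}=n^{2v_H-v_F}$, which proves the first assertion.

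Finally, for \eqref{eq:VarZ} I would use that $\widetilde Z_H^n=\sum_F I_F$ with pairwise uncorrelated summands, where $F$ ranges exactly over the subhypergraphs of $H$ with $e_F\geq1$ and without isolated vertices (each index set $B$ is a set of edges, hence carries no isolated vertex). Taking variances and using $\mathrm{Var}[\widetilde Z_H^n]=1$,
$$1=\sum_{F\subseteq H:\,e_F\geq1}\mathrm{Var}[I_F]\approx\frac{P_H^2}{\mathrm{Var}[Z_H^n]}\sum_{F}n^{2v_H-v_F}\frac{Q_F}{P_F}\approx\frac{P_H^2}{\mathrm{Var}[Z_H^n]}\max_{F}n^{2v_H-v_F}\frac{Q_F}{P_F},$$
the last step holding because there is a bounded (depending only on $H$) number of nonnegative terms. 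Rearranging yields $\mathrm{Var}[Z_H^n]\approx P_H^2n^{2v_H}\max_F n^{-v_F}Q_F/P_F$; allowing $F$ with isolated vertices does not change the maximum, since those only enlarge $v_F$ while leaving $P_F/Q_F$ fixed, and the reciprocal form $(\min_F n^{v_F}P_F/Q_F)^{-1}$ is immediate. The case of $H$ with isolated vertices follows from Remark~\ref{rem:isolated}: removing the $i$ isolated vertices scales $Z_H^n$ by a factor $\approx n^{i}$ (so the variance by $\approx n^{2i}$) and leaves the minimizing $F$ unaffected.

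The main obstacle I anticipate is the uniform combinatorial estimate $c_B\approx n^{v_H-v_F}$, and in particular securing the lower bound with the correct growth order uniformly over all copies $B$ of $F$; the $S_n$-symmetry reduction, which makes $c_B$ literally constant in $B$, is the cleanest way to sidestep the bookkeeping over distinct embedding types of $F$ into $H$.
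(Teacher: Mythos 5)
Your proposal is correct and follows essentially the same route as the paper: it computes $\mathrm{Var}[I_F]$ from the explicit form of $W_B$ together with the counting estimates $\sum_{B\sim F}1\approx n^{v_F}$ and $\sum_{A\cap B=\emptyset}\I_{A\cup B\sim H}\approx n^{v_H-v_F}$, derives \eqref{eq:VarZ} from orthogonality and $\mathrm{Var}[\widetilde Z_H^n]=1$, and handles isolated vertices via the binomial-coefficient reduction $Z_H^n=\binom{n-v_{H'}}{k}Z_{H'}^n$. The only difference is that you spell out (via the $S_n$-symmetry argument making $c_B$ constant) the combinatorial counting step that the paper asserts without proof.
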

	\begin{proof} 
	From \eqref{eq:W_B} we have
	\begin{align*}
	\mathrm{Var}[I_F]=\sum_{B\sim F}\frac{P^2_H}{{\mathrm{Var} [Z_H^n]}}  \prod_{b\in B}\({\frac{1-p_{|e_b|}}{p_{|e_b|}}}\) \bigg(\sum_{\substack{A\subseteq[2^n-1]\\A\cap B = \emptyset}}\I _{A\cup B \sim H}\bigg)^2.
	\end{align*}
	Since $\sum_A\I _{A\cup B \sim H}\approx n^{v_H-v_F}$ when $B\sim F$,  and $\sum_{B\sim F}1\approx n^{v_F}$, we estimate
	\begin{align*}
	\mathrm{Var}[I_F]&\approx n^{v_F}\frac{P^2_H}{{\mathrm{Var} [Z_H^n]}} \frac{Q_F}{P_F} \(n^{v_H-v_F}\)^2=\frac{P_H^2}{\mathrm{Var} [Z_H^n]}
			n^{2v_H-v_F}\frac{Q_F}{P_F},
	\end{align*}
		as required. 
		
		Let us turn our attention to the latter part of the assertion. For hypergraphs without isolated vertices, it  follows from orthogonality of the variables $I_F$ and the equality
		$$\sum_{\substack{F\subseteq H\\e_F\geq1}}\mathrm{Var}[I_F]=\mathrm{Var}[\widetilde Z_H^n]=1.$$ 
		It remains to prove the assertion for $H$ with isolated vertices. Creating such a hypergraph $H$ by adding $k$ vertices to  $H'$ without isolated vertices, we simply have $Z_{H}^n={n-v_{H'}\choose k}Z_{H'}^n$ and hence
		\begin{align*}
		\Var[Z_{H}^n]&={n-v_{H'}\choose k}^2\Var[Z_{H'}^n]\approx n^{2(v_{H}-v_{H'})}P_{H'}^2\max_{F\subseteq H': e_F \geq1} n^{2v_{H'}-v_F} \frac{Q_F }{P_F}\\
		&\approx P_{H}^2\max_{F\subseteq H, e_F \geq1} n^{2v_{H}-v_F} \frac{Q_F }{P_F},
		\end{align*}
		where we used the fact that  the last maximum is realised by a subhypergraph $F$ without isolated vertices.
	\end{proof}
	
	\begin{remark}\label{rem:var}Due to inhomogeneity of the model $\rh(n, \p)$, the maximum in \eqref{eq:VarZ} is rather involved. One may understand it better by observing that if we add to a subhypergraph $F$ an edge whose probability is greater than $1/2$, then the expression $n^{-v_F} \frac{Q_F }{P_F}$ decreases. Thus, it is sufficient to consider subhypergraphs with minimal number of such edges, which leads to 
	\begin{align}
	\mathrm{Var}[Z_H^n]& \approx
				P_H^2n^{2v_H}\(\max\limits_{\substack{F\subseteq H,\, e_F \geq1\\p_{|e|}\leq1/2\text{ for }e\in E(F)}}  \frac{1}{n^{v_F}P_F}
				+\max_{e\in E(H):\, p_{|e|}>1/2} \frac{1-p_{|e|}}{n^{|e|}}\)\\& \approx
				P_H^2n^{2v_H}\(\max\limits_{\substack{F\subseteq H,\, e_F \geq1\\p_{|e|}\leq1/2\text{ for }e\in E(F)}}  \frac{1}{n^{v_F}P_F}
				+\max_{e\in E(H)} \frac{1-p_{|e|}}{n^{|e|}}\),
				\end{align}
				where in the last maximum one can include terms corresponding to $p_{|e|}\leq1/2$, since they are dominated by the terms from the previous maximum. 
	\end{remark}
	
Let us note that in \cite[proof of Theorem 3.2]{Dewar} one can find an  estimate of the variance of a somewhat different  form
$$
			\mathrm{Var}[Z_H^n] \approx P_H^2\max_{F\subseteq H, e_F \geq 1} n^{2v_H-v_F} \frac{1 - P_F }{P_F}.
$$
	Nevertheless, 	both of them are equivalent, even though $1-P_F\approx \max_{e\in E(F)}\{1-p_{|e|}\}$ is generally not comparable with $Q_F$. 
	
	We end this subsection with an observation that if the first sum in \eqref{eq:Z=II} is restricted to subhypergraphs of some $H'\subseteq H$,  then we obtain  the  variable $\widetilde Z^n_{H'}$ multiplied by some constant.
		\begin{proposition}\label{prop:subHoeff}
		For the decomposition \eqref{eq:Z=II} and for any $H'\subseteq H$ without isolated vertices we have $$
		\sum_{F\subseteq H'} I_F=\alpha_n(H', H) \widetilde Z_{H'}^n
		$$
		with 
		\begin{align*}
		\alpha_n(H',H)= C_{H'}{n- v_{H'}\choose v_H-v_{H'}}\frac{P_{H}\sqrt{\mathrm{Var} [Z_{H'}^n]}}{P_{H'}\sqrt{\mathrm{Var} [Z_{H}^n]}},
		\end{align*}
		where $C_{H'}$ denotes the number of hypergraphs isomorphic to $H$ that might be built on $H'$ and some given $v_H-v_{H'}$ {other} vertices. Furthermore, the following estimate holds 
		\begin{align}\label{eq:estalpha}
		\alpha_n(H',H)\approx \sqrt{\frac{\max\limits_{F\subseteq H', e_F \geq1} n^{-v_F} {Q_F }/{P_F}}{\max\limits_{F\subseteq H, e_F \geq1} n^{-v_F} {Q_F }/{P_F}}}.
		\end{align}
		\end{proposition}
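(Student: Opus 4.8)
The plan is to compare the two Hoeffding expansions monomial by monomial. Writing the left‑hand side through the explicit formula \eqref{eq:W_B}, grouping the components $W_B$ of $\widetilde Z_H^n$ by the isomorphism type of $B$ and keeping only the types $F\subseteq H'$ selects precisely those index sets $B\subseteq[2^n-1]$ whose edge‑type $\{e_b:b\in B\}$ embeds into $H'$; thus $\sum_{F\subseteq H'}I_F=\sum_{[B]\subseteq H'}W_B$. Applying Theorem \ref{theo:hoeff} to $H'$ in place of $H$ yields an expansion of $\widetilde Z_{H'}^n$ supported on the \emph{same} family of sets $B$, since $\sum_{A\cap B=\emptyset}\I_{A\cup B\sim H'}\neq0$ exactly when the type of $B$ embeds into $H'$. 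Because the monomials $\prod_{b\in B}\widetilde X_b$ are orthogonal over distinct $B$ (each $\widetilde X_b$ is centred and the $X_b$ independent), the asserted identity is equivalent to matching, for every such $B$, the scalar coefficient of $\prod_{b\in B}\widetilde X_b$. I would also record at the outset that, by transitivity of $\mathrm{Aut}(K_n^*)=S_n$ on copies of any fixed $F$, both extension counts $\sum_{A\cap B=\emptyset}\I_{A\cup B\sim H}$ and $\sum_{A\cap B=\emptyset}\I_{A\cup B\sim H'}$ depend on $B$ only through its type $F$.

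With this reduction the common factors $\prod_{b\in B}\sqrt{(1-p_{|e_b|})/p_{|e_b|}}$ cancel, and proving the displayed equality comes down to the combinatorial proportionality
$$\sum_{\substack{A\subseteq[2^n-1]\\ A\cap B=\emptyset}}\I_{A\cup B\sim H}=\kappa\sum_{\substack{A\subseteq[2^n-1]\\ A\cap B=\emptyset}}\I_{A\cup B\sim H'},\qquad B\sim F,\ F\subseteq H',$$
with a constant $\kappa$ independent of the type $F\subseteq H'$. Granting this, its value is pinned down by the single case $B\sim H'$: there the $H'$‑count equals $1$, while the $H$‑count is the number of copies of $H$ built on a fixed copy of $H'$ together with $v_H-v_{H'}$ further vertices, i.e. $C_{H'}\binom{n-v_{H'}}{v_H-v_{H'}}$. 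Reinstating the prefactors $P_H/\sqrt{\Var[Z_H^n]}$ and $P_{H'}/\sqrt{\Var[Z_{H'}^n]}$ from \eqref{eq:W_B} then reproduces exactly the stated $\alpha_n(H',H)$.

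I expect the genuine obstacle to be the $F$‑independence of $\kappa$: that the number of copies of $H$ extending a fixed copy of $F$ is a \emph{fixed} multiple of the number of copies of $H'$ extending that same $F$, uniformly over all $F\subseteq H'$. The natural attempt is to factor every extension of $F$ to $H$ through an intermediate copy of $H'\supseteq F$, which is clean only when each copy of $H\supseteq F$ contains exactly one copy of $H'\supseteq F$; in general this multiplicity depends on how $F$ sits inside $H$, so $\kappa$ need not be constant (an edge in the middle versus at the end of a $P_4$ lies in a different number of its sub‑$P_3$'s), and some care is required here. The robust substitute — which is all that \eqref{eq:estalpha} needs — is to compare \emph{variances} instead of the variables themselves: by orthogonality of the $I_F$ and Proposition \ref{theo:VarI_m}, $\Var\big[\sum_{F\subseteq H'}I_F\big]\approx \frac{P_H^2 n^{2v_H}}{\Var[Z_H^n]}\max_{F\subseteq H',\,e_F\geq1} n^{-v_F}Q_F/P_F$, which already fixes $\alpha_n$ up to constants.

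Finally, to obtain \eqref{eq:estalpha} I would insert the variance asymptotics \eqref{eq:VarZ} for both $H$ and $H'$ into the definition of $\alpha_n(H',H)$, use $\binom{n-v_{H'}}{v_H-v_{H'}}\approx n^{v_H-v_{H'}}$, and check that the factor $P_H/P_{H'}$ and the powers of $n$ cancel against $\sqrt{\Var[Z_{H'}^n]/\Var[Z_H^n]}$. What survives is precisely the square root of the ratio of the two maxima $\max_{F\subseteq H',\,e_F\geq1}n^{-v_F}Q_F/P_F$ and $\max_{F\subseteq H,\,e_F\geq1}n^{-v_F}Q_F/P_F$, as claimed.
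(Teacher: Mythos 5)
Your overall strategy --- matching, for each fixed $B$, the coefficient of $\prod_{b\in B}\widetilde X_b$ via orthogonality, then trying to factor every extension of $B$ to a copy of $H$ through an intermediate copy of $H'$, and finally reading \eqref{eq:estalpha} off the formula for $\alpha_n(H',H)$ and \eqref{eq:VarZ} --- is exactly the paper's proof. But the obstacle you flagged is not a point where ``some care is required'': it is fatal, and your $P_4$/$P_3$ instinct is a genuine counterexample to the proposition as stated. Take the graph case, $H$ the path with edges $\{1,2\},\{2,3\},\{3,4\}$ and $H'\subseteq H$ the sub-path with edges $\{1,2\},\{2,3\}$. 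Writing $N_H(B)=\sum_{A\cap B=\emptyset}\I_{A\cup B\sim H}$ and $N_{H'}(B)=\sum_{A\cap B=\emptyset}\I_{A\cup B\sim H'}$ for the two extension counts in \eqref{eq:W_B}, one gets for a single edge $B$ that $N_H(B)=3(n-2)(n-3)$ and $N_{H'}(B)=2(n-2)$, ratio $\tfrac{3}{2}(n-3)$, while for $B\sim H'$ one gets $N_H(B)=2(n-3)$ and $N_{H'}(B)=1$, ratio $2(n-3)$. Since the monomials $\prod_{b\in B}\widetilde X_b$ are orthogonal, $\sum_{F\subseteq H'}I_F$ is a scalar multiple of $\widetilde Z_{H'}^n$ \emph{only if} $N_H(B)/N_{H'}(B)$ is the same for all admissible $B$; hence no constant $\alpha_n(H',H)$ whatsoever works here, and the proposition is false for general $H'\subseteq H$. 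The paper's own proof breaks precisely where you predicted: its first displayed equality counts pairs (copy of $H'$ containing $B$, extension to a copy of $H$), which counts each copy of $H$ containing $B$ with multiplicity equal to the number of copies of $H'$ inside it containing $B$ --- tacitly assumed to be $1$, whereas it equals $2$ when $B$ is the middle edge of the four-vertex path.

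What is true, and is all the paper ever uses, is the proposition under the extra hypothesis that every admissible $B$ inside a copy $\hat H$ of $H$ lies in \emph{exactly one} copy of $H'$ contained in $\hat H$; then your factorization runs verbatim and gives the stated $\alpha_n(H',H)$. This hypothesis holds when $E(H')=\{e\in E(H):|e|\in S\}$ for a set $S$ of edge sizes (isomorphisms preserve sizes, so the edges of $\hat H$ with sizes in $S$ form the unique copy of $H'$, and it contains every admissible $B$), which covers $H'=H^{\leq 1-\varepsilon}_n$ in Theorem \ref{theo:mainbounds} and $\bar H^{<1},\bar H^{1}$ in Theorem \ref{thm:conditions} because $\p$ gives equal probability to edges of equal size; and it holds trivially for $H'=\{e\}$ a single edge, where the only admissible type is a single edge of size $|e|$, itself a copy of $H'$. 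Finally, note that your ``robust substitute'' does not repair the general statement: orthogonality and Proposition \ref{theo:VarI_m} give $\Var\big[\sum_{F\subseteq H'}I_F\big]\approx\alpha_n(H',H)^2$, i.e.\ \eqref{eq:estalpha}, but not the collinearity $\sum_{F\subseteq H'}I_F=\alpha_n(H',H)\widetilde Z_{H'}^n$, and it is this exact identity --- not its variance shadow --- that the later proofs rely on when they transfer normal-approximation bounds and independence from $\widetilde Z_{H'}^n$.
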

		\begin{proof}

		Let $F\subseteq H'$ be a  subhypergraph of $H'$ without isolated vertices. Then, for any $[2^n-1]\supseteq B\sim F$  we have
		\begin{align*}
		\sum_{\substack{A\subseteq[2^n-1]\\A\cap B = \emptyset}}\I _{A\cup B \sim H}&=\sum_{\substack{A_1\subseteq[2^n-1]\\A_1\cap B = \emptyset}}\I_{A_1\cup B\sim H'}\sum_{\substack{A_2: A_1\cap A_2=\emptyset,\\(A_1\cup A_2)\cap B =\emptyset}}\I _{\(A_2\cup A_1\cup B\) \sim H}\\
		&=C_{H'}{n- v_{H'}\choose v_H-v_{H'}}\sum_{\substack{A_1\subseteq[2^n-1]\\A_1\cap B = \emptyset}}\I_{A_1\cup B\sim H'}.
		\end{align*}
		Thus, by \eqref{eq:W_B} we get
		\begin{align*}
		\sum_{F\subseteq H'}I_F
		&=\sum_{F\subseteq H'}\sum_{B\sim F}\frac{P_H}{\sqrt{\mathrm{Var} [Z_H^n]}}  \prod_{b\in B}\(\widetilde X_b\sqrt{\frac{1-p_{|e_b|}}{p_{|e_b|}}}\) \sum_{\substack{A\subseteq[2^n-1]\\A\cap B = \emptyset}}\I _{A\cup B \sim H}\\
		&=\frac{P_{H}\sqrt{\mathrm{Var} [Z_{H'}^n]}}{P_{H'}\sqrt{\mathrm{Var} [Z_{H}^n]}}\sum_{F\subseteq H'}\sum_{B\sim F}\frac{P_{H'}}{\sqrt{\mathrm{Var} [Z_{H'}^n]}}  \prod_{b\in B}\(\widetilde X_b\sqrt{\frac{1-p_{|e_b|}}{p_{|e_b|}}}\) C_{H'}{n- v_{H'}\choose v_H-v_{H'}}\sum_{\substack{A\subseteq[2^n-1]\\A\cap B = \emptyset}}\I _{A\cup B \sim H'}\\
		&=C_{H'}{n- v_{H'}\choose v_H-v_{H'}}\frac{P_{H}\sqrt{\mathrm{Var} [Z_{H'}^n]}}{P_{H'}\sqrt{\mathrm{Var} [Z_{H}^n]}}\widetilde Z_{H'}^n,
		\end{align*}
		as required. Next, by \eqref{eq:VarZ}, we estimate
		\begin{align*}
		\alpha_n(H,H')&\approx n^{v_H-v_{H'}}\frac{P_H}{P_{H'}}\sqrt{\frac{P_{H'}^2\max\limits_{F\subseteq H', e_F \geq1} n^{2v_{H'}-v_F} {Q_F }/{P_F}}{P_H^2\max\limits_{F\subseteq H, e_F \geq1} n^{2v_H-v_F} {Q_F }/{P_F}}}
		=\sqrt{\frac{\max\limits_{F\subseteq H', e_F \geq1} n^{-v_F} {Q_F }/{P_F}}{\max\limits_{F\subseteq H, e_F \geq1} n^{-v_F} {Q_F }/{P_F}}},
		\end{align*}
which ends the proof.
		\end{proof}

\subsection{Notation }
Below,  we gather the most common symbols and notations appearing in the article. The other ones  occur rather locally and are accompanied with their definitions.   
\begin{itemize}
\item$[n]=\{1, 2, \ldots, n\}$.
\item$K^*_n=([n], 2^{[n]}\backslash \{\emptyset\})$ - a complete hypergraph on  $[n]$.
\item $V(H)$ - set of vertices of $H$.
\item $E(H)$ - set of edges of $H$.
\item $v_H$ - number of vertices of $H$.
\item $e_H$ - number of edges of $H$.
\item $|e|$ - size of the edge $e$.
\item$N_H^n=|\{H'\subseteq K^*_n:H' \simeq H\}|$  - number of isomorphic copies of $H$ in $K_n^*$.
\item $\I_{H'}=\I_{\{H'\subseteq \rh(n,  \p )\}}$ for some $H'\subseteq K^*_n$.
\item $Y_{H'}=\I_{H'}-\E[\I_{H'}]$ for some $H'\subseteq K^*_n$.
\item $Z_H^n=\sum_{\substack{H'\subseteq K^*_n \\ H' \simeq H}} \I_{H'}$ - number of isomorphic copies of $H$ in $\rh(n,   \p  )$.
\item $\widetilde X=(X-\E[X])/\sqrt{\Var[X]}$ - standardisation of a random variable $X$.
\item $P_H= \prod_{e\in E(H)}p_{|e|}$.
\item $Q_H= \prod_{e\in E(H)}(1-p_{|e|})$.
\item $f_n\lesssim g_n$ - for two functional  sequences: $\exists_{C>0, n_0\in\N}\forall _{n\geq n_0}\,0\leq f_n\leq Cg_n$. \\[5pt]
 \phantom{$f_n\lesssim g_n$ -\ }The constants  $C$ and $n_0$ may depend on $H$. If they {also} depend on 
 another   \mbox{\phantom{$f_n\lesssim g_n$ -\ }parameter} $a$, we write $\stackrel{a}{\lesssim}$.

\item$f_n \approx g_n \Leftrightarrow f_n\lesssim g_n\lesssim f_n$. Similarly $f_n \stackrel{a}\approx g_n\Leftrightarrow f_n\stackrel{a}\lesssim g_n\stackrel{a}\lesssim f_n$.

\item $F\subseteq H \Leftrightarrow V(F)\subseteq V(H)$ and $E(F)\subseteq E(H)$.
\item $F\simeq H \Leftrightarrow$ $F$ is isomorphic to $H$.
\item $A\sim H$ - for a set $A\subseteq [2^n-1]$ and a hypergraph $H\subseteq K_n^*$, when edges numbered by \mbox{\phantom{$A\sim H$ -\ }all elements} from $A$ create a hypergraph isomorphic to $H$.
\item $\text{Lip}(h)$  the   Lipschitz constant of a function $h:\R\rightarrow\R$.
\item$\mathcal N$ - standard normal distribution or a random variable with such a distribution.
\item $d_W$ - Wasserstein distance, see \eqref{eq:d_W}.
\item $d_K$ - Kolmogorov distance, see \eqref{eq:d_K}.
\item $d_{W/K}=\max\{d_W,\,d_K\}$.
\item $W_B$ - a term of the Hoeffding decomposition of $\widetilde Z_H^n$, see Theorem \ref{theo:hoeff}.
\item edgewise separable (e.sep.), edgewise inseparable (e.ins.) - see Definition \ref{def:sep}.
\end{itemize}

\section{Main results}

\subsection{General case}
A typical starting point in research  on properties of random graphs is the threshold function. In our setting, the edge existence probability is a vector, and therefore we provide a threshold-like result in the following, a bit implicit,  but very simple manner.
\begin{proposition}\label{prop:threshold} For a fixed hypergraph $H$ we have
\begin{align}
			\mathbb{P}(Z_H^n > 0) \overset{n\rightarrow \infty}{\longrightarrow} \left\{
			\begin{array}{lll}
				1, & \text{when }&  \min\limits_{F\subseteq H:\,e_F\geq1}\{n^{v_F}\hspace{-5pt} \prod\limits _{e\in E(F)}p_{|e|}\}\rightarrow \infty,  \\[10pt]
				0, & \text{when} &\min\limits_{F\subseteq H:\,e_F\geq1}\{n^{v_F}\hspace{-5pt} \prod\limits _{e\in E(F)}p_{|e|}\}\rightarrow 0.
			\end{array}
			\right.
		\end{align}
\end{proposition}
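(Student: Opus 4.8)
The plan is to prove the two implications separately by the classical first and second moment methods, taking the moment estimates \eqref{eq:Eestgen} and \eqref{eq:Vestgen} as given. Both implications reduce, after a short manipulation, to the single quantity $\min_{F\subseteq H:\,e_F\geq1} n^{v_F}P_F$ appearing in the statement, so the whole argument is essentially a translation of the graph-case threshold proof into the present notation.

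For the $0$-statement (the regime $\min_F n^{v_F}P_F\rightarrow0$) I would argue by the first moment method. The key structural observation is that every copy of $H$ contains a copy of each subhypergraph $F\subseteq H$, so all edges of that copy of $F$ are present whenever the copy of $H$ is; hence $\{Z_H^n>0\}\subseteq\{Z_F^n>0\}$ for every $F\subseteq H$ with $e_F\geq1$. Letting $F_n\subseteq H$ realize the minimum for each $n$, Markov's inequality applied to the nonnegative integer-valued variable $Z_{F_n}^n$ together with \eqref{eq:Eestgen} gives
$$\mathbb{P}(Z_H^n>0)\leq\mathbb{P}(Z_{F_n}^n>0)\leq\mathbb{E}[Z_{F_n}^n]\lesssim n^{v_{F_n}}P_{F_n}=\min_{F\subseteq H:\,e_F\geq1}n^{v_F}P_F\rightarrow0.$$

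For the $1$-statement (the regime $\min_F n^{v_F}P_F\rightarrow\infty$) I would use the second moment method. By Chebyshev's inequality,
$$\mathbb{P}(Z_H^n=0)\leq\mathbb{P}\big(|Z_H^n-\mathbb{E}[Z_H^n]|\geq\mathbb{E}[Z_H^n]\big)\leq\frac{\Var[Z_H^n]}{\mathbb{E}[Z_H^n]^2}.$$
Inserting \eqref{eq:Eestgen} and \eqref{eq:Vestgen} collapses the right-hand side to
$$\frac{\Var[Z_H^n]}{\mathbb{E}[Z_H^n]^2}\approx\max_{F\subseteq H,\,e_F\geq1}n^{-v_F}\frac{Q_F}{P_F},$$
and since $Q_F=\prod_{e\in E(F)}(1-p_{|e|})\leq1$ this is at most $\max_F (n^{v_F}P_F)^{-1}=\big(\min_F n^{v_F}P_F\big)^{-1}\rightarrow0$, whence $\mathbb{P}(Z_H^n>0)\rightarrow1$.

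I do not expect a genuine obstacle here, as both ingredients are already available in the excerpt. The one point deserving care is that the minimizing $F_n$ may depend on $n$: since $H$ has only finitely many isomorphism types of subhypergraphs, the implicit constants in \eqref{eq:Eestgen} and \eqref{eq:Vestgen} can be taken uniform over all of them, which is exactly what legitimizes passing to the $n$-dependent minimizer in the displays above.
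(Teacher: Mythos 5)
Your proposal is correct and takes essentially the same approach as the paper: the $0$-statement is the identical Markov-inequality argument over subhypergraphs with at least one edge (the paper displays exactly the chain $\mathbb{P}(Z_H^n>0)\leq\min_F\mathbb{P}(Z_F^n>0)\leq\min_F\mathbb{E}[Z_F^n]\approx\min_F n^{v_F}P_F$), and the $1$-statement is the standard second-moment argument, which the paper does not write out but inherits unchanged from Theorem 3.2 of \cite{Dewar}. Your explicit Chebyshev computation via \eqref{eq:Eestgen} and \eqref{eq:Vestgen}, together with the bound $Q_F\leq 1$, simply makes self-contained what the paper handles by citation.
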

The above proposition is a slightly  improved version of Theorem 3.2 in \cite{Dewar}.  The first condition is unchanged and the latter one is obtained by Markov's  inequality applied as follows
\begin{align}
\P\(Z_H^n>0\)\leq\min\limits_{F\subseteq H:\,e_F\geq1}\P\(X_F>0\){\leq} \min\limits_{F\subseteq H:\,e_F\geq1}\E\[X_F\]\approx \min\limits_{F\subseteq H:\,e_F\geq1}\{n^{v_F}\hspace{-5pt} \prod\limits _{e\in E(F)}p_{|e|}\},
\end{align}
where  minima were simply added in comparison to the original proof in \cite{Dewar}.

Next, we turn our attention to the conditions of the asymptotic normality. Theorem \ref{thm:conditions}  generalizes \eqref{eq:ERns} as well as the   result for uniform hypergraphs from \cite{dJhyper}.
	\begin{theorem}\label{thm:conditions}
		Let $H$  be any hypergraph. 
	Then $\widetilde{Z}_H^n \overset{d}{\rightarrow} \mathcal{N}$
		holds if and only if

$$ \min\limits _{F\subseteq H:\, e_F\geq1}P_F\,n^{v_F}\rightarrow \infty$$
		 and 
		\begin{equation}\label{long_necessary}
		\forall_{e\in E(H)}\,\({n^{|e|}(1-p_{|e|})} + \frac{\max_{F\subseteq H:\,e_F\geq1} n^{-v_F}Q_F/P_F} {n^{-{|e|}} (1-p_{{|e|}})}\){\rightarrow}\, \infty.
		\end{equation}
	\end{theorem}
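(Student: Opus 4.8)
The statement is a characterisation, so I would prove the two implications separately, using throughout the Hoeffding decomposition \eqref{eq:Z=II}, the variance estimates of Proposition \ref{theo:VarI_m}, and the orthogonality of the $I_F$ coming from Theorem \ref{theo:hoeff}. \textbf{Sufficiency.} The plan is to derive convergence in distribution from the quantitative bounds of Section 4: assuming both conditions, I would show that the upper bound on $d_{W/K}(\widetilde Z_H^n,\mathcal N)$ obtained there tends to $0$, which gives $\widetilde Z_H^n\overset{d}{\to}\mathcal N$. Concretely, the first condition $\min_{F}n^{v_F}P_F\to\infty$ controls, via Proposition \ref{prop:threshold} and \eqref{eq:Eestgen}, the density of copies and hence the terms of the bound carrying negative powers of the counts, while the per-edge condition \eqref{long_necessary} controls precisely the terms generated by the most influential Hoeffding components $I_F$ through the maximum in \eqref{eq:VarZ}. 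Since the rate bound is assembled from exactly these quantities, each of its summands vanishes under the two hypotheses.

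\textbf{Necessity.} Here I would argue by contraposition, exhibiting non-Gaussian behaviour whenever a condition fails. The crucial structural fact is that the first-order piece $I_1=\sum_{|B|=1}W_B$ is a genuine sum of \emph{independent} centred variables, one for each edge, so the classical Feller--Lévy/Lindeberg theory applies to it directly; the higher-order pieces are then handled by comparison through orthogonality and Proposition \ref{theo:VarI_m}. If the first condition fails, then $n^{v_{F_0}}P_{F_0}$ stays bounded along a subsequence for some $F_0\subseteq H$; a bounded expected number of copies of $F_0$ keeps $\P(Z_H^n=0)$ bounded away from $0$ (e.g.\ by Poisson approximation for the copies of $F_0$), and on $\{Z_H^n=0\}$ the variable $\widetilde Z_H^n$ equals the single value $-\E[Z_H^n]/\sqrt{\Var[Z_H^n]}\approx-\big(\min_{F}n^{v_F}P_F/Q_F\big)^{1/2}$, so as long as this location stays bounded a persistent atom survives, contradicting the continuity of the normal law. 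If instead \eqref{long_necessary} fails for some edge $e$, then its first summand $n^{|e|}(1-p_{|e|})$ is bounded, whence $p_{|e|}\to1$ and only finitely many size-$|e|$ edges are absent, a Poisson-type regime; at the same time a short computation gives $\Var[I_{\{e\}}]\approx 1/T$, where $T$ is the second summand of \eqref{long_necessary}, so when $T$ too is bounded the independent direction indexed by the size-$|e|$ edges keeps a non-negligible share of the variance. Thus a single edge's absence contributes a non-vanishing amount of variance, the Lindeberg negligibility condition fails, and Feller--Lévy rules out a Gaussian limit.

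\textbf{Main obstacle.} The principal difficulty is the inhomogeneity of $\rh(n,\p)$: as already noted, the stronger Lyapunov condition genuinely fails, so the usual third-moment shortcut is unavailable and the Lindeberg condition must be verified directly, in its simplest truncated form, with the truncation scale adapted to $|e|$. A second delicate point is disentangling the two failure modes: when the first condition fails together with some $p_{|e|}\to1$, the atom above may escape to $-\infty$ and give no contradiction by itself, and one must instead recover the Poisson-type non-normality of the previous paragraph. This is exactly why both requirements appear in the statement, and why the bookkeeping of which subhypergraph realises the maximum in \eqref{eq:VarZ}, together with the reduction of the dependent higher-order terms $I_m$ to the tractable first-order sum, is where the real care is needed.
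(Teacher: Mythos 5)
Your sufficiency argument coincides with the paper's (it is exactly an appeal to Theorem \ref{theo:mainbounds}), and your contrapositive treatment of the first condition is a sound variant of the paper's direct argument: the paper uses $Z_H^n\geq 0$ to force $\E[Z_H^n]/\sqrt{\Var[Z_H^n]}\to\infty$ under normality and then shows algebraically that $\min_F n^{v_F}P_F\gtrsim n\wedge\min_F n^{v_F}P_F/Q_F$, while you produce an atom of mass bounded below at the minimal value of $\widetilde Z_H^n$. Note, incidentally, that the case you flag as troublesome --- the atom location drifting to $-\infty$ --- is harmless and needs no ``Poisson-type'' detour: an atom of mass $\geq\delta$ at locations tending to $-\infty$ gives $\liminf_n\P(\widetilde Z_H^n\leq t)\geq\delta$ for every fixed $t$, which already contradicts $\P(\widetilde Z_H^n\leq t)\to\Phi(t)$.

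The genuine gap is in the necessity of \eqref{long_necessary}. Your computation that $\Var[I_{\{e\}}]\approx 1/T$ (with $T$ the second summand of \eqref{long_necessary}) and that the Lindeberg sum picks up a non-vanishing contribution from the absent size-$|e|$ edges is correct, and it is precisely the content of the paper's Proposition \ref{theo:lind_cond}. But the Feller--L\'evy/Lindeberg converse applies only to sums of \emph{independent} random variables, and $\widetilde Z_H^n=I_1+\sum_{m\geq2}I_m$ is not such a sum: the chaoses $I_m$, $m\geq2$, are uncorrelated with $I_1$ but not independent of it, and in the regime where \eqref{long_necessary} fails they need not be negligible (only those built entirely from edges whose probabilities tend to $1$ are; chaoses coming from the sparse part of $H$ can retain variance of constant order). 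Orthogonality cannot rule out that a dependent, uncorrelated remainder restores asymptotic normality of the sum --- uncorrelated non-Gaussian decompositions of a Gaussian variable exist, e.g. $\mathcal N=\tfrac15\mathcal N^3+(\mathcal N-\tfrac15\mathcal N^3)$ --- so non-normality of $I_1$, or of $I_{\{e\}}$, does not by itself transfer to $\widetilde Z_H^n$. This is exactly where the paper invests its effort: it passes to a subsequence along which every $p_{|e|}$ converges, splits $H=\bar H^{<1}\cup\bar H^{1}$ according to whether the limit is $<1$ or $=1$ (these blocks are genuinely independent, since edges of equal size share a probability and hence a block), proves that all mixed chaoses have vanishing variance, applies the already-proven sufficiency to get $\widetilde Z^n_{\bar H^{<1}}\to\mathcal N$, and then uses independence and characteristic functions to conclude $\widetilde Z^n_{\bar H^{1}}\to\mathcal N$ whenever the limiting coefficient $\alpha^1$ is nonzero, the case $\alpha^1=0$ being settled by \eqref{eq:estalpha}. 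Only after this reduction, on $\bar H^1$ where all probabilities exceed $1/2$ and the higher chaoses are provably $O(1/n)$ in variance, does the Lindeberg machinery you invoke become applicable. Without this block decomposition, the subsequence extraction, and the use of sufficiency inside the necessity proof, your contrapositive for \eqref{long_necessary} does not go through.
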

The last quotient of \eqref{long_necessary} looks a bit mysterious, so let us make some  comments on it. First of all, it matters only for $p_{|e|}$ close to $1$, since else $n^{|e|}(1-p_{|e|})\rightarrow\infty$  and the whole expression in parentheses tends to $\infty$ anyway. Thus, for  $p_{|e|}>1/2$,  we may estimate the said quotient by $\Var[Z_H^n]/\Var[X_{\{e\}}]$  (see Proposition \ref{theo:VarI_m}), where $\{e\}$ stands for the hypergraph consisting of the edge $e$ and its vertices only. Additionally, since $\E[Z^n_F]{\approx}P_Fn^{v_F} $ for $F\subseteq H$, we may rewrite the conditions from Theorem \ref{thm:conditions} probabilistically  as follows
$$ \min\limits _{F\subseteq H:\, e_F\geq1}\E[Z_F^n]\rightarrow \infty
		 \ \ \text{ and }\ \  
		\forall_{e\in E(H)}\[{n\choose |e|}-\E[Z_{\{e\}}^n] \] +\frac{\Var[Z_H^n]}{\Var[Z_{\{e\}}^n]}\ {\longrightarrow} \ \infty.
$$
This gives us some ideas of interpretation.	By Proposition \ref{prop:threshold}, the first condition ensures at least one copy of $H$	with high probability. The limit $\[{n\choose |e|}-\E[Z_{\{e\}}^n] \]\rightarrow \infty$ means that the number of edges of size $|e|$ that do not exist tends to infinity, so that the subhypergraph of $\rh(n, \p)$ consisting of the  edges of that size is not to close to the deterministic complete $|e|$-uniform hypergraph. However, even in the case it is too close, we can hope for randomness coming from other edges, which may happen if $\Var[Z_{\{e\}}^n]$ has asymptotically no impact on $\Var[Z_H^n]$. 

Next, we establish bounds on the Wasserstein distance
\begin{align}\label{eq:d_W}
d_{W}(\widetilde{Z}_H^n, \mathcal{N}):=\sup_{\text{Lip}(h)\leq1}\big|\E[h(\widetilde{Z}_H^n)-h(\mathcal N)]\big|,
\end{align}
where $\text{Lip}(h)$ denotes the   Lipschitz constant of a function $h:\R\rightarrow\R$, and the Kolmogorov distance
\begin{align}\label{eq:d_K}
d_{K}(\widetilde{Z}_H^n, \mathcal{N}):=\sup_{t\in \R}\big|\P\(\widetilde{Z}_H^n\leq t\)-\P(\mathcal N\leq t)]\big|
\end{align}
between $\widetilde{Z}_H^n$ and a random variable  $\mathcal N$ with the standard normal distribution. In the sequel, we additionally use the notation $d_{W/K}:=\max\{d_W, d_K\}$. Note also that the distance $d_1$ defined in \eqref{eq:d_1} is smaller than the Wasserstein distance, so there is no need to consider it separately.
\begin{theorem}\label{theo:mainbounds}
		For any hypergraph  $H$  we have 
		 $$d_{W}(\widetilde{Z}_H^n, \mathcal{N})\lesssim\frac1{\({{ \min\limits_{\substack{F\subseteq H:\,e_F\geq1}}{P_F}\,n^{v_{F}}}}\)^{1/4}}+\sum_{e\in E(H)}\(\frac{n^{-|e|} (1-p_{|e|})}{\max\limits_{F\subseteq H:\,e_F\geq1} n^{-v_F}Q_F/P_F}\wedge \frac1{(1-p_{|e|})n^{|e|}}\)^{1/2}$$
		and
		 	 $$d_{K}(\widetilde{Z}_H^n, \mathcal{N})\lesssim\frac1{\({{ \min\limits_{\substack{F\subseteq H:\,e_F\geq1}}{P_F}\,n^{v_{F}}}}\)^{1/5}}+\sum_{e\in E(H)}\(\frac{n^{-|e|} (1-p_{|e|})}{\max\limits_{F\subseteq H:\,e_F\geq1} n^{-v_F}Q_F/P_F}\)^{1/3}\hspace{-8pt}\wedge \(\frac1{(1-p_{|e|})n^{|e|}}\)^{1/2}\hspace{-2pt}.$$
		 	 Additionally, if $p_{|e|}<c$ for some $c\in (0,1)$ and all $e\in E(H)$, we have
		 	 \begin{align}\label{aux8}
		 	 d_{W/K}(\widetilde{Z}_H^n, \mathcal{N})\lesssim\frac1{\({{ \min\limits_{\substack{F\subseteq H:\,e_F\geq1}}{P_F}\,n^{v_{F}}}}\)^{1/2}}.
\end{align}
	\end{theorem}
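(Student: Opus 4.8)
The plan is to feed the Hoeffding decomposition \eqref{eq:Z=II} into the abstract normal-approximation inequalities for generalized $U$-statistics developed in \cite{PSejp2, PSbej, PSstoch}, and then to evaluate every functional appearing on their right-hand side by means of Proposition \ref{theo:VarI_m}. Those inequalities bound $d_W(\widetilde Z_H^n,\mathcal N)$ and $d_K(\widetilde Z_H^n,\mathcal N)$ by combinations of computable quantities attached to the kernels $W_B$ of \eqref{eq:W_B}: the variances $\Var[I_F]$, the maximal single-coordinate influence $\max_b\Var[W_{\{b\}}]$, and contraction-type terms (which, as exploited in Section 5, are tied to the excess kurtosis $\E[(\widetilde Z_H^n)^4]-3$). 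A structural point to keep in mind is that, unlike in the sparse $\mathbb G(n,p)$ regime, the dominant chaos need \emph{not} be the linear part $I_1$: by Proposition \ref{theo:VarI_m} the term carrying most of the variance is $I_{F^\ast}$ with $F^\ast$ realising the maximum in \eqref{eq:VarZ}, and its order $e_{F^\ast}$ may be as large as $e_H$. The estimates therefore have to be run uniformly over all $F\subseteq H$ rather than around a single leading term.

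First I would bound the contraction part, which is responsible for the first summand. Using Proposition \ref{theo:VarI_m} to write $\Var[I_F]\approx (n^{-v_F}Q_F/P_F)/\max_{F'}(n^{-v_{F'}}Q_{F'}/P_{F'})$ together with the combinatorial estimates $\sum_A\I_{A\cup B\sim H}\approx n^{v_H-v_F}$ and $\sum_{B\sim F}1\approx n^{v_F}$ already used in its proof, every contraction norm reduces to a ratio of such counts. After cancellation these collapse onto the single scale $\min_{F\subseteq H:\,e_F\ge1}P_Fn^{v_F}=\min_F\E[Z_F^n]$. Since this quantity enters the Wasserstein bound through its square root and the Kolmogorov bound through a smaller power, one recovers exactly the summands $(\min_FP_Fn^{v_F})^{-1/4}$ and $(\min_FP_Fn^{v_F})^{-1/5}$. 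The book-keeping here, with the singular weights $\E|\widetilde X_b|^3\approx(p_{|e_b|}(1-p_{|e_b|}))^{-1/2}$, is routine once the variance orders are known.

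Next I would treat the edges $e$ with $p_{|e|}$ close to $1$, which produce the second, edgewise summand and which the contraction part does not control (as stressed in the Introduction, Berry--Esseen alone fails to recover the normality conditions when all probabilities approach $1$). For each such edge there are two competing estimates and the wedge keeps the smaller. On one hand, the fluctuation carried by the size-$|e|$ edges can be charged to the single Hoeffding component $I_{\{e\}}$, whose variance is, by Proposition \ref{theo:VarI_m}, $\Var[I_{\{e\}}]\approx (n^{-|e|}(1-p_{|e|})/p_{|e|})/\max_F(n^{-v_F}Q_F/P_F)\approx n^{-|e|}(1-p_{|e|})/\max_F(n^{-v_F}Q_F/P_F)$ for $p_{|e|}$ near $1$, which gives the first branch of the wedge to the power $1/2$ (Wasserstein) or $1/3$ (Kolmogorov). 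On the other hand, when that branch is large the size-$|e|$ edge-class itself carries substantial randomness, and one may instead approximate the raw count $Z_{\{e\}}^n=\sum_{|e_b|=|e|}\I_{e_b}$, a sum of $\approx n^{|e|}$ independent Bernoulli variables, directly by a Lyapunov/Berry--Esseen estimate of rate $\big((1-p_{|e|})n^{|e|}\big)^{-1/2}$; this is the second branch (for both distances). Passing to the minimum edge by edge and summing is legitimate because both branches bound the same edgewise contribution.

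Finally, the simplified estimate \eqref{aux8} follows by specialising to $p_{|e|}<c<1$ for all $e$: then $1-p_{|e|}\approx1$, the weights $\E|\widetilde X_b|^3\approx p_{|e_b|}^{-1/2}$ no longer blow up, the fourth-moment part can be kept at the power $1/2$ for both distances, yielding $(\min_FP_Fn^{v_F})^{-1/2}$, and each wedge is dominated by it, since $\big((1-p_{|e|})n^{|e|}\big)^{-1/2}\lesssim(n^{|e|})^{-1/2}\le(\min_FP_Fn^{v_F})^{-1/2}$, the last step because $\min_FP_Fn^{v_F}\le p_{|e|}n^{|e|}\le n^{|e|}$ on taking $F=\{e\}$. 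The main obstacle I anticipate is precisely the near-complete regime $p_{|e|}\to1$: there the influences and variance ratios feeding the abstract inequality degenerate, a single Lyapunov or fourth-moment estimate cannot cover both $p_{|e|}\to0$ and $p_{|e|}\to1$, and one must graft the separate Berry--Esseen argument for the raw edge-counts onto the $U$-statistic bound while keeping every estimate uniform over the order $e_{F^\ast}$ of the dominant chaos and over all $F\subseteq H$.
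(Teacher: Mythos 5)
Your proposal gets \eqref{aux8} right (it does follow from the direct application of the abstract $U$-statistic bound, i.e.\ Theorem \ref{theo:sufficient}), and your edgewise wedge for the high-probability edges is the correct shape: the first branch is exactly $[\alpha_n(\{e\},H)]^{2\gamma}$ coming from the component variance via \eqref{eq:estalpha}, the second is a normal approximation of the raw single-edge count. But there is a genuine gap at the heart of the general bounds: the powers $1/4$ and $1/5$ \emph{cannot} come from feeding the Hoeffding decomposition into the contraction-type inequality, as you claim. Theorem \ref{theo:priv2022} bounds $d_W$ and $d_K$ by the \emph{identical} quantity $C_d\sqrt{S_1+S_2+S_3}$, so it has no mechanism to produce different exponents for the two distances; and what it actually yields is Theorem \ref{theo:sufficient}, namely $\(\min_F P_F n^{v_F}/Q_F\)^{-1/2}$ plus a second term which for probabilities near $1$ can be of order $1$ (see Example \ref{example1}, where this route does not even give convergence). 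Your sentence ``one recovers exactly the summands $(\min_F P_F n^{v_F})^{-1/4}$ and $(\min_F P_F n^{v_F})^{-1/5}$'' therefore asserts the conclusion without a mechanism that can deliver it.

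The missing mechanism is a threshold decomposition with a free parameter that is optimized at the end: split $H=H^{\leq 1-\varepsilon}_n\cup H^{>1-\varepsilon}_n$, use Proposition \ref{prop:subHoeff} to rewrite $\widetilde Z_H^n$ as the \emph{independent} sum $\alpha_n(H^{\leq 1-\varepsilon}_n,H)\widetilde Z^n_{H^{\leq 1-\varepsilon}_n}+\sum_{e}\alpha_n(\{e\},H)\widetilde Z^n_{\{e\}}$ plus the mixed components $I_F$ with $e_F\geq2$ and $E(F\cap H^{>1-\varepsilon}_n)\neq\emptyset$, whose total variance is $\lesssim\varepsilon$ --- these mixed terms are entirely absent from your proposal, yet they are exactly what prevents the ``contraction part'' and the ``edgewise part'' from being a complete decomposition. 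One then needs a quantitative bound for the distance of a sum of independent variables in terms of wedges of component variances (to a power) and component distances; this is Proposition \ref{prop:independent}, built on Proposition \ref{prop:variance}, and it is here --- not in the contraction bounds --- that $d_W$ and $d_K$ genuinely differ, with exponents $\gamma=1/2$ and $\gamma=1/3$ respectively (the $1/3$ being optimal, as the paper shows). Your justification ``passing to the minimum edge by edge and summing is legitimate because both branches bound the same edgewise contribution'' is precisely the statement that requires this independence structure and these smoothing inequalities; the paper proves them in the Appendix because the literature does not supply them. Finally, the remainder cost $\varepsilon^{\gamma}$ must be balanced against the factor $\varepsilon^{-1/2}$ incurred when applying Theorem \ref{theo:sufficient} to $H^{\leq1-\varepsilon}_n$, and choosing $\varepsilon=\(\min_F P_F n^{v_F}\)^{-1/(2\gamma+1)}$ is what produces the exponents $1/4$ (Wasserstein) and $1/5$ (Kolmogorov). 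Without the $\varepsilon$-decomposition, the treatment of the mixed components, and the Appendix inequalities, the proposed argument cannot be completed.
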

	\begin{remark} The first two bounds correspond to convergence conditions and have no limitations on probabilities  $p_{|e|}$. On the other hand, we pay a price  for this universality and for some ranges of parameters the rates are not optimal, which might be verified for instance in the case of $2$-edge count with $p_2=1/n$ (cf. Theorem \ref{thm:4mp}), or simply by observing that  the power of the first term  shifts from $1/2$ in \eqref{eq:d_1} and \eqref{aux8}  to $1/4$ for Wasserstein distance and to $1/5$ for the Kolmogorov one in the general case.  Nevertheless, these bounds are still significantly better than the ones obtained directly by the methods applied earlier to the $\mathbb G(n,p)$ model. The latter one may  diverge to infinity even when $\widetilde{Z}_H^n$ is asymptotically normal.   See Example \ref{example1} for more details.
	
	\end{remark} 
	
	\subsection{Homogeneous case}
	\label{sec:homo}
	In the case when   $  \p =(p, p, \ldots , p)$, we not only simplify the results, but improve them and derive some new ones as well. In particular, we are in position to incorporate the {\it fourth moment phenomenon}. First, to identify properly the  quantities appearing below,  let us observe that estimates \eqref{eq:Eestgen} and \eqref{eq:Vestgen} reduce to the form 
\begin{align}\label{eq:homoE}
\E[{Z}_H^n]&\approx p^{e_H}n^{v_H},\\\label{eq:homoVar}
{\Var}[{Z}_H^n]&\approx{(1-p)n^{2v_H}p^{2e_H}}\({\min\limits_{\substack{F\subseteq H:\,e_F\geq1}}p^{e_F}n^{v_F}}\)^{-1}.
\end{align}
The conditions for asymptotic normality of $\widetilde{Z}_H^n$ are as follows.
		\begin{theorem}\label{thm:homoequiv}
	 In the homogeneous model $\rh(n, p)$, the following are equivalent
	\begin{enumerate}
	\item[1)] $\widetilde{Z}_H^n \overset{d}{\rightarrow} \mathcal{N}$,
	\item[2)]\label{item2} $\min\limits_{\substack{F\subseteq H:\,e_F\geq1}}p^{e_F}n^{v_F}\rightarrow\infty$ and $(1-p)n^{\min\{|e|:e\in E(H)\}}\rightarrow\infty$,
	\item[3)] $(1-p)\min\limits_{\substack{F\subseteq H:\,e_F\geq1}}p^{e_F}n^{v_F}\rightarrow\infty$,
	\item[4)] $\E\[\(\widetilde{Z}_H^n\)^4 \]\rightarrow 3$.
	\end{enumerate}
	\end{theorem}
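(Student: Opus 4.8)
My plan is to prove that $1)$, $2)$ and $3)$ are equivalent by direct arguments, and then to fold the excess--kurtosis condition $4)$ into the cycle through the estimates of Section~5. Throughout I write $M_n=\min_{F\subseteq H,\,e_F\geq1}p^{e_F}n^{v_F}$ and $m=\min\{|e|:e\in E(H)\}$, noting $n^{m}\to\infty$. Everything hinges on one elementary observation: if $(1-p)n^{m}$ stays bounded along a subsequence, then $p\to1$ there, so $p^{e_F}\to1$ uniformly over the finitely many $F\subseteq H$; since a single minimal edge gives $M_n\leq p\,n^{m}\leq n^{m}$ while every $F$ with $e_F\geq1$ satisfies $v_F\geq m$ and hence $p^{e_F}n^{v_F}\geq p^{e_H}n^{m}$, this forces $M_n\approx n^{m}$ on that subsequence.

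The equivalence $2)\Leftrightarrow3)$ is then immediate. Since $1-p\leq1$, condition $3)$ gives $M_n\geq(1-p)M_n\to\infty$ and $(1-p)n^{m}\geq(1-p)M_n\to\infty$, so $3)\Rightarrow2)$. Conversely, assume $2)$ but suppose $(1-p)M_n$ is bounded along a subsequence; passing to a further subsequence on which $p$ converges, the limit cannot be $<1$ (then $1-p$ is bounded below and $(1-p)M_n\to\infty$ because $M_n\to\infty$) nor equal to $1$ (then the observation gives $M_n\approx n^{m}$, so $(1-p)M_n\approx(1-p)n^{m}\to\infty$), a contradiction; hence $2)\Rightarrow3)$.

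For $1)\Leftrightarrow2)$ I would specialize Theorem~\ref{thm:conditions}. Its first condition is literally $M_n\to\infty$, the first half of $2)$. For the second, I rewrite the quotient in \eqref{long_necessary}: with $P_F=p^{e_F}$, $Q_F=(1-p)^{e_F}$, comparing the two forms of the variance \eqref{eq:VarZ} and \eqref{eq:homoVar} yields $\max_{F}n^{-v_F}Q_F/P_F\approx(1-p)/M_n$, so for each edge $e$ the bracket in \eqref{long_necessary} is $\approx n^{|e|}(1-p)+n^{|e|}/M_n$. Given $M_n\to\infty$, this diverges for every $e$ precisely when it does for the smallest edge, and $n^{m}(1-p)+n^{m}/M_n\to\infty$ is equivalent to $(1-p)n^{m}\to\infty$, because the extra summand $n^{m}/M_n$ can help only when $p\to1$, where the observation makes it bounded. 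Thus $1)\Leftrightarrow2)$; the sufficiency half is in fact quantitative and also follows at once from Theorem~\ref{theo:mainbounds}, whose first term vanishes as $M_n\to\infty$ and whose edge sum is bounded by $\sum_{e}((1-p)n^{|e|})^{-1/2}\leq e_H\,((1-p)n^{m})^{-1/2}\to0$.

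It remains to insert $4)$, which I would do by proving $3)\Rightarrow4)$ and $4)\Rightarrow2)$. For $3)\Rightarrow4)$ I would bound the excess kurtosis $\E[(\widetilde Z_H^n)^4]-3$ from above by the same quantities that control $d_{W/K}$ in Theorem~\ref{theo:mainbounds}; using the Hoeffding decomposition \eqref{eq:Z=II} to single out the dominant chaos and the orthogonality of Proposition~\ref{theo:VarI_m} to discard cross terms, these bounds tend to $0$ under $3)$. The decisive and hardest step is the converse $4)\Rightarrow2)$, argued by contraposition: if $M_n\not\to\infty$, the fluctuations are carried by a substructure appearing only boundedly often, a Poisson-type regime in which the standardized count is non-Gaussian and the excess kurtosis stays bounded away from $0$; if instead $(1-p)n^{m}\not\to\infty$, then $p\to1$, the observation gives $M_n\approx n^{m}$, the order-one Hoeffding term built from the minimal edges dominates, and its only randomness comes from a bounded, Poisson-type number of absent size-$m$ edges, again keeping the excess kurtosis positive in the limit. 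I expect this lower bound on the kurtosis in the two failure regimes to be the main obstacle: $\widetilde Z_H^n$ is a mixture of $U$-statistics of several orders, and hypercontractivity—together with the usual soft passage from convergence in law to convergence of fourth moments—degenerates exactly when $p\to0$ or $p\to1$, so the combinatorial estimates of Section~5, rather than a uniform--integrability argument, must carry this implication.
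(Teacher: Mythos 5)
Your handling of the cycle $1)\Leftrightarrow2)\Leftrightarrow3)$ is correct, and it takes a genuinely different route from the paper: you specialize the general Theorem~\ref{thm:conditions} using the (correct) identification $\max_{F\subseteq H,\,e_F\geq1} n^{-v_F}Q_F/P_F\approx(1-p)/M_n$, which follows from comparing \eqref{eq:VarZ} with \eqref{eq:homoVar}, and this is not circular since the paper's proof of Theorem~\ref{thm:conditions} does not use Theorem~\ref{thm:homoequiv} as a statement. The paper instead proves $1)\Rightarrow2)$ directly by a positivity/support argument ($Z_H^n\geq0$ forces $\E[Z_H^n]/\sqrt{\Var[Z_H^n]}\to\infty$, and the complement count $V_H^n=N_H^n-Z_H^n$ handles the second condition), and proves $2)\Rightarrow3)$ by the same case split $p\leq1/2$ versus $p>1/2$ that you use; your route buys uniformity (everything reduces to one general theorem) at the price of invoking the heavier inhomogeneous machinery of Section~6, while the paper's direct argument for $1)\Rightarrow2)$ is elementary and self-contained.

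The genuine gap is in the two implications involving $4)$, which are exactly the fourth-moment content of the theorem. For $3)\Rightarrow4)$, your plan to ``use the orthogonality of Proposition~\ref{theo:VarI_m} to discard cross terms'' fails: orthogonality of the Hoeffding components $I_F$ kills cross terms in \emph{second} moments only, not in fourth moments, and no bound on $d_{W/K}$ can control $\E[(\widetilde Z_H^n)^4]$ — convergence in Kolmogorov or Wasserstein distance does not imply convergence of fourth moments without a separate uniform-integrability or moment computation. For $4)\Rightarrow2)$ you concede the step rather than prove it; moreover the assertion that a non-Gaussian (Poisson-type) limit forces the excess kurtosis to stay away from $0$ is precisely what must be established, since for general laws a kurtosis equal to $3$ does not characterize Gaussianity. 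The paper closes both implications through Theorem~\ref{thm:4mp}, whose proof is the combinatorial core of Section~5: Lemma~\ref{lem:4mp-formula} expresses $\E[(Z_H^n-\E Z_H^n)^4]-3(\Var[Z_H^n])^2$ as a sum over \emph{edgewise inseparable} configurations, Lemma~\ref{EYI} estimates the mixed moments $\E[Y_{i_1}\cdots Y_{i_r}]$ in the regimes $p\to0$ and $p\to1$, and Theorems~\ref{thm:4mpp->0} and~\ref{thm:4mpp->1} then yield the \emph{two-sided} estimates $\E[(\widetilde Z_H^n)^4]-3\approx M_n^{-1}$ for small $p$ and $\approx\big((1-p)n^{\min\{|e|:e\in E(H)\}}\big)^{-1}$ for $p$ near $1$ (the lower bounds coming from explicitly constructed inseparable configurations sharing a common subhypergraph, respectively a common minimal edge), with \eqref{aux3} covering intermediate $p$; these give \eqref{eq:4mp1}, from which $3)\Rightarrow4)$ and $4)\Rightarrow3)\Rightarrow1)$ both follow. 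Without proving these two-sided combinatorial bounds — in particular the lower bounds on the excess kurtosis in the failure regimes — your cycle cannot be closed, so the proposal as it stands does not prove the theorem.
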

	Although the proofs are moved to subsequent sections, we would like to present here the proof of the implication {\it 1) $\Rightarrow$ 2)}. The reason is that it is very short and significantly simpler than the known  proofs  in the case of  the binomial model $\mathbb G(n, p)$ \cite {Ruc88, JLR}.  
		\begin{proof}[Proof of 1) $\Rightarrow$ 2) in Theorem \ref{thm:homoequiv}] The first condition in  {\it 2)} is obvious  for $p>1/2$, we therefore consider $p\leq 1/2$.  Due to ${Z}_H^n\geq0$ we have
	\begin{align}\label{aux11}
	\widetilde{Z}_H^n =\frac{{Z}_H^n-\E[{Z}_H^n]}{\sqrt{\mathrm{Var}[{Z}_H^n]}}\geq- \frac{\E[{Z}_H^n]}{\sqrt{\mathrm{Var}[{Z}_H^n]}}.
	\end{align}
	Since $\widetilde{Z}_H^n \overset{d}{\rightarrow} \mathcal{N}$ and the distribution  $\mathcal{N}(0,1)$ is supported on the whole real line, it follows that the right-hand side of \eqref{aux11} tends to $-\infty$, and consequently $\frac{\E[{Z}_H^n]}{\sqrt{\mathrm{Var}[{Z}_H^n]}}\rightarrow\infty$. Then, the estimates \eqref{eq:homoE} and \eqref{eq:homoVar}  give us ${(\E[{Z}_H^n])^2}/{\mathrm{Var}[{Z}_H^n]}\approx \min_{\substack{F\subseteq H:\,e_F\geq1}}p^{e_F}n^{v_F}$, which implies the first condition in {\it 2)}. 
	
	Similarly, in the case of the latter condition in {\it 2)} it suffices to focus on $p>1/2$. Let us denote by $V^n_H$ the number of all hypergraphs isomorphic to $H$ that do not occur in $\mathbb H(n,p)$, i.e.,
	$$V_H^n:=N_H^n-{Z}_H^n,$$
	where $N_H^n=|\{H'\in K^*_n:H'\simeq H\}|$ is the number of all subhypergraphs of the complete hypergraph $K^*_n$ that are isomorphic to $H$. Since $V_H^n\geq0$ and $\widetilde V_H^n=-\widetilde{Z}_H^n  \overset{d}{\rightarrow} \mathcal{N}$, repeating the argument from the first part of the proof, we get ${\E[{V}_H^n]}/{\sqrt{\mathrm{Var}[{V}_H^n]}}\rightarrow\infty$. Finally, using \eqref{eq:homoVar} and the assumption $p>1/2$ we estimate
	\begin{align*}
	\frac{\E[{V}_H^n]}{\sqrt{\mathrm{Var}[{V}_H^n]}}&=\frac{\E\[\sum_{\substack{H'\subseteq K^*_n\\H'\simeq H}}\(1-\I_{\{H'\subseteq\mathbb H(n,p)\}}\)\]}{\sqrt{\mathrm{Var}[{Z}_H^n]}}=\frac{N_H^n(1-p^{e_H})}{\sqrt{\mathrm{Var}[{Z}_H^n]}}\\[6pt]
	&\approx n^{v_H}(1-p)\sqrt{\frac{\min\limits_{\substack{F\subseteq H:\,e_F\geq1}}n^{v_F}}{{{(1-p)n^{2v_H}}}}}
	=\sqrt{(1-p)n^{\min\{|e|:e\in E(H)\}}},
	\end{align*}
	which ends the proof.
	\end{proof}
	Next, we present convergence rates corresponding to the asymptotic normality conditions.	
		\begin{theorem}\label{thm:4mp}
	For any hypergraph $H$ we have
	\begin{align}
	d_{W/K}(\widetilde{Z}_H^n, \mathcal{N})&\lesssim \((1-p){{ \min\limits_{\substack{F\subseteq H:\,e_F\geq1}}p^{e_F}\,n^{v_{F}}}}\)^{-1/2}\\\label{eq:4mp1}
	&\approx {\left|\E\[\(\widetilde{Z}_H^n\)^4 \]-3\right|}^{1/2}+{n^{-\min\{|e|:e\in E(H)\}/2}}.
	\end{align}
	 Additionally, there exists $\delta\in (0,1/2)$ such that for $p\in(0,\delta)\cup(1-\delta,1)$ the last term as well as the absolute value might be omitted, i.e., it holds that 
	\begin{align}\label{eq:4mp2}
	\({{ (1-p)\min\limits_{\substack{F\subseteq H:\,e_F\geq1}}p^{e_F}\,n^{v_{F}}}}\)^{-1}\approx {\E\[\(\widetilde{Z}_H^n\)^4 \]-3}.
	\end{align}
	\end{theorem}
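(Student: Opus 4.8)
# Proof Proposal for Theorem \ref{thm:4mp}

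\textbf{Overall strategy.} The plan is to prove the inequality $d_{W/K} \lesssim \big((1-p)\min_F p^{e_F}n^{v_F}\big)^{-1/2}$ by specializing the general Wasserstein and Kolmogorov bounds from Theorem \ref{theo:mainbounds} to the homogeneous setting, and then to establish the asymptotic equivalence \eqref{eq:4mp1} and the refinement \eqref{eq:4mp2} through a separate computation of the excess kurtosis $\E[(\widetilde Z_H^n)^4]-3$. These are really two tasks: first a bound (following from earlier results), and second a sharp two-sided comparison of three quantities, which is where the combinatorial work lies.

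\textbf{Step 1: the upper bound.} First I would substitute $p_{|e|}=p$ into the two bounds of Theorem \ref{theo:mainbounds}. In the homogeneous case $Q_F/P_F = (1-p)^{e_F}/p^{e_F}$ and the maximum $\max_{F} n^{-v_F}Q_F/P_F$ simplifies. The goal is to show each summand over $e\in E(H)$ is dominated by $\big((1-p)\min_F p^{e_F}n^{v_F}\big)^{-1/2}$; since $|e|\geq \min\{|e'|:e'\in E(H)\}$, the term $\big((1-p)n^{|e|}\big)^{-1/2}$ in the minimum is controlled, and the competing quotient term handles $p$ near $1$. The powers $1/4,1/5,1/3$ from the general theorem should all improve to the clean exponent $1/2$ here because homogeneity collapses the delicate interplay between distinct edge probabilities that forced the weaker exponents in the general statement. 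I would verify both $d_W$ and $d_K$ reduce to the same bound, justifying the use of $d_{W/K}$.

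\textbf{Step 2: the kurtosis comparison \eqref{eq:4mp1}.} This is the crux. Using the Hoeffding decomposition $\widetilde Z_H^n = \sum_{m=1}^{e_H} I_m$ with the $I_m$ orthogonal and each $\E[I_m^2]=\sum_{e_F=m}\Var[I_F]$ given by Proposition \ref{theo:VarI_m}, I would expand $\E[(\widetilde Z_H^n)^4]$ and isolate the deviation from $3$. The guiding principle from the fourth moment phenomenon is that $\E[(\widetilde Z_H^n)^4]-3$ is governed, up to constants, by the fourth cumulant, which decomposes into contributions indexed by subhypergraphs. I expect the dominant contributions to match $\max_F n^{-v_F}Q_F/P_F$ (equivalently $\big((1-p)\min_F p^{e_F}n^{v_F}\big)^{-1}$ after using \eqref{eq:homoVar}), plus a separate term of order $n^{-\min|e|}$ coming from the near-degenerate regime $p\to 1$ where a single edge carries the fluctuation. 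Taking square roots and using the triangle inequality yields the $\approx$ in \eqref{eq:4mp1}. The main obstacle will be bounding the fourth moment tightly enough in both directions simultaneously: the absolute value in \eqref{eq:4mp1} signals that in the general homogeneous range the kurtosis excess need not be sign-definite, so I must account for possible cancellation between the $p<1/2$ and $p>1/2$ contributions rather than simply summing magnitudes.

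\textbf{Step 3: the refinement \eqref{eq:4mp2}.} Finally, for $p\in(0,\delta)\cup(1-\delta,1)$ I would argue that one of the two competing terms strictly dominates, so the $\min$/$\wedge$ structure degenerates and the $n^{-\min|e|}$ correction is absorbed. For small $p$ the factor $1-p$ is bounded below and the term $n^{-\min|e|}$ is dominated by $\big((1-p)\min_F p^{e_F}n^{v_F}\big)^{-1}$; for $p$ near $1$ the opposite regime forces the single-edge term to dominate with a definite sign, removing the absolute value. Here I would need to show the excess kurtosis is genuinely positive (so the absolute value can be dropped) when $p$ is bounded away from the intermediate range, which follows once a single cumulant term is shown to dominate all cross terms. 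Choosing $\delta$ small enough to guarantee this dominance is a finite, $H$-dependent calculation.
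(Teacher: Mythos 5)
Your Step 1 does not work as written, and this is the decisive gap. Theorem \ref{theo:mainbounds} states bounds whose leading terms carry the exponents $1/4$ (Wasserstein) and $1/5$ (Kolmogorov); these exponents are part of the statement and do not improve when you substitute $p_{|e|}=p$ — specializing the parameters of a weaker bound cannot produce a stronger bound, and there is no mechanism by which ``homogeneity collapses'' $1/4$ or $1/5$ back to $1/2$. The paper instead derives the first inequality of the theorem from the sharper intermediate result, Theorem \ref{theo:sufficient}, namely from the bound \eqref{eq:sufficient2}, which already carries the exponent $1/2$ in both terms. In the homogeneous case \eqref{eq:sufficient2} reads $d_{W/K}(\widetilde Z_H^n,\mathcal N)\lesssim \big(\min_{F\subseteq H:\,e_F\geq1} p^{e_F}n^{v_F}\big)^{-1/2}+\big((1-p)n^{\min\{|e|:e\in E(H)\}}\big)^{-1/2}$, and both terms are at most $\big((1-p)\min_{F} p^{e_F}n^{v_F}\big)^{-1/2}$, since $1-p\leq1$ and $\min_{F} p^{e_F}n^{v_F}\leq p\, n^{\min\{|e|\}}\leq n^{\min\{|e|\}}$. (Theorem \ref{theo:mainbounds} was obtained from Theorem \ref{theo:sufficient} precisely by sacrificing the exponent in order to handle strong inhomogeneity; routing your proof through it throws away exactly what you need here.)

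There is also a structural error in Steps 2--3: you attribute the additive term $n^{-\min\{|e|:e\in E(H)\}/2}$ in \eqref{eq:4mp1} to the regime $p\to1$, but it is needed for the opposite reason. For $p>1-\varepsilon$ the paper proves the clean two-sided estimate $\E[(\widetilde Z_H^n)^4]-3\approx \big((1-p)n^{\min\{|e|\}}\big)^{-1}$ (Theorem \ref{thm:4mpp->1}), which is why \eqref{eq:4mp2} holds there \emph{without} any correction; the correction term is required in the intermediate regime $p\in(\delta,1-\delta)$, where the excess kurtosis can vanish identically (the mesokurtic binomial at $p=1/2\pm\sqrt{1/12}$, as the paper's remark points out) while $\big((1-p)\min_F p^{e_F}n^{v_F}\big)^{-1/2}\approx n^{-\min\{|e|\}/2}$ does not. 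The paper handles this by a three-regime split: two-sided estimates for $p<\varepsilon$ (Theorem \ref{thm:4mpp->0}) and $p>1-\varepsilon$ (Theorem \ref{thm:4mpp->1}), plus the one-sided bound $\big|\E[(\widetilde Z_H^n)^4]-3\big|\lesssim (1-p)\,n^{-\min\{|e|\}}\cdot n^{4v_H-2\min\{|e|\}}/( \mathrm{Var}[Z_H^n])^2\lesssim n^{-\min\{|e|\}}$ in the middle, obtained from \eqref{aux3} and \eqref{eq:homoVar}. Beyond this misattribution, your Step 2 is a sketch rather than an argument: the real content is the exact combinatorial identity of Lemma \ref{lem:4mp-formula}, in which edgewise separable configurations either vanish or cancel (including the $24$-versus-$8$ counting for quadruples), together with the two-sided moment estimates of Lemma \ref{EYI}; saying ``I expect the dominant contributions to match'' does not dispose of the cancellations you yourself flag, and a cumulant/Hoeffding route would still have to carry out exactly these configuration counts.
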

	\begin{remark} In view of \eqref{eq:4mp2}, a  question that arises is  whether the term ${n^{-\min\{|e|:e\in E(H)\}/2}}$ in \eqref{eq:4mp1} is necessary. In general it indeed is necessary, which might be seen in the case of $H$ being a single edge of size $2$. $Z_H^n$ follows then the binomial distribution $B\({n\choose 2}, p\)$ and it is well known that for ${ p=1/2 \pm {\sqrt {1/12}}}$ the binomial distribution is mesokurtic, which exactly means that 
	$$\E\[\(\widetilde{Z}_H^n\)^4 \]-3=0,$$
	while for such $p$'s it holds that  
	$$\({{(1-p) \min\limits_{\substack{F\subseteq H:\,e_F\geq1}}{P_F}\,n^{v_{F}}}}\)^{-1/2}\approx \frac1n=n^{-\min\{|e|:e\in E(H)\}/2}.$$
	\end{remark}

		\begin{remark}\label{rem:isolated} 
		Let $H$ be a fixed hypergraph with $k$ isolated vertices, and denote by $H'$ the graph created by removing those vertices from $H$. Then we have $Z_H^n={n-v_{H'}\choose k}Z_{H'}$, and consequently
		$\widetilde Z_H^n=\widetilde Z_{H'}$.
		One can also verify that all minima and maxima appearing in the results of this section stay unchanged, regardless of whether we take them with respect to $H$ or $H'$. This suffices to prove all the results for $H$ without isolated vertices, which we do, as it allows us to identify  a hypergraph by its edges only.  Note that this observation has been already made  in \cite{ER}, see Remark 4.6 therein.
	\end{remark}

		\section{Bounds for distances}
		
		Let us recall the following bound on both: the Wasserstein and the Kolmogorov distance between a normally distributed random variable $\mathcal N\sim \mathcal N(0,1)$ and a {random} variable given by its Hoeffding decomposition.

	\begin{theorem}[Theorem 4.1 in \cite{PSejp2}]\label{theo:priv2022}
	Let  $1\leq d\leq n$. For any generalized $U$-statistic  $W\in L^4(\Omega)$ of order $d$ admitting the Hoeffding decomposition
from Definition \ref{def:hoeff}, and such that $\E[W^2]=1$, we have
\begin{align}\label{eq:priv2022}
d_{W/K}(W, \mathcal{N}(0, 1))
				\leq C_d\sqrt{S_1 + S_2 + S_3},
\end{align}
				where  $C_d$ depends only on  $d$ and 
		\begin{equation}
			\begin{aligned}
			S_1&=\sum_{0\leq l< i\leq d}\sum_{|A|=i-l}\mathbb{E}\Big[\Big(\sum_{|B|=l, B\cap A=\emptyset}\mathbb{E}[W_{A\cup B}^2|\mathcal{F}_A]\Big)^2\Big],\\
				S_2&=\sum_{1\leq l< i\leq d}\sum_{\substack{|A_1|=|A_2|=i-l\\A_1\cap A_2=\emptyset}}\mathbb{E}\Big[\Big(\sum_{\substack{|B|=l, B\cap(A_1\cup A_2)=\emptyset}}\mathbb{E}[W_{A_1\cup B}W_{A_2\cup B}|\mathcal{F}_{A_1\cup A_2}]\Big)^2\Big],\\
				S_3&=\sum_{1\leq l< i\leq d}\sum_{|A|=i-l}\mathbb{E}\Big[\Big(\sum_{|B|=l, B\cap A =\emptyset}\mathbb{E}[W_B W_{A\cup B}|\mathcal{F}_A]\Big)^2\Big].
			\end{aligned}
		\end{equation}
		
	\end{theorem}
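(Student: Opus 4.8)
The plan is to prove this abstract bound by Stein's method on the product probability space, combined with the discrete Malliavin-type calculus naturally attached to the Hoeffding decomposition. Since the statement controls $d_{W/K}=\max\{d_W,d_K\}$, I would treat the two distances in parallel through the Stein equation $f_h'(w)-wf_h(w)=h(w)-\E[h(\mathcal{N})]$: for $d_W$ the test functions $h$ are $1$-Lipschitz and the solution satisfies $\|f_h'\|_\infty\lesssim1$, $\|f_h''\|_\infty\lesssim1$, whereas for $d_K$ one takes $h$ to be an indicator of a half-line, whose Stein solution has bounded $f_h,f_h'$ but a jump in $f_h'$, so extra care is needed (see the obstacle below). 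In both cases one starts from $d_{W/K}(W,\mathcal{N})\le\sup_h|\E[f_h'(W)-Wf_h(W)]|$.

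Next I would set up the algebraic engine. To the independent family $(X_j)$ I attach the finite-difference operators $D_jF=F-F^{(j)}$ (where $F^{(j)}$ resamples and integrates out the $j$-th coordinate) and the Ornstein--Uhlenbeck-type generator acting on the Hoeffding decomposition by $-LW=\sum_J|J|\,W_J$, with pseudo-inverse $-L^{-1}W=\sum_J|J|^{-1}W_J$. The key structural identity is the integration-by-parts formula $\E[Wf(W)]=\E[\langle Df(W),-DL^{-1}W\rangle]$, valid because $W$ is centred and admits the Hoeffding decomposition. A discrete chain rule with a Taylor expansion then yields $\E[Wf_h(W)]=\E[f_h'(W)\,\langle DW,-DL^{-1}W\rangle]+R$, where $R$ collects the second-order finite-difference remainders. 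Subtracting and using $\E[\langle DW,-DL^{-1}W\rangle]=\E[W^2]=1$, I obtain $|\E[f_h'(W)-Wf_h(W)]|\lesssim\E\big|1-\langle DW,-DL^{-1}W\rangle\big|+|R|\le\sqrt{\Var(\langle DW,-DL^{-1}W\rangle)}+|R|$, the last step being Cauchy--Schwarz applied to the mean-one variable $\langle DW,-DL^{-1}W\rangle$.

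The heart of the argument is then the combinatorial evaluation of $\Var(\langle DW,-DL^{-1}W\rangle)$. Writing $\langle DW,-DL^{-1}W\rangle=\sum_j(D_jW)(-D_jL^{-1}W)$ and inserting $W=\sum_JW_J$ turns this into a quadratic expression in the orthogonal blocks $W_J$. Its variance expands over quadruples of index sets, and the martingale property $\E[W_J\mid\mathcal{F}_K]=0$ for $J\nsubseteq K$ annihilates the non-matching terms. The survivors organise exactly according to how a lower block $B$ (of size $l$) is shared between two upper blocks whose ``off-diagonal'' parts have size $i-l$: the three admissible overlap patterns --- a single off-diagonal set $A$ carrying a summed square, two disjoint off-diagonal sets $A_1,A_2$ glued along $B$, and the mixed pattern --- are precisely $S_1$, $S_2$, and $S_3$. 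Bounding the remainder $R$ and the analogous third-moment contributions by these same three sums closes the estimate.

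The main obstacle will be twofold. First, the Kolmogorov distance: because the Stein solution for an indicator has a discontinuous derivative, the naive substitution in the second step loses control of the remainder, so one must instead run an induction on the order $d$ together with a smoothing (or concentration) argument for $W$ to recover the same $\sqrt{S_1+S_2+S_3}$ rate without acquiring an extra power. Second, the bookkeeping that collapses $\Var(\langle DW,-DL^{-1}W\rangle)$ to exactly the three families $S_1,S_2,S_3$ is delicate: one must check that every surviving quadruple is dominated by a term of one of the three prescribed shapes, and that the finite-difference remainders $R$ --- genuinely nonzero since the discrete chain rule is only approximate --- do not generate contributions lying outside $S_1+S_2+S_3$.
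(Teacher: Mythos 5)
Before comparing: the paper you are working against does not actually prove this statement. Theorem~\ref{theo:priv2022} is imported verbatim from the cited reference (Theorem 4.1 of \cite{PSejp2}) and is used in Section 4 purely as an external tool, so there is no internal proof to measure your attempt against; your proposal has to stand on its own, judged against the methodology of the cited source.

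Your sketch does land in the right circle of ideas --- Stein's method combined with finite-difference Malliavin-type operators on a product space, integration by parts through $-DL^{-1}$, and control of $\Var\big(\langle DW,-DL^{-1}W\rangle\big)$ --- which is indeed the technology behind results of this type. But as a proof it has two genuine gaps, both of which you name yourself and then leave unresolved. First, the Kolmogorov distance: the Stein solution for an indicator test function has a jump in its derivative, so the chain-rule/Taylor step that works for $d_W$ does not deliver the rate $\sqrt{S_1+S_2+S_3}$ for $d_K$; you propose ``induction on $d$ together with a smoothing (or concentration) argument,'' but this is precisely the hard content of the theorem --- the paper's introduction emphasizes that Kolmogorov bounds of this quality for subgraph counts were open for roughly three decades until \cite{PSbej} --- so flagging the obstacle is not the same as overcoming it, and no mechanism is given that avoids the loss of a power (the usual smoothing argument degrades the exponent). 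Second, the combinatorial collapse: you assert that the variance expansion of $\langle DW,-DL^{-1}W\rangle$, together with the chain-rule remainder $R$ and the third-moment-type contributions, is dominated by exactly the three families $S_1,S_2,S_3$, but you give no argument that the surviving quadruples of Hoeffding blocks organize into those three overlap patterns, nor that $R$ --- which in the discrete setting involves products of several difference operators, not a clean quadratic expression --- is controlled by the same quantities. Without these two steps carried out, the proposal is a program for a proof rather than a proof.
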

This is the main tool used  in the proof of the next theorem.
	
	\begin{theorem}\label{theo:sufficient}
		For a hypergraph  $H$ with no isolated vertices we have 
		 \begin{align}\label{eq:sufficient1}
		 d_{W/K}(\widetilde{Z}_H^n, \mathcal{N})&\lesssim\bigg({ \min\limits_{\substack{F\subseteq H\\e_F\geq1}}\frac{P_F}{Q_F}\,n^{v_{F}}}\bigg)^{-1/2}+{ \min\limits_{\substack{F\subseteq H\\,e_F\geq1}}\frac{P_F}{Q_F}\,n^{v_{F}}}{\bigg( \max\limits_{\substack{e\in E(H)\\ p_{|e|}>1/2}}\frac{1-p_{|e|}}{n^{3|e|}}\bigg)^{1/2}}\\[6pt] \label{eq:sufficient2}
&\leq\bigg({ \min\limits_{\substack{F\subseteq H\\e_F\geq1}}{P_F}\,n^{v_{F}}}\bigg)^{-1/2}+\bigg( \max\limits_{\substack{e\in E(H)\\ p_{|e|}>1/2}}\frac1{(1-p_{|e|}){n^{|e|}}}\bigg)^{1/2}.
		 \end{align}
	\end{theorem}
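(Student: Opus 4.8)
The plan is to apply Theorem~\ref{theo:priv2022} to the standardized variable $W=\widetilde{Z}_H^n$, which by Theorem~\ref{theo:hoeff} is a generalized $U$-statistic of order $d=e_H$ with the explicit Hoeffding terms $W_B$ from \eqref{eq:W_B}. Thus the whole problem reduces to bounding the three quantities $S_1$, $S_2$, $S_3$ from above by the square of the right-hand side of \eqref{eq:sufficient1}. First I would record the two combinatorial ingredients that will be used repeatedly: for $B\sim F$ one has $\sum_{A}\I_{A\cup B\sim H}\approx n^{v_H-v_F}$ (as in the proof of Proposition~\ref{theo:VarI_m}), and $\E[\widetilde X_b^2]=1$ while the odd centered moments and the fourth moment of $\widetilde X_b$ carry factors of the form $(1-2p_{|e_b|})/\sqrt{p_{|e_b|}(1-p_{|e_b|})}$ and $Q_{\{e_b\}}/P_{\{e_b\}}+\ldots$; these are exactly what will produce the $(1-p_{|e|})/n^{3|e|}$ type terms after the conditional expectations are expanded.

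The key computation is to open each $\E[\,\cdot\mid\mathcal F_A]$ in $S_1,S_2,S_3$ using the product structure of $W_B$. For $S_1$, conditioning $W_{A\cup B}^2$ on $\mathcal F_A$ leaves a product over $b\in B$ of $\E[\widetilde X_b^2]=1$ times the squared indicator sum, so after summing over $B$ and $A$ the leading contribution is governed by $\Var[I_i]$-type sums; matching these against \eqref{eq:VarZ} and Proposition~\ref{theo:VarI_m} should yield the first term $\big(\min_{F}\tfrac{P_F}{Q_F}n^{v_F}\big)^{-1}$. The genuinely new contributions come from $S_3$ (and the analogous cross terms in $S_2$): there the factor $W_B W_{A\cup B}$ contains, for each $b$ in the overlap, the third moment $\E[\widetilde X_b^3]$, which is nonzero and small precisely when $p_{|e_b|}$ is bounded away from $0$ and $1$, but becomes large (like $\sqrt{(1-p)/p}$ up to the normalization) when $p_{|e|}\to1$. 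Carefully collecting the worst edge $e$ with $p_{|e|}>1/2$ is what produces the second summand $\min_F\tfrac{P_F}{Q_F}n^{v_F}\big(\max_{e:p_{|e|}>1/2}\tfrac{1-p_{|e|}}{n^{3|e|}}\big)^{1/2}$; the prefactor $\min_F\tfrac{P_F}{Q_F}n^{v_F}$ appears because, after dividing by $\sqrt{\Var[Z_H^n]}$ through each $W_B$, one inverts the variance normalization once more than in $S_1$.

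The main obstacle I anticipate is the bookkeeping in $S_2$ and $S_3$: one must track, for each choice of level $l$ and gap $i-l$, which subhypergraphs $F,F'$ the index sets $B$, $A\cup B$, $A_1\cup B$, $A_2\cup B$ are isomorphic to, and then estimate the resulting mixed sums $\sum_{B\sim F'}\prod_{b\in B}(\cdots)\,(\sum_A \I_{A\cup B\sim H})(\sum_A \I_{A\cup B'\sim H})$ uniformly over all these cases. The delicate point is that a naive bound loses the cancellation carried by $1-2p_{|e|}$ in the third moment, so one has to keep the factor $(1-p_{|e|})$ rather than a constant; this is exactly where the exponent $3|e|$ (rather than $|e|$) in $\tfrac{1-p_{|e|}}{n^{3|e|}}$ emerges, coming from the cube of $n^{-|e|}$ attached to a single high-probability edge. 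Once \eqref{eq:sufficient1} is established, the passage to \eqref{eq:sufficient2} is routine: on the first term one uses $Q_F\le1$, and on the second one bounds $\min_F\tfrac{P_F}{Q_F}n^{v_F}$ by the single-edge value $\tfrac{p_{|e|}}{1-p_{|e|}}n^{|e|}$ for the maximizing edge $e$, so that the product with $\big(\tfrac{1-p_{|e|}}{n^{3|e|}}\big)^{1/2}$ collapses to $\big((1-p_{|e|})n^{|e|}\big)^{-1/2}$, giving the stated simplified bound.
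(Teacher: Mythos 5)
Your overall strategy is the same as the paper's: apply Theorem \ref{theo:priv2022} to the Hoeffding decomposition of Theorem \ref{theo:hoeff}, bound $S_1,S_2,S_3$ via the combinatorial estimate $\sum_A\I_{A\cup B\sim H}\approx n^{v_H-v_F}$, and your final paragraph deducing \eqref{eq:sufficient2} from \eqref{eq:sufficient1} is essentially the paper's argument and is correct. However, there is a genuine error in your identification of the mechanism producing the second term of \eqref{eq:sufficient1}. You attribute it to third moments $\E[\widetilde X_b^3]$ arising in $S_3$ (and the cross terms of $S_2$). But no third moments ever appear in any of the three sums: in $S_3$ the shared index set is $B$, so the product $W_BW_{A\cup B}$ contains $\widetilde X_b^{\,2}$ for each $b\in B$ and $\widetilde X_j$ to the \emph{first} power for $j\in A$; conditioning on $\mathcal F_A$ replaces each $\widetilde X_b^{\,2}$ by $\E[\widetilde X_b^{\,2}]=1$, and squaring and taking the outer expectation only invokes $\E[\widetilde X_j^{\,2}]=1$. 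The same happens in $S_2$ with $A_1\cup A_2$ in place of $A$. Consequently $S_2$ and $S_3$ are bounded by $\frac{P_H^4n^{4v_H}}{(\Var[Z_H^n])^2}\max_{F\subseteq H,\,e_F\geq1}\big(\tfrac{Q_F}{P_F}n^{-v_F}\big)^3$, which after normalization yields only the \emph{first} term; your concern about preserving the cancellation carried by $1-2p_{|e|}$ is a red herring.

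The problematic term $\min_F\tfrac{P_F}{Q_F}n^{v_F}\big(\max_{e:\,p_{|e|}>1/2}\tfrac{1-p_{|e|}}{n^{3|e|}}\big)^{1/2}$ in fact comes from $S_1$, which you claim contributes only the first term — that is the second error, and it is where your plan would break down if executed. In $S_1$ the conditional expectation $\E[W_{A\cup B}^2\mid\mathcal F_A]$ integrates out only the $B$-variables and retains the random factor $\prod_{j\in A}\widetilde X_j^{\,2}$; squaring the sum over $B$ and taking the expectation then produces $\prod_{j\in A}\E[\widetilde X_j^{\,4}]$, and for a Bernoulli variable $\E[\widetilde X^4]=[p^3+(1-p)^3]/[p(1-p)]\approx 1/[p(1-p)]$, which blows up as $p\to1$. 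Tracking this factor — the paper restricts the maximum over pattern hypergraphs to those with at most one edge of probability exceeding $1/2$, and for a single such edge $e$ the relevant quantity $\tfrac{Q_{K_0}}{P_{K_0}^3}n^{-3v_{K_0}}$ becomes $\approx(1-p_{|e|})n^{-3|e|}$ — is precisely what creates the extra summand $\max_{e:\,p_{|e|}>1/2}(1-p_{|e|})n^{-3|e|}$ inside $S_1$, and hence, after dividing by $(\Var[Z_H^n])^2/(P_H^4n^{4v_H})\approx\big(\max_F\tfrac{Q_F}{P_F}n^{-v_F}\big)^2$ and taking the square root, the second term of \eqref{eq:sufficient1}. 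So a corrected version of your plan must keep the fourth moments of the $A$-variables in $S_1$ rather than hunting for third moments in $S_3$; everything else in your outline (the reduction to Theorem \ref{theo:priv2022}, the isomorphism-class bookkeeping, and the passage to \eqref{eq:sufficient2}) is sound.
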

	\begin{proof}
		
		We will apply  Theorem \ref{theo:priv2022} to Theorem \ref{theo:hoeff}  and bound the sums $S_1$, $S_2$ and $S_3$. Recall that 
		
		$$
		W_B=\frac{P_H}{\sqrt{\mathrm{Var} [Z_H^n]}}  \prod_{b\in B}\(\widetilde X_b\sqrt{\frac{1-p_{|e_b|}}{p_{|e_b|}}}\) \sum_{\substack{A\subseteq[2^n-1]\\A\cap B = \emptyset}}\I _{A\cup B \sim H}.
		$$
		Consequently, we have 
		\begin{align*}
		\mathbb{E}[W_{A_1\cup B}W_{A_2\cup B}|\mathcal{F}_{A_1\cup A_2}]&=\frac{P^2_H}{{\mathrm{Var} [Z_H^n]}}  \prod_{j\in A_1\cup B}\sqrt{\frac{1-p_{|e_j|}}{p_{|e_j|}}} \prod_{j\in A_2\cup B}\sqrt{\frac{1-p_{|e_j|}}{p_{|e_j|}}} \prod_{j\in A_1\cup A_2}\widetilde X_{j}\\
		&\ \ \ \ \times\(\sum_{\substack{C:C\cap( A_1\cup B) =\emptyset}}\I _{A_1\cup B\cup C \sim H}\)\(\sum_{\substack{C':C'\cap(A_2\cup B) =\emptyset}}\I _{A_2\cup B\cup C' \sim H}\),
		\end{align*}
which gives us 
\begin{align*}
S_2&=\frac{P^4_H}{(\mathrm{Var} [Z_H^n])^2}\sum_{1\leq l< i\leq e_H}\sum_{\substack{|A_1|=|A_2|=i-l\\A_1\cap A_2=\emptyset}} \prod_{j\in A_1}{\frac{1-p_{|e_j|}}{p_{|e_j|}}} \prod_{j\in A_2}{\frac{1-p_{|e_j|}}{p_{|e_j|}}}\\
&\hspace{15pt}\times\Bigg[\sum_{\substack{\substack{|B|=l\\ K\cap(A_1\cup A_2)=\emptyset}}} \prod_{k\in  B}{\frac{1-p_{|e_k|}}{p_{|e_k|}}}\bigg(\sum_{\substack{C:C\cap (A_1\cup B) =\emptyset}}\I _{A_1\cup B\cup C \sim H}\bigg)\bigg(\sum_{\substack{C':C'\cap (A_2\cup B) =\emptyset}}\I _{A_2\cup B\cup C' \sim H}\bigg)\Bigg]^2\\
&\leq\frac{P^4_H}{(\mathrm{Var} [Z_H^n])^2}\sum_{\substack{K,K'\subseteq K^*_n\\e_K=e_{K'}\geq1\\e_{K\cap K'=0}}}\frac{Q_{K}Q_{K'}}{P_{K}P_{K'}}\\[5pt]
&\hspace{15pt}\times\Bigg[\sum_{\substack{L\subseteq K^*_n:\,e_L\geq1\\ e_{L\cap(K\cup K')}=0}}\frac{Q_{L}}{P_{L}}\bigg(\sum_{\substack{M\subseteq K^*_n\\ e_{M\cap (K\cup L)}=0}}\I _{K\cup L\cup M\simeq H}\bigg)\bigg(\sum_{\substack{M'\subseteq K^*_n\\ e_{M'\cap (K'\cup L)}=0 }}\I _{K'\cup L\cup M'\simeq H}\bigg)\Bigg]^2.
\end{align*}
Next, we will consider cases depending on the structure created by the involved hypergraphs. To do so, for 
	two families  of hypergraphs $H_1, H_2, \ldots, H_m$ and  $H_1', H_2', \ldots, H_m'$, $m\in\mathbb{N}$, we will write
	$$\mathcal{U}(H_1, H_2, \ldots, H_m)=\mathcal{U}(H_1', H_2', \ldots, H_m'),$$
	 whenever they are isomorphic, by which we mean that for any nonempty   $I\subseteq [m]$ we have $\bigcap_{i\in I}H_i\simeq \bigcap_{i\in I}H_i'$. Furthermore, let us note that the union  $K\cup K'\cup L$ of hypergraphs in the sum above may not be isomorphic to any subhypergraph of $H$, but the number of their vertices is at most $2v_{H}$. We will therefore choose the patterns of their structures from the following set
	 $$\mathcal K_H:=\{K\subseteq K^*_{2v_H}: e_K\geq1\text{ and }K\simeq F\text{ for some }F\subseteq  H\}.$$   All this leads us to 
\begin{align*}
S_2&\lesssim\frac{P^4_H}{(\mathrm{Var} [Z_H^n])^2}\sum_{\substack{K_0,K_0', L_0\in \mathcal K_H\\e_{K_0}=e_{K'_0}\geq1,\,e_{L_0}\geq1\\e_{K_0\cap K'_0}=e_{L_0\cap(K_0\cup K'_0)}=0}}\sum_{\substack{K,K'\subseteq K^*_n\\\mathcal{U}(K,K')=\mathcal{U}(K_0,K_0')}}\frac{Q_{K_0}Q_{K_0'}}{P_{K_0}P_{K_0'}}\\
&\hspace{20pt}\times\Bigg[\sum_{\substack{L\subseteq K^*_n\\\mathcal{U}(K,K',L)=\mathcal{U}(K_0,K_0',L_0)}}\frac{Q_{L_0}}{P_{L_0}}\(\sum_{M\subseteq K^*_n}\I _{K\cup L\cup M\simeq H}\)\(\sum_{M'\subseteq K^*_n}\I _{K'\cup L\cup M'\simeq H}\)\Bigg]^2\\[8pt]
&\lesssim\frac{P^4_H}{(\mathrm{Var} [Z_H^n])^2}\sum_{\substack{K_0,K_0', L_0\in \mathcal K_H\\e_{K_0}=e_{K'_0}\geq1,\,e_{L_0}\geq1\\e_{K_0\cap K'_0}=e_{L_0\cap(K_0\cup K'_0)}=0}}n^{v_{K_0\cup K_0'}}\frac{Q_{K_0}Q_{K_0'}}{P_{K_0}P_{K_0'}}\[n^{v_{L_0\backslash(K_0\cup K'_0)}}\frac{Q_{L_0}}{P_{L_0}}n^{v_H-v_{K_0\cup L_0}}n^{v_H-v_{K'_0\cup L_0}}\]^2\\
&=\frac{P^4_H}{(\mathrm{Var} [Z_H^n])^2}\sum_{\substack{K_0,K_0', L_0\in \mathcal K_H\\e_{K_0}=e_{K'_0}\geq1,\,e_{L_0}\geq1\\e_{K_0\cap K'_0}=e_{L_0\cap(K_0\cup K'_0)}=0}}\frac{Q_{K_0}Q_{K_0'}Q_{L_0}^2}{P_{K_0}P_{K_0'}P_{L_0}^2}\,n^{4v_H+v_{K_0\cup K_0'}+2v_{L_0\backslash(K_0\cup K'_0)}-2v_{K_0\cup L_0}-2v_{K'_0\cup L_0}}.
\end{align*}
It is easy to see from the Venn diagram that
$$v_{K_0\cup K_0'}+2v_{L_0\backslash(K_0\cup K'_0)}-2v_{K_0\cup L_0}-2v_{K'_0\cup L_0}\leq -v_{K_0}-v_{L_0}-v_{K'_0\cup L_0}.$$
Applying  the equalities $P_{K_0'}P_{L_0}=P_{K_0'\cup L_0}$ and $Q_{K_0'}Q_{L_0}=Q_{K_0'\cup L_0}$, which are true due to $E\(K_0'\cap L_0\)=\emptyset$, we get 
\begin{align}\nonumber
S_2&\lesssim\frac{P^4_Hn^{4v_H}}{(\mathrm{Var} [Z_H^n])^2}\sum_{\substack{K_0,K_0', L_0\in \mathcal K_H\\e_{K_0}=e_{K'_0}\geq1, e_{L_0}\geq1\\e_{K_0\cap K'_0}=e_{L_0\cap(K_0\cup K'_0)=0}}}\(\frac{Q_{K_0}}{P_{K_0}}n^{-v_{K_0}}\)\(\frac{Q_{L_0}}{P_{L_0}}n^{-v_{L_0}}\)\(\frac{Q_{K_0'\cup L_0}}{P_{K_0'\cup L_0}}n^{-v_{K_0'\cup L_0}}\)\\\label{eq:S_2}
&\lesssim\frac{P^4_Hn^{4v_H}}{(\mathrm{Var} [Z_H^n])^2}\max_{\substack{F\subseteq H\\e_F\geq1}}
\(\frac{Q_{F}}{P_F}n^{-v_{F}}\)^3.
\end{align}
In the case of the sum $S_3$ we proceed analogously. For $ A\cap B =\emptyset$ we have
\begin{align*}
\mathbb{E}[W_B W_{A\cup B}|\mathcal{F}_A]&=\frac{P^2_H}{{\mathrm{Var} [Z_H^n]}}  \prod_{j\in B}\sqrt{\frac{1-p_{|e_j|}}{p_{|e_j|}}} \prod_{j\in A\cup B}\sqrt{\frac{1-p_{|e_j|}}{p_{|e_j|}}} \prod_{j\in A}\widetilde X_{j}\\
		&\ \ \ \ \times\(\sum_{C:C\cap B =\emptyset}\I _{C\cup B \sim H}\)\(\sum_{C':C'\cap (A\cup B) =\emptyset}\I _{A\cup B\cup C' \sim H}\),
\end{align*}
and consequently 
\begin{align}\nonumber
S_3&=\frac{P^4_H}{(\mathrm{Var} [Z_H^n])^2}\sum_{1\leq l< i\leq e_H}\sum_{\substack{|A|=i-l}} \prod_{j\in A}{\frac{1-p_{|e_j|}}{p_{|e_j|}}}\\\nonumber
&\hspace{20pt}\times\[\sum_{\substack{|B|=l\\ B\cap A=\emptyset}} \prod_{k\in  B}{\frac{1-p_{|e_k|}}{p_{|e_k|}}}\(\sum_{C:C\cap B =\emptyset}\I _{B\cup C \sim H}\)\(\sum_{C':C'\cap (A\cup B) =\emptyset}\I _{ A\cup B\cup C' \sim H}\)\]^2\\\nonumber
&\lesssim\frac{P^4_H}{(\mathrm{Var} [Z_H^n])^2}\sum_{\substack{K_0, L_0\in \mathcal K_H\\e_{K_0}, e_{L_0}\geq1\\e_{L_0\cap K_0=0}}}\sum_{\substack{K\subseteq K^*_n\\K\simeq K_0}}\frac{Q_{K_0}}{P_{K_0}}\[\sum_{\substack{L\subseteq K^*_n\\\mathcal{U}(K,L)=\mathcal{U}(K_0,L_0)}}\frac{Q_{L_0}}{P_{L_0}}\(\sum_{\substack{M\subseteq K^*_n\\e_{M\cap K}=0}}\I _{l\cup  M\simeq H}\)\(\sum_{\substack{M'\subseteq K^*_n\\e_{M\cap (K\cup L)}=0 }}\I _{K\cup L\cup M'\simeq H}\)\]^2\\[8pt]\nonumber
&\lesssim\frac{P^4_H}{(\mathrm{Var} [Z_H^n])^2}\sum_{\substack{K_0, L_0\in \mathcal K_H\\e_{K_0}, e_{L_0}\geq1\\e_{L_0\cap K_0=0}}}n^{v_{K_0}}\frac{Q_{K_0}}{P_{K_0}}\[n^{v_{L_0\backslash K_0}}\frac{Q_{L_0}}{P_{L_0}}n^{v_H-v_{L_0}}n^{v_H-v_{K_0\cup L_0}}\]^2\\\nonumber
&=\frac{P^4_Hn^{4v_H}}{(\mathrm{Var} [Z_H^n])^2}\sum_{\substack{K_0, L_0\in \mathcal K_H\\e_{K_0}, e_{L_0}\geq1\\e_{L_0\cap K_0=0}}}\(\frac{Q_{K_0}}{P_{K_0}}n^{-v_{K_0}}\)\(\frac{Q_{L_0}}{P_{L_0}}n^{-v_{L_0}}\)^2\\\label{eq:S_3}
&\lesssim\frac{P^4_Hn^{4v_H}}{(\mathrm{Var} [Z_H^n])^2}\max_{\substack{F\subseteq H\\e_F\geq1}}
\(\frac{Q_{F}}{P_F}n^{-v_{F}}\)^3.
\end{align}

	Eventually, we turn our attention to the sum $S_1$. For $ A\cap B =\emptyset$ we have
	\begin{align*}
	\mathbb{E}[W_{A\cup B}^2|\mathcal{F}_A]&=\frac{P^2_H}{{\mathrm{Var} [Z_H^n]}}  \prod_{j\in A\cup B}{\frac{1-p_{|e_j|}}{p_{|e_j|}}} \prod_{j\in A}\(\widetilde X_{j}\)^2\(\sum_{C:C\cap (A\cup B) =\emptyset}\I _{A\cup B\cup C \sim H}\)^2.
	\end{align*}
Noting that for a random variable $X$ with Bernoulli distribution $\mathcal B(1, p)$ it holds that  $\E[\widetilde X^4]=[p^3+(1-p)^3]/[p(1-p)]\approx 1/[p(1-p)]$, we get 
	\begin{align*}
	S_1&=
	\frac{P^4_H}{{\(\mathrm{Var} [Z_H^n]\)^2}}\sum_{0\leq l< i\leq e_H}\sum_{|A|=i-l}\( \prod_{j\in A}{\frac{1-p_{|e_j|}}{p_{|e_j|}}}\)^2  \prod_{j\in A}\mathbb{E}[\widetilde X_{j}^4]\\
	&\hspace{140pt}\times\[\sum_{|B|=l, B\cap A=\emptyset} \prod_{j\in B}{\frac{1-p_{|e_j|}}{p_{|e_j|}}} \(\sum_{C:C\cap (A\cup B) =\emptyset}\I _{A\cup B\cup C \sim H}\)^2\]^2\\
	&\lesssim\frac{P^4_H}{(\mathrm{Var} [Z_H^n])^2}\sum_{\substack{K_0, L_0\in \mathcal K_H\\e_{K_0}\geq1\\e_{L_0\cap K_0}=0}}\sum_{\substack{K\subseteq K^*_n\\K\simeq K_0}}\frac{\(\frac{Q_{K_0}}{P_{K_0}}\)^2}{P_{K_0}Q_{K_0}}\[\sum_{\substack{L\subseteq K^*_n\\\mathcal{U}(K,L)=\mathcal{U}(K_0,L_0)}}\frac{Q_{L_0}}{P_{L_0}}\Bigg(\sum_{\substack{M\subseteq K^*_n\\e_{M\cap (K\cup L)}=0}}\hspace{-10pt}\I _{K\cup L\cup M\simeq H}\Bigg)^2\,\]^2\\[8pt]
		&\lesssim\frac{P^4_H}{(\mathrm{Var} [Z_H^n])^2}\sum_{\substack{K_0, L_0\in \mathcal K_H\\e_{K_0}\geq1\\e_{L_0\cap K_0}=0}}n^{v_{K_0}}\frac{Q_{K_0}}{P^3_{K_0}}\[n^{v_{L_0\backslash K_0}}\frac{Q_{L_0}}{P_{L_0}}\(n^{v_H-v_{K_0\cup L_0}}\)^2\]^2\\
		&\approx \frac{P^4_Hn^{4v_H}}{(\mathrm{Var} [Z_H^n])^2}\max_{\substack{K_0, L_0\in \mathcal K_H\\e_{K_0}\geq1\\e_{L_0\cap K_0}=0}}\frac{Q_{K_0}}{P^3_{K_0}}\(\frac{Q_{L_0}}{P_{L_0}}\)^2n^{-v_{K_0}-2v_{K_0\cup L_0}}.
	\end{align*}
	Since $Q_{K_0}$ and $Q_{L_0}$ appear in the numerators, one can see that the maximum does not change if we narrow the range of graphs to the ones with minimal number of edges whose existence probability is greater than $1/2$. This allows us to consider subhypergraphs of $H$ with no such an edge (denoted by $\mathcal K_H^0$) or the ones consisting of exactly one edge of that type ($\mathcal K_H^1$). Such an argument has been already brought up when discussing variance of $Z_H^n$, see Remark \ref{rem:var}. Thus, we get 
	\begin{align}
	S_1&\lesssim  \frac{P^4_Hn^{4v_H}}{(\mathrm{Var} [Z_H^n])^2}\bigg(\max_{\substack{K_0, L_0\subseteq \mathcal K_H^0\\e_{K_0}\geq1\\e_{L_0\cap K_0=0}}}\(\frac{Q_{K_0}}{P_{K_0}}n^{-v_{K_0}}\)\(\frac{Q_{K_0\cup L_0}}{P_{K_0\cup L_0}}n^{-v_{K_0\cup L_0}}\)^2\frac1{Q_{K_0}^2}+\max_{\substack{K_0\subseteq \mathcal K_H^1\\e_{K_0}\geq1}}\frac{Q_{K_0}}{P^3_{K_0}}n^{-3v_{K_0}}\bigg)\\\label{eq:S_1}
	&\lesssim  \frac{P^4_Hn^{4v_H}}{(\mathrm{Var} [Z_H^n])^2}\bigg(\max_{\substack{F\subseteq H\\e_F\geq1}}\(\frac{Q_{F}}{P_F}n^{-v_{F}}\)^3+\max_{\substack{e\in E(H)\\ p_{|e|}>1/2}}{(1-p_{|e|})}n^{-3|e|}\bigg).
	\end{align}
	Applying \eqref{eq:S_2}, \eqref{eq:S_3} and \eqref{eq:S_1} to \eqref{eq:priv2022}, we get 
	\begin{align*}
	d_{W/K}(\widetilde{Z}_H^n, \mathcal{N})&\lesssim \(\max_{\substack{F\subseteq H\\e_F\geq1}}\(\frac{Q_{F}}{P_F}n^{-v_{F}}\)+\frac{\max_{\substack{e\in E(H)\\ p_{|e|}>1/2}}{(1-p_{|e|})}n^{-3|e|}}{\max_{\substack{F\subseteq H\\e_F\geq1}}\(\frac{Q_{F}}{P_F}n^{-v_{F}}\)^2}\)^{1/2}\\[8pt]
	&\leq\bigg({ \min\limits_{\substack{F\subseteq H\\e_F\geq1}}\frac{P_F}{Q_F}\,n^{v_{F}}}\bigg)^{-1/2}+{ \min\limits_{\substack{F\subseteq H\\e_F\geq1}}\frac{P_F}{Q_F}\,n^{v_{F}}}{\bigg( \max\limits_{\substack{e\in E(H)\\p_{|e|}>1/2}}\frac{1-p_{|e|}}{n^{3|e|}}\bigg)^{1/2}},
	\end{align*}
	which  is the bound \eqref{eq:sufficient1}. Let us now focus on the other one. First, since  $Q_F\in(0,1)$, we simply estimate
	$$\({ \min\limits_{\substack{F\subseteq H:\,e_F\geq1}}\frac{P_F}{Q_F}\,n^{v_{F}}}\)^{-1/2}\leq \({ \min\limits_{\substack{F\subseteq H:\,e_F\geq1}}{P_F}\,n^{v_{F}}}\)^{-1/2}.$$
	Next, let $\bar e\in E(H)$ be such that $p_{|\bar e|}>1/2$ and  $ \min\limits_{\substack{e\in E(H):\, p_{|e|}>1/2}}\frac{n^{3|e|}}{(1-p_{|e|})}=\frac{n^{3|\bar e|}}{(1-p_{|\bar e|})}$. Then we get 
	\begin{align*}
	&\frac{ \min\limits_{\substack{F\subseteq H:\,e_F\geq1}}\frac{P_F}{Q_F}\,n^{v_{F}}}{\Big( \min\limits_{\substack{e\in E(H):\, p_{|e|}>1/2}}\frac{n^{3|e|}}{(1-p_{|e|})}\Big)^{1/2}}\\
	&\leq  	\frac{\frac{1}{1-p_{|\bar e|}}\,n^{|\bar e|}}{\(\frac{n^{3|\bar e|}}{(1-p_{|\bar e|})}\)^{1/2}}=\frac{1}{\((1-p_{|\bar e|}){n^{|\bar e|}}\)^{1/2}}\leq \frac1{\( \min\limits_{\substack{e\in E(H):\, p_{|e|}>1/2}}(1-p_{|e|}){n^{|e|}}\)^{1/2}}.
	\end{align*}
	This completes the proof.
	\end{proof}
%
%
	\begin{proof}[\textbf{Proof of Theorem \ref{theo:mainbounds}}] We start with the bound \eqref{aux8}, as it directly follows from Theorem \ref{theo:sufficient}. Indeed, the assumption  $p_{|e|}<c<1$ implies $(1-p_{|e|})\gtrsim p_{|e|}$ for all $e\in E(H)$, and therefore  \eqref{eq:sufficient2} gives us 
		 \begin{align*}
		 d_{W/K}(\widetilde{Z}_H^n, \mathcal{N})&\lesssim\({ \min\limits_{\substack{F\subseteq H:\,e_F\geq1}}P_F\,n^{v_{F}}}\)^{-1/2}+\( \min\limits_{\substack{e\in E(H):\, p_{|e|}>1/2}}p_{|e|}{n^{|e|}}\)^{-1/2}\\
		 &\leq2\({ \min\limits_{\substack{F\subseteq H:\,e_F\geq1}}P_F\,n^{v_{F}}}\)^{-1/2}.
		 \end{align*}

We pass to the proof of the other two bounds of the theorem. For any $\varepsilon(0,1/2))$ we can decompose the hypergraph  $H$ as
		\begin{align}\label{eq:H^}
		H=H^{\leq 1-\varepsilon}_n\cup H^{>1-\varepsilon}_n, 
		\end{align}
		where the hypergraphs $H^{\leq 1-\varepsilon}_n$ and $H^{>1-\varepsilon}_n$  share no common edge (i.e. $E({H_n^{\leq1-\varepsilon}\cap H_n^{>1-\varepsilon}})=\emptyset$), contain no isolated vertices and
		
		\begin{itemize}
			\item $p_{|e|} \leq 1-\varepsilon$ whenever $e\in E(H^{\leq 1-\varepsilon}_n)$,
			\item $p_{|e|} > 1-\varepsilon$ whenever $e\in E(H^{>1-\varepsilon}_n)$.
		\end{itemize}
		Thus, using Proposition \ref{prop:subHoeff} we  represent $\widetilde{Z}_H^n$ as 
		\begin{align}\label{eq:decompZa}
		\widetilde{Z}_H^n&=\sum_{F\subseteq H^{\leq 1-\varepsilon}_n}I_F+\sum_{e\in E(H^{>1-\varepsilon}_n)}I_{\{e\}}+\sum_{\substack{F\subseteq H:\,e_F\geq2\\ E({F\cap H^{>1-\varepsilon}_n})\neq\emptyset}}I_F\\
		&=\alpha_n(H^{\leq 1-\varepsilon}_n,H)\widetilde Z_{H^{\leq 1-\varepsilon}_n}^n+\sum_{e\in E(H^{>1-\varepsilon}_n)}\alpha_n(\{e\},H)\widetilde Z_{\{e\}}^n+\sum_{\substack{F\subseteq H:\,e_F\geq2\\ E({F\cap H^{>1-\varepsilon}_n})\neq\emptyset}}I_F.
		\end{align}
For any $F\subseteq H$ such that $e_{F}\geq2$ and $ E({F\cap H^{>1-\varepsilon}_n})\neq\emptyset$ we have 
\begin{align*}
\mathrm{Var}[I_F]&\approx \frac{P_H^2}{\mathrm{Var} [Z_H^n]}
			n^{2v_H-v_F}\frac{Q_F}{P_F}\leq\(\frac{P_H^2}{\mathrm{Var} [Z_H^n]}
			n^{2v_H-v_{F\cap H^{\leq 1-\varepsilon}_n}}\frac{Q_{F\cap H^{\leq 1-\varepsilon}_n}}{P_{F\cap H^{\leq 1-\varepsilon}_n}}\)\frac{Q_{F\cap H^{>1-\varepsilon}_n}}{P_{F\cap H^{>1-\varepsilon}_n}}\\
			&\leq\(\frac{P_H^2}{\mathrm{Var} [Z_H^n]}
			\max_{F'\subseteq H: e_{F'}\geq1}n^{2v_H-v_{F'}}\frac{Q_{F'}}{P_{F'}}\)\frac{\varepsilon}{(1-\varepsilon)^{e_H}}\lesssim \varepsilon, 
\end{align*}
		which implies
		\begin{align*}
		\mathrm{Var}\bigg[\sum_{\substack{F\subseteq H:\,e_F\geq2\\ E({F\cap H^{>1-\varepsilon}_n})\neq\emptyset}}I_F\bigg]\lesssim \varepsilon.
		\end{align*}
		Thus, since the variables $\widetilde Z_{H^{\leq 1-\varepsilon}_n}^n$ and $\widetilde Z_{\{e\}}^n$, $e\in E(H^{>1-\varepsilon}_n)$,  are all independent { for all $n\in\N$},   Proposition \ref{prop:independent} gives us 
		\begin{align}\label{eq:aux6}
		d_{\Theta}(\widetilde{Z}_H^n,\,\mathcal N)&\lesssim \bigg([\alpha_n(H^{\leq 1-\varepsilon}_n,H)]^{2\gamma}\wedge d_{\Theta}(\widetilde Z_{H^{\leq 1-\varepsilon}_n}^n,\,\mathcal N)\bigg)+\varepsilon^{\gamma}\\[2pt]
		&\ \ \ \ +\sum_{e\in E(H^{> 1-\varepsilon}_n)}\bigg([\alpha_n(\{e\},H)]^{2\gamma}\wedge d_{\Theta}(\widetilde Z_{\{e\}}^n,\,\mathcal N)\bigg),
		\end{align}
		where $\Theta=W, \gamma=1/2$ for the Wasserstein distance and $\Theta=K, \gamma=1/3$ in the case of the Kolmogorov distance. By virtue of \eqref{eq:sufficient2} we can bound the first term above as follows
		\begin{align*}
		&[\alpha_n(H^{\leq 1-\varepsilon}_n,H)]^{2\gamma}\wedge d_{\Theta}(\widetilde Z_{H^{\leq 1-\varepsilon}_n}^n,\,\mathcal N)\leq d_{\Theta}(\widetilde Z_{H^{\leq 1-\varepsilon}_n}^n,\,\mathcal N)\\ &\lesssim \({ \min\limits_{\substack{F\subseteq H^{\leq 1-\varepsilon}_n:\,e_F\geq1}}{P_F}\,n^{v_{F}}}\)^{-1/2}+\( \min\limits_{\substack{e\in E(H^{\leq 1-\varepsilon}_n):\, p_{|e|}>1/2}}(1-p_{|e|}){n^{|e|}}\)^{-1/2}\\
&\leq \({ \min\limits_{\substack{F\subseteq H^{\leq 1-\varepsilon}_n:\,e_F\geq1}}{P_F}\,n^{v_{F}}}\)^{-1/2}+\(\varepsilon  \min\limits_{\substack{e\in E(H^{\leq 1-\varepsilon}_n)}}{n^{|e|}}\)^{-1/2}\\
&\leq \frac2{\sqrt \varepsilon }\({ \min\limits_{\substack{F\subseteq H:\,e_F\geq1}}{P_F}\,n^{v_{F}}}\)^{-1/2}.
		\end{align*}
		Furthermore, by \eqref{eq:sufficient2} and \eqref{eq:estalpha}, for any $e\in E(H^{> 1-\varepsilon}_n)$ 
		we have 
		\begin{align*}
		&[\alpha_n(\{e\},H)]^{2\gamma}\wedge d_{K/W}(\widetilde Z_{\{e\}}^n,\,\mathcal N)\\
		&\lesssim \(\frac{n^{-|e|} (1-p_{|e|})}{\max_{F\subseteq H:\,e_F\geq1} n^{-v_F}Q_F/P_F}\)^{\gamma}\wedge \(\frac1{(1-p_{|e|})n^{|e|}}\)^{1/2}.
		\end{align*}
		Applying last two estimates to \eqref{eq:aux6} and extending the sum therein to all edges of $H$, we arrive at
		\begin{align*}
		d_{\Theta}(\widetilde{Z}_H^n,\,\mathcal N)&\lesssim \frac2{\sqrt \varepsilon }\({ \min\limits_{\substack{F\subseteq H:\,e_F\geq1}}{P_F}\,n^{v_{F}}}\)^{-1/2}+\varepsilon^{\gamma}\\
		&\ \ \ \ +\sum_{e\in E(H)}\(\frac{n^{-|e|} (1-p_{|e|})}{\max_{F\subseteq H:\,e_F\geq1} n^{-v_F}Q_F/P_F}\)^{\gamma}\wedge \(\frac1{(1-p_{|e|})n^{|e|}}\)^{1/2}.
		\end{align*}
		Taking $\varepsilon = \({ \min\limits_{\substack{F\subseteq H:\,e_F\geq1}}{P_F}\,n^{v_{F}}}\)^{-1/(2\gamma+1)} $ we obtain the first bounds form the assertion of the theorem.
	\end{proof}

	The rest of this section is devoted to comparison of the obtained bounds with the ones that follow from \cite{JLR} for the distance $d_1$. The approach in this book is a modification of the one from the article \cite{BKR}. Furthermore, a version for Kolmogorov distance has been established in \cite{ER}.

	\begin{definition}[{\cite[p. 11]{JLR}}]
		Let $\{X_i\}_{i\in \mathcal{I}}$ be a family of random variables on a common probability space. A dependency graph for $\{X_i\}_{i\in \mathcal{I}}$ is any graph $L$ with vertex set $V(L)=\mathcal{I}$ such that if $A$ and $B$ are two disjoint subsets of $\mathcal{I}$ with $$e_L(A, B) \defeq |\{\{a, b\}\in E(L) : a\in A, b\in B\}| = 0,$$ then
		the families $\{X_i\}_{i\in A}$ i $\{X_i\}_{i\in B}$ are mutually independent. In general, a dependency graph is not unique.
	\end{definition}
	
	Furthermore, by  $\bar{N}_{G}(v_1, \ldots, v_r)$ we denote the closed neighbourhood of vertices $v_1, \ldots, v_r\in V(G)$ in a simple graph $G$, i.e.
	$$\bar{N}_{G}(v_1, \ldots, v_r)=\big\{v\in V(G):\exists _{i\in[r]}\{v,v_i\}\in E(G)\big\}\cup\{v_1,\ldots,v_r\}.$$

%
%

	\begin{theorem}[{\cite[Theorem 6.33]{JLR}}]\label{theo:chen_like_method}
		Suppose  $(S_n)_n$ is a sequence of random variables such that $S_n = \sum_{\alpha\in \mathcal{I}_n} X_{n, \alpha}$,  where for each $n$, $\{X_{n, \alpha}:\alpha\in \mathcal I_n\}$ is a family of random variables with dependency graph $L_n$. If there exist numbers $M_n$, $Q_n$ such that 
		$$\sum_{\alpha\in \mathcal{I}_n}\mathbb{E}|X_{n, \alpha}|\leq M_n,$$
		and for every $\alpha_1, \alpha_2 \in \mathcal{I}_n$,
		$$\sum_{\alpha\in \bar{N}_{L_n}(\alpha_1, \alpha_2)} \mathbb{E}\left[|X_{n, \alpha}|\big| X_{n, \alpha_1}, X_{n, \alpha_2}\right]\leq Q_n,$$
		then $$d_1(\widetilde{S}_n, \mathcal{N})\lesssim \frac{M_n Q_n^2}{(\mathrm{Var}[S_n])^{3/2}}.$$
	\end{theorem}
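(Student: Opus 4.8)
The plan is to run Stein's method of normal approximation, exploiting the local dependence carried by $L_n$. Fix $n$ and write $\sigma^2=\Var[S_n]$, $\bar X_\alpha=X_{n,\alpha}-\E[X_{n,\alpha}]$ and $\xi_\alpha=\bar X_\alpha/\sigma$, so that $W:=\widetilde S_n=\sum_{\alpha\in\mathcal I_n}\xi_\alpha$ satisfies $\E[\xi_\alpha]=0$ and $\E[W^2]=1$. For $h\in\mathcal L$ let $f=f_h$ solve the Stein equation $f'(w)-wf(w)=h(w)-\E[h(\mathcal N)]$; the classical estimates give $\|f\|_\infty,\|f'\|_\infty,\|f''\|_\infty\lesssim\|h\|_\infty+\|h'\|_\infty\leq1$. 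Since $\E[h(W)]-\E[h(\mathcal N)]=\E[f'(W)-Wf(W)]$, it suffices to bound $|\E[f'(W)-Wf(W)]|$ uniformly over $h\in\mathcal L$.

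For each $\alpha$ I would put $N_\alpha=\bar N_{L_n}(\alpha)$ and $W_\alpha=\sum_{\beta\in N_\alpha}\xi_\beta$. By the defining property of the dependency graph $\xi_\alpha$ is independent of $W-W_\alpha$, so $\E[\xi_\alpha f(W-W_\alpha)]=0$ by centering, and from $\E[Wf(W)]=\sum_\alpha\E[\xi_\alpha f(W)]$ one obtains
$$\E[Wf(W)]=\sum_{\alpha}\E\big[\xi_\alpha\big(f(W)-f(W-W_\alpha)\big)\big].$$
Moreover $\E[\xi_\alpha\xi_\beta]=0$ whenever $\beta\notin N_\alpha$, so the normalization yields $\sum_\alpha\E[\xi_\alpha W_\alpha]=\E[W^2]=1$, and hence $\E[f'(W)]=\sum_\alpha\E[f'(W)]\,\E[\xi_\alpha W_\alpha]$. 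Next I would Taylor expand $f(W)-f(W-W_\alpha)=W_\alpha f'(W)+\rho_\alpha$ with integral remainder obeying $|\rho_\alpha|\le\tfrac12\|f''\|_\infty W_\alpha^2$. Subtracting the two previous displays then gives
$$\E[f'(W)]-\E[Wf(W)]=-\sum_\alpha\mathrm{Cov}\big(\xi_\alpha W_\alpha,\,f'(W)\big)-\sum_\alpha\E[\xi_\alpha\rho_\alpha],$$
so the task is reduced to controlling the covariances together with the remainder $\sum_\alpha\E[|\xi_\alpha|W_\alpha^2]$.

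The key step is to handle the covariance by splitting $\xi_\alpha W_\alpha=\sum_{\beta\in N_\alpha}\xi_\alpha\xi_\beta$, so that $\mathrm{Cov}(\xi_\alpha W_\alpha,f'(W))=\sum_{\beta\in N_\alpha}\mathrm{Cov}(\xi_\alpha\xi_\beta,f'(W))$. Each product $\xi_\alpha\xi_\beta$ is measurable with respect to $(X_{n,\alpha},X_{n,\beta})$, while $W-\zeta_{\alpha\beta}$, where $\zeta_{\alpha\beta}=\sum_{\gamma\in\bar N_{L_n}(\alpha,\beta)}\xi_\gamma$, is independent of that pair; hence replacing $f'(W)$ by $f'(W-\zeta_{\alpha\beta})$ annihilates the covariance up to an error $\lesssim\|f''\|_\infty(\E[|\xi_\alpha\xi_\beta\zeta_{\alpha\beta}|]+\E|\xi_\alpha\xi_\beta|\,\E|\zeta_{\alpha\beta}|)$. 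Expanding $W_\alpha^2=\sum_{\beta,\gamma\in N_\alpha}\xi_\beta\xi_\gamma$ in the remainder likewise, every surviving term has the form $\sigma^{-3}\sum\E[|\bar X_\alpha||\bar X_\beta||\bar X_\gamma|]$ with $\gamma$ ranging over the pair-neighbourhood $\bar N_{L_n}(\alpha,\beta)$. I would then peel these sums exactly as the hypotheses prescribe: conditioning on $X_{n,\alpha},X_{n,\beta}$ and summing $\gamma$ over $\bar N_{L_n}(\alpha,\beta)$ produces a factor $\lesssim Q_n$; conditioning on $X_{n,\alpha}$ and summing $\beta$ over $N_\alpha\subseteq\bar N_{L_n}(\alpha,\alpha)$ produces a second factor $\lesssim Q_n$; and the outer sum obeys $\sum_\alpha\E|\bar X_\alpha|\lesssim M_n$ (centering only affects constants, since $\E|\bar X_\alpha|\le2\E|X_{n,\alpha}|$ and the conditional bounds transfer the same way). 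Together with $\|f''\|_\infty\lesssim1$ this yields $|\E[f'(W)-Wf(W)]|\lesssim M_nQ_n^2/\sigma^3=M_nQ_n^2/(\Var[S_n])^{3/2}$, as claimed.

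The main obstacle is precisely this organization. One must split $\xi_\alpha W_\alpha$ into individual pairs $\xi_\alpha\xi_\beta$ \emph{before} forming the covariance, so that the independent complement is governed by the pair-neighbourhood $\bar N_{L_n}(\alpha,\beta)$ appearing in the hypothesis rather than by a second-order neighbourhood that the hypothesis does not control. Verifying the exact cancellation of the leading covariance term through independence of $\xi_\alpha$ from $W-W_\alpha$, and routing each nested sum so that every peeling step costs exactly one factor $Q_n$ (instead of an uncontrolled degree-dependent quantity), is the delicate part of the bookkeeping.
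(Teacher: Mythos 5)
This statement is not proved in the paper at all: it is imported verbatim by citation as \cite[Theorem 6.33]{JLR}, so there is no in-paper proof to compare against; the relevant benchmark is the Stein's-method argument behind the cited result (originating in Barbour--Karo\'nski--Ruci\'nski). Your proposal is a correct, self-contained implementation of exactly that route, and all the steps check out. The structure is sound: the identity $\E[Wf(W)]=\sum_\alpha\E[\xi_\alpha(f(W)-f(W-W_\alpha))]$ uses the dependency-graph property with $A=\{\alpha\}$, $B=\mathcal I_n\setminus \bar N_{L_n}(\alpha)$; the normalization $\sum_\alpha\E[\xi_\alpha W_\alpha]=\E[W^2]=1$ uses $\E[\xi_\alpha\xi_\beta]=0$ for $\beta\notin\bar N_{L_n}(\alpha)$; and your key organizational point is indeed the crux --- splitting $\xi_\alpha W_\alpha$ into pairs $\xi_\alpha\xi_\beta$ \emph{before} decoupling, so that the independent complement is indexed by the pair-neighbourhood $\bar N_{L_n}(\alpha,\beta)$, which is precisely what the hypothesis on $Q_n$ controls (a second-order neighbourhood would not be). Two details that could have been gaps are handled correctly: (i) the hypothesis bounds conditional sums of the \emph{uncentered} $|X_{n,\gamma}|$ while the argument runs on centered variables, but taking expectations in the hypothesis gives $\sum_{\gamma\in\bar N_{L_n}(\alpha_1,\alpha_2)}\E|X_{n,\gamma}|\le Q_n$, whence $\sum_{\gamma\in\bar N_{L_n}(\alpha_1,\alpha_2)}\E[|\bar X_\gamma|\mid X_{n,\alpha_1}, X_{n,\alpha_2}]\le 2Q_n$ almost surely, so centering costs only absolute constants as you assert; (ii) the case $\alpha_1=\alpha_2=\alpha$ of the hypothesis is what lets you peel the sum over $\beta\in\bar N_{L_n}(\alpha)$ at cost $Q_n$, and for the remainder term the inclusion $\bar N_{L_n}(\alpha)\subseteq\bar N_{L_n}(\alpha,\beta)$ makes the sum over $\gamma$ admissible. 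Together with the standard Stein bounds $\|f''\|_\infty\le 2\|h'\|_\infty\le 2$ for $h\in\mathcal L$, every constant in your chain is absolute, so the final estimate $d_1(\widetilde S_n,\mathcal N)\lesssim M_nQ_n^2/(\Var[S_n])^{3/2}$ follows with an implied constant independent of $n$, as required.
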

	
	Generalizing  the argument from Example 6.19 in \cite{JLR}, we prove the following bound.
	\begin{theorem}\label{theo:JKR_bound}
		For arbitrary an hypergraph $H$, we have
		$$d_1(\widetilde{Z}_H^n, \mathcal{N})\lesssim (1-P_H)\frac{\big(\min_{F\subseteq H, e_F \geq1} \frac{n^{v_F}P_F}{1 - P_F }\big)^{3/2}}{(\min_{F\subseteq H, e_F \geq 1}n^{v_F}P_F)^2}.$$
	\end{theorem}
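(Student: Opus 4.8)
The plan is to apply Theorem \ref{theo:chen_like_method} to the centered representation $Z_H^n-\E[Z_H^n]=\sum_{H'\simeq H}Y_{H'}$, where the index $\alpha$ runs over all copies $H'\subseteq K_n^*$ of $H$ and $X_{n,\alpha}=Y_{H'}=\I_{H'}-\E[\I_{H'}]$ with $\E[\I_{H'}]=P_{H'}=P_H$. A natural dependency graph $L_n$ takes these copies as vertices and joins $H_1'$ and $H_2'$ precisely when they share at least one common edge: since each $\I_{H'}$ is a product of independent edge-indicators $X_e$, two families indexed by edge-disjoint sets of copies are mutually independent. Because standardization is insensitive to centering, $\widetilde{S}_n=\widetilde{Z}_H^n$ and $\Var[S_n]=\Var[Z_H^n]$, so the task reduces to exhibiting admissible $M_n,Q_n$ and then feeding in the variance estimate \eqref{eq:VarZ}.

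For $M_n$ I would use $\E|Y_{H'}|=\E|\I_{H'}-P_H|=2P_H(1-P_H)$, so that $\sum_{H'}\E|Y_{H'}|=2N_H^nP_H(1-P_H)\approx n^{v_H}P_H(1-P_H)=:M_n$. This is exactly the step where centering matters: it is what produces the factor $1-P_H$ in the final bound, whereas working with $\I_{H'}$ would give $M_n\approx n^{v_H}P_H$ and lose it.

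For $Q_n$ I would bound $\E[|Y_{H'}|\mid Y_{H_1'},Y_{H_2'}]$ over the closed neighbourhood $\bar N_{L_n}(H_1',H_2')$. Writing $|Y_{H'}|=P_H+(1-2P_H)\I_{H'}$ and setting $\mathcal G=\sigma(\I_{H_1'},\I_{H_2'})$ gives $\E[|Y_{H'}|\mid\mathcal G]=P_H+(1-2P_H)\E[\I_{H'}\mid\mathcal G]$. The edges of $H'$ outside $H_1'\cup H_2'$ are independent of $\mathcal G$, so if the overlap of $H'$ with $H_1'\cup H_2'$ is isomorphic to some $F\subseteq H$ with $e_F\geq1$, then $\E[\I_{H'}\mid\mathcal G]\leq P_H/P_F$; checking the regimes $P_H\leq1/2$ and $P_H>1/2$ (where $1-2P_H<0$) separately yields $\E[|Y_{H'}|\mid\mathcal G]\lesssim P_H/P_F$ in both. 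Grouping the copies of $\bar N_{L_n}(H_1',H_2')$ by the isomorphism type $F$ of this overlap, and noting there are $\lesssim n^{v_H-v_F}$ copies realising a given overlap (it sits inside $H_1'\cup H_2'$ in boundedly many ways and extends over $v_H-v_F$ fresh vertices), I would obtain
$$Q_n\lesssim\sum_{F\subseteq H:\,e_F\geq1}n^{v_H-v_F}\,\frac{P_H}{P_F}\approx\frac{P_H n^{v_H}}{\min_{F\subseteq H:\,e_F\geq1}n^{v_F}P_F}.$$

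Finally I would combine these inputs. Substituting $M_n\approx n^{v_H}P_H(1-P_H)$, the above $Q_n$, and $\Var[Z_H^n]\approx P_H^2\max_{F\subseteq H,\,e_F\geq1}n^{2v_H-v_F}\frac{1-P_F}{P_F}$ into $d_1\lesssim M_nQ_n^2/(\Var[Z_H^n])^{3/2}$, the powers of $P_H$ and $n^{v_H}$ cancel and, rewriting $\max_F\frac{1-P_F}{n^{v_F}P_F}=(\min_F\frac{n^{v_F}P_F}{1-P_F})^{-1}$, one lands on the claimed estimate. The main obstacle I anticipate is the third step: making the conditional-expectation estimate and the overlap-counting clean enough that $Q_n$ picks up \emph{no} spurious factor of $1-P_H$, so that the final exponent of $1-P_H$ is exactly one, together with the separate treatment of the $P_H>1/2$ regime.
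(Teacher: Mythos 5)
Your proposal is correct and follows essentially the same route as the paper's proof: the same dependency graph on edge-overlapping copies, the same centered variables giving $M_n\approx n^{v_H}P_H(1-P_H)$, the same overlap-type counting yielding $Q_n\approx n^{v_H}P_H/\min_{F\subseteq H,\,e_F\geq1}n^{v_F}P_F$, and the same substitution into Theorem \ref{theo:chen_like_method} together with \eqref{eq:VarZ}. The only (cosmetic) difference is how the conditional expectation is bounded: you use the exact identity $|Y_{H'}|=P_H+(1-2P_H)\I_{H'}$ with a case split at $P_H=1/2$, whereas the paper compares $\E[\I_{H_\alpha}\mid \I_{H_{\alpha_1}},\I_{H_{\alpha_2}}]$ to the conditioning event $\{\I_{H_{\alpha_1}}=\I_{H_{\alpha_2}}=1\}$ at the cost of a constant factor; both yield the same $\lesssim P_H/P_F$ estimate.
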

	\begin{proof}
		The family of subhypergraphs $H_\alpha$ of $K_n^*$ which are isomorphic to $H$ will be denoted by $\{H_{\alpha}\}_{\alpha\in \mathcal{I}_n}$ with $\mathcal{I}_n=[N_H^n]$. 
		For the family 
		$$\left\{Y_\alpha=\I _{\left\{ H_\alpha\subseteq \mathbb{H}(n, {p}) \right\}}-\E\[\I _{\left\{ H_\alpha \subseteq \mathbb{H}(n, {p}) \right\}}\]\right\}_{\alpha\in \mathcal I_n}$$
		we define a natural dependency graph $L_n$ as $(\mathcal I_n, E(L_n))$, where
		$$\{i,j\}\in E(L_n)\Leftrightarrow E(H_i)\cap E(H_j)\neq \emptyset.$$
		Notice that sets of variables $\{Y_\alpha\}_{\alpha\in A}$ and $\{Y_\alpha\}_{\alpha\in B}$ for disjoint $A, B\subseteq \mathcal{I}$ are independent if and only if  $e_L(A, B) = 0$, as they are built on disjoint sets of edges.
		Our intention is to apply Theorem \ref{theo:chen_like_method}. 
		First, note that
		\begin{equation}
			\begin{aligned}
				{\E}|Y_{{\alpha}}| 
				&=|1-P_H|P_H + 
				|0-P_H|(1-P_H)= 2 P_H (1-P_H),
			\end{aligned}
		\end{equation}
	and hence
		$$\sum_{\alpha\in \mathcal{I}_n}{\E}|Y_{{\alpha}}| = 2  P_H (1-P_H) |\mathcal{I}_n|\approx P_H (1-P_H) n^{v_H}.$$
		We therefore  set $M_n = P_H (1-P_H) n^{v_H}.$
		
		Next, fix $\alpha_1, \alpha_2 \in \mathcal{I}_n$ and observe that for all $\alpha\in \mathcal{I}_n$
		\begin{equation}
			\begin{aligned}
			{\E}\left[|Y_{{\alpha}}| \big| Y_{{\alpha_1}}, Y_{{\alpha_2}}\right]&\leq {\E}\left[\I_{\alpha} \big| \I_{H_{\alpha_1}}, \I_{H_{\alpha_2}}\right] + \E[\I_{H_\alpha}]\\
				&\leq 4 {\E}\left[\I_{H_\alpha} \mid \I_{H_{\alpha_1}}=1, \I_{H_{\alpha_2}}=1\right] + \E[\I_{H_\alpha}]\\
				&= 4 \mathbb{P}\(\I_{H_\alpha}=1 \mid \I_{H_{\alpha_1}}=1, \I_{H_{\alpha_2}}=1\) +P_{H_\alpha}\\
				&= 4 P_H/P_{H_\alpha\cap(H_{\alpha_1}\cup H_{\alpha_2})} + P_H\leq 5P_H/P_{H_\alpha\cap(H_{\alpha_1}\cup H_{\alpha_2})},
			\end{aligned}
		\end{equation}
		where $\I_{H_\alpha}=\I_{\{H_\alpha\subseteq \rh(n,  \p )\}}$.
	For every subgraph $G\subseteq H_{\alpha_1}\cup H_{\alpha_2}$ there are $O( n^{v_H-v_F})$ choices of such $\alpha$ that $H_\alpha\cap(H_{\alpha_1}\cup H_{\alpha_2}) = G$.
	Since every hypergraph $H_\alpha\cap(H_{\alpha_1}\cup H_{\alpha_2})$ is isomorphic to some subhypergraph of  $H$, 
	\begin{equation}
		\begin{aligned}
			\sum_{\alpha\in \bar{N}_{L_n}(\alpha_1, \alpha_2)} \mathbb{E}\left(|Y_{{\alpha}}|\mid Y_{{\alpha_1}}, Y_{{\alpha_2}}\right)&\leq 5\sum_{\alpha\in \bar{N}_{L_n}(\alpha_1, \alpha_2)} P_H/P_{{H_\alpha\cap(H_{\alpha_1}\cup H_{\alpha_2})}}\\
			&\lesssim\max_{F\subseteq H, e_F\geq 1} \frac{n^{v_H}P_H}{n^{v_F}P_F} = \frac{n^{v_H}P_H}{\min_{F\subseteq H, e_F \geq 1}n^{v_F}P_F}.
		\end{aligned}
	\end{equation}
	Denoting the last expression by $Q_n$,  Theorem \ref{theo:chen_like_method} and Proposition \ref{theo:VarI_m} give us
	\begin{equation}
		\begin{aligned}
		d_1(\widetilde{Z}_H^n, \mathcal{N})&\lesssim \frac{M_n Q_n^2}{(\mathrm{Var}[Z_H^n])^{3/2}}\approx\frac{P_H (1-P_H) n^{v_H} \big(\frac{n^{v_H}P_H}{\min_{F\subseteq H, e_F \geq1}n^{v_F}P_F}\big)^2}{\big(P_H^2\max_{F\subseteq H, e_F \geq1} n^{2v_H-v_F} \frac{1 - P_F }{P_F}\big)^{3/2}}\\[5pt]
		&= (1-P_H)\frac{\big(\min_{F\subseteq H, e_F \geq1} \frac{n^{v_F}P_F}{1 - P_F }\big)^{3/2}}{(\min_{F\subseteq H, e_F \geq 1}n^{v_F}P_F)^2},
		\end{aligned}
	\end{equation}
	which completes the proof.
	\end{proof}

	The proof of Theorem \ref{theo:JKR_bound} is more straightforward than proofs of Theorem \ref{theo:sufficient} or Theorem \ref{theo:mainbounds}.
	Thus, a question may emerge whether that additional effort is justified.
	Below,  we exhibit an example that compares  those tools for normal approximation of $\widetilde{Z}_H^n$.

	Firstly, notice that  when all probabilities $p_r$ are separated from $1$, each of the mentioned Theorems give the same (believed to be optimal in the case of the model $\mathbb G(n,p)$) bound of the form as in \eqref{aux8}, which follows from 
	$$\min\limits_{\substack{e\in E(H):\, p_{|e|}>1/2}}(1-p_{|e|}){n^{|e|}} \gtrsim { \min\limits_{\substack{F\subseteq H:\,e_F\geq1}}{P_F}\,n^{v_{F}}},$$
	in the case of  \ref{theo:sufficient}, and from 
\begin{align*}
d_1(\widetilde{Z}_H^n, \mathcal{N})&\lesssim (1-P_H)\frac{\big(\min_{F\subseteq H, e_F \geq1} \frac{n^{v_F}P_F}{1 - P_F }\big)^{3/2}}{(\min_{F\subseteq H, e_F \geq 1}n^{v_F}P_F)^2}\lesssim 1  \frac{\big(\min_{F\subseteq H, e_F \geq1}{n^{v_F}P_F}\big)^{3/2}}{(\min_{F\subseteq H, e_F \geq 1}n^{v_F}P_F)^2}\\
&= \frac{1}{(\min_{F\subseteq H, e_F \geq 1}n^{v_F}P_F)^{1/2}},
\end{align*}
that is a consequence of Theorem \ref{theo:JKR_bound}.
	It leads to an observation that any potential differences between methods may arise only if at least one probabilities   $\{p_r\}$ approaches $1$. It suffices to consider a graph on $2$ vertices with one standard edge and one loop ($1$-edge).

	\begin{example}\label{example1}
	Let $H$ by a hypergraph with  $v_H = 2$ and two edges of sizes $1$ and $2$, whose existence probabilities are   $p_1 = 1-n^{-3}$ and $p_2 = 1-n^{-1}$.	
The first inequality of Theorem \ref{theo:sufficient} implies
	\begin{equation}
		\begin{aligned}
			d_{W/K}(\widetilde{Z}_H^n, \mathcal{N})\lesssim \(n^4\wedge n^3\)^{-1/2}+ \(n^4\wedge n^3\){\( \frac{n^{-3}}{n^{3}}\vee \frac{n^{-1}}{n^{6}}\)^{1/2}} =n^{-3/2}+1\approx 1.
		\end{aligned}
	\end{equation} 
 Furthermore, Theorem \ref{theo:mainbounds} gives us
	\begin{equation}
		\begin{aligned}
			&d_{W}(\widetilde{Z}_H^n, \mathcal{N})\lesssim (n\wedge n^2)^{-1/4}+\(\frac{ n^{-1}n^{-3}}{n^{-3}} \wedge \frac{1}{n^{-3}n}\)^{1/2} +
			\(\frac{ n^{-2}n^{-1}}{n^{-3}} \wedge \frac{1}{n^{-1}n^2}\)^{1/2}\approx n^{-1/4},\\[5pt]
			&d_{K}(\widetilde{Z}_H^n, \mathcal{N})\lesssim n^{-1/5}+\(\(n^{-1}\)^{1/3} \wedge \(n^2\)^{1/2}\) +
			\(\(1\)^{1/3} \wedge \(\frac{1}{n}\)^{1/2}\)\approx n^{-1/5}.
		\end{aligned}
	\end{equation} 
	On the other hand, Theorem \ref{theo:JKR_bound} yields the  bound 
		\begin{align*}
			d_{K}(\widetilde{Z}_H^n, \mathcal{N})&\lesssim(1-P_H)\frac{\big(\min_{F\subseteq H, e_F \geq1} \frac{n^{v_F}}{1 - P_F }\big)^{3/2}}{(\min_{F\subseteq H, e_F \geq 1}n^{v_F})^2}
			\approx n^{-1} \frac{\big(n^4\wedge n^3\big)^{3/2}}{n^2} =n^{3/2}.
		\end{align*}
One can see that Theorems \ref{theo:sufficient} and \ref{theo:JKR_bound} do not even ensure asymptotic normality. Theorem \ref{theo:mainbounds}, which arises by enhancement of Theorem \ref{theo:sufficient}, does. One can apply a similar procedure of enhancement to the bound from Theorem \ref{theo:JKR_bound}, however, one can see that the starting point is already much worse.
	
		\end{example}

\section{Quantitive fourth moment phenomenon}

Let $H_1,\ldots ,H_{N_H^n}$ be all the isomorphic copies of $H$ in the complete hypergraph $K_n^*$. Recalling the notations  $\I _{H_i}:=\I _{\left\{ H_i \subseteq \mathbb{H}(n, {p}) \right\}}$ and 
$$Y_i:= \I _{H_i}-\E\left[\I _{H_i}\right] $$ for any  $i\in\{1,\ldots ,N_H^n\}$, we get

$$Z_H^n-\E[Z_H^n]=\sum_iY_i.$$
 One of the main goals of this section is estimating the expression
\begin{equation}\label{x4}
	\sqrt{\left|\E\[\(\widetilde{Z}^n_H\)^4\]-3\right|}
=\frac{\sqrt{\left|\E\[(Z_H^n-\E Z_H^n)^4\]-3(\operatorname{Var} [Z_H^n])^2\right|}}{\operatorname{Var} [Z_H^n]}.
\end{equation}
We start with deriving a suitable representation of the  last radicand.

For $H_1,\ldots ,H_m$ such that the graph $H_1\cap \(\bigcup_{i=2}^m H_i\)$ contains no edges  it holds that \begin{align}\label{eq:common}
\E\[ \prod_{i=1}^m Y_i\]=\E\[Y_{1}\]\E\[ \prod_{i=2}^mY_i\]=0,
\end{align} 
and such configurations will play a marginal role. We therefore introduce the following notation.
\begin{definition}\label{def:sep}We say that hypergraphs $H_1,\ldots ,H_m$, $m\geq2$, are {\it \Sep}  whenever there exists nonempty  $I\subseteq[m]$ such  that
$$E\(\Big(\bigcup_{i\in I} H_i\Big)\cap \Big(\bigcup_{i\in[m]\backslash I} H_i\Big)\)=\emptyset.$$
This will be denoted by $H_1,\ldots,H_m\sep$

On the other hand, if $H_1,\ldots ,H_m$ are not edgewise separable, we call them {\it \Ins} and  denote it by $H_1,\ldots ,H_m\ins$
\end{definition}
\begin{lemma}\label{lem:4mp-formula}It holds that
\begin{align*}
&\E[(Z_H^n-\E Z_H^n)^4]-3(\operatorname{Var} [Z_H^n])^2=\sum_{i} \[\E [Y_i^4]-3\(\E[Y_i^2]\)^2\]\\
&\ \ \ +\sum_{\substack{i\neq j\\E(H_i\cap H_j)\neq\emptyset}}\[3\E [Y_i^2 Y_j^2]+4\E [Y_i^3 Y_j]-3\E [Y_i^2]\E [Y_j^2]-12 \E [Y_i^2] \E [Y_i Y_j]-6 (\E [Y_i Y_j])^2\]\\
	&\ \ \ + \sum_{\substack{|\{i,j,k\}|=3\\H_i,H_j,H_k\ins}}6\[\E[ Y_i^2 Y_j Y_k]-  \E [Y_i^2] \E [Y_j Y_k]- 2\E [Y_i Y_j]\E [Y_j Y_k]\]\\[6pt]
	&\ \ \ +\sum_{\substack{|\{i,j,k,l\}|=4\\H_i,H_j,H_k,H_l\ins}}\[\E [Y_i Y_j Y_k Y_l]
	-3 \E[ Y_i Y_j]\E [Y_k Y_l]\].
\end{align*}
\end{lemma}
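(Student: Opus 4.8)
The plan is to expand both terms on the left-hand side directly in terms of the variables $Y_i$ and then to organize the resulting sums according to how many of the four indices coincide. Writing $Z_H^n-\E[Z_H^n]=\sum_i Y_i$, we have
$$\E[(Z_H^n-\E Z_H^n)^4]=\sum_{i,j,k,l}\E[Y_iY_jY_kY_l], \qquad 3(\Var[Z_H^n])^2=3\sum_{i,j,k,l}\E[Y_iY_j]\E[Y_kY_l],$$
where both sums run over all ordered quadruples of indices. I would then split each quadruple according to its pattern of coincidences, i.e. the number of distinct indices among $i,j,k,l$ (equivalently, the block sizes $(4)$, $(3,1)$, $(2,2)$, $(2,1,1)$, $(1,1,1,1)$), and record the combinatorial multiplicity of each pattern: a quadruple with one index repeated three times and a second once occurs in $\binom{4}{1}=4$ ordered arrangements, a quadruple of type $(2,2)$ in $\binom{4}{2}=6$, one of type $(2,1,1)$ in $12$, and one of type $(1,1,1,1)$ in $4!=24$. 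Carrying this out for the fourth-moment sum produces the terms $\E[Y_i^4]$, $4\E[Y_i^3Y_j]$, $3\E[Y_i^2Y_j^2]$, $6\E[Y_i^2Y_jY_k]$ and $\E[Y_iY_jY_kY_l]$, while the identical bookkeeping applied to $3(\Var[Z_H^n])^2$, now tracking how the two covariance factors split across the four positions, produces exactly the subtracted products of pairwise covariances appearing in each line of the claim.

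The second ingredient is the vanishing coming from edgewise separability. I would establish, generalizing \eqref{eq:common}, that whenever the hypergraphs indexed by a quadruple are \Sep (Definition \ref{def:sep}) the entire contribution of that quadruple to $\E[(Z_H^n-\E Z_H^n)^4]-3(\Var[Z_H^n])^2$ cancels: if the separating partition isolates a single index, the mixed moment factorizes through a centered factor and is therefore zero, while every covariance linking the isolated index to the rest also vanishes; and if the partition is of type $(2,2)$ (possible only for four distinct indices) the factorization $\E[Y_iY_jY_kY_l]=\E[Y_iY_j]\E[Y_kY_l]$ precisely annihilates the matching covariance product, the two remaining pairings vanishing because their cross-covariances are zero. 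In particular a pair with $E(H_i\cap H_j)=\emptyset$ is \sep and contributes nothing, which is why the second line of the claim is restricted to $E(H_i\cap H_j)\neq\emptyset$, and the three- and four-index sums may be restricted to \Ins families.

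It then remains to match coefficients line by line. The $(4)$-, $(3,1)$- and $(2,2)$-patterns give the first two lines essentially at once. The delicate point, and what I expect to be the main obstacle, is the three-distinct-index line: the natural grouping of $3(\Var[Z_H^n])^2$ yields a covariance cross-term of the shape $\E[Y_iY_j]\E[Y_iY_k]$ in which the repeated (squared) index is the one shared by the two covariances, whereas the claim is written with the shared index being one of the single ones, as $\E[Y_iY_j]\E[Y_jY_k]$. These two expressions are not equal quadruple by quadruple; they coincide only after summing over the three possible choices of which of $i,j,k$ is the repeated index within a fixed triple, a symmetrization I would carry out explicitly and which is legitimate precisely because being \Ins is a property of the unordered triple. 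The same mechanism, but for the three ways of pairing four indices, underlies the factor $3$ in front of the single product $\E[Y_iY_j]\E[Y_kY_l]$ in the last line: summing $\E[Y_iY_j]\E[Y_kY_l]$ over the $24$ orderings of a fixed $4$-set distributes equally over the three pairings, and together with the $24$-fold multiplicity of $\E[Y_iY_jY_kY_l]$ reproduces the stated coefficient. Collecting the surviving \Ins terms then yields the claimed identity.
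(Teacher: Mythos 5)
Your strategy coincides with the paper's own proof: expand $\E[(\sum_i Y_i)^4]$ and $3(\Var[Z_H^n])^2$ over quadruples of indices, sort by coincidence pattern with the multinomial multiplicities, and then show that edgewise separable configurations contribute nothing. Your two bookkeeping observations are also correct and are exactly the subtle points: the relabeling that converts the naturally arising $\E[Y_iY_j]\E[Y_iY_k]$ into the $\E[Y_iY_j]\E[Y_jY_k]$ of the statement is legitimate because the sum runs over all ordered triples and inseparability is a property of the unordered triple, and the coefficient $3$ in the four-index line does come from the three pairings of a $4$-set ($24$ orderings of the moment versus $8$ orderings per pairing of the product).

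There is, however, a genuine flaw in your separability step. You claim that whenever the separating partition isolates a single index, "the mixed moment factorizes through a centered factor and is therefore zero." This is false precisely when the isolated index is one carrying even multiplicity in the quadruple. Two concrete cases: for the pattern $(2,2)$ with $E(H_i\cap H_j)=\emptyset$ one has $\E[Y_i^2Y_j^2]=\E[Y_i^2]\E[Y_j^2]\neq 0$; and for the pattern $(2,1,1)$ with the squared hypergraph $H_i$ edge-disjoint from $H_j\cup H_k$ while $H_j$ and $H_k$ share an edge, one has $\E[Y_i^2Y_jY_k]=\E[Y_i^2]\E[Y_jY_k]\neq 0$. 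In these cases the moment does not vanish; instead it cancels against the \emph{matching subtracted product} ($3\E[Y_i^2]\E[Y_j^2]$, respectively $6\E[Y_i^2]\E[Y_jY_k]$) — the very mechanism you invoke only for the $(2,2)$-partition of four distinct indices. Note also that these products are not "covariances linking the isolated index to the rest" (they are a variance of the isolated block times a covariance internal to the rest), so your second clause does not dispose of them either. Carried out literally, your argument would delete the moments but leave those subtracted products standing, and the restriction of the second and third sums to inseparable configurations would not follow. The paper's proof makes exactly the needed case distinction for triples: if the squared index is the separated one, the \emph{difference} $\E[Y_i^2Y_jY_k]-\E[Y_i^2]\E[Y_jY_k]$ vanishes by independence (neither term separately), while if a singly occurring index is separated, both the moment and the product vanish individually because a centered factor splits off. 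With this correction your argument closes and is the paper's proof.
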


\begin{proof}
 Let us rewrite the variance of $Z_H^n$ as follows 
\begin{align*}
	\left(\text{Var}Z_H^n\right)^2&=\left( \E\Big[\big(\sum_{i} Y_i \Big) ^2\Big]\right) ^2= \left( \sum_{i}\E[ Y_i^2]+\sum_{i\neq j}\E[Y_i Y_j]\right) ^2\nonumber\\
	&= \left(  \sum_{i} \E[Y_i^2]\right) ^2+2\left(  \sum_{i} \E[Y_i^2]\sum_{j\neq k}\E[Y_{j} Y_{k}]\right)+\left( \sum_{i\neq j}\E[Y_i Y_j]\right) ^2\\
	&=  \sum_{i} \(\E[Y_i^2]\)^2+ \sum_{i\neq j} \E[Y_i^2]\E[Y_j^2]
	+4 \sum_{i\neq j} \E[Y_i^2]\E[Y_i Y_j]
	+2 \sum_{|\{i,j,k\}|=3} \E[Y_i^2]\E[Y_j Y_k]\\
	&\ \ \ + 2\sum_{\substack{i\neq j}}\(\E[Y_i Y_j]\)^2
	+ 4\sum_{|\{i,j,k\}|=3}\E[Y_i Y_j]\E[Y_i Y_k]
	+ \sum_{|\{i,j,k,l\}|=4}\E[Y_i Y_j]\E[Y_k Y_l].
\end{align*}
Further, by the multinomial theorem we get
\begin{align*}
	\E\[\left( \sum_{H_i} Y_i\right)^4\]
	&=\E \[\sum_{\substack{k_1,\ldots ,k_{N_n}\geq 0 \\ k_1+\ldots +k_{N_n}=4}} \binom{4}{k_1,\ldots ,k_{N_n}}Y_{H_1}^{k_1}\cdot\ldots\cdot  Y_{H_{N_n}}^{k_{N_n}}\]\\
	&=\sum_{i}\E [Y_i^4]+3\sum_{i\neq j}\E[Y_i^2 Y_j^2]+4\sum_{i\neq j}\E[Y_i^3 Y_j]+6\sum_{|\{i,j,k\}|=3}\E [Y_i^2 Y_j Y_k]\nonumber\\
	&\ \ \ +\sum_{|\{i,j,k,l\}|=4}\E[Y_i Y_j Y_k Y_l] .
\end{align*}
Combining the last two representations, we arrive at
\begin{align*}
\E\[\(Z_H^n\)^4\]-3	\E\[\(Z_H^n\)^2\]^2&=S_1+S_2+S_3+S_4,
\end{align*}
where
\begin{align*}
S_1&=\sum_{i} \[\E [Y_i^4]-3\(\E[Y_i^2]\)^2\],\\
S_2&=3 \sum_{i\neq j}\Big[\E[ Y_i^2 Y_j^2]-\E [Y_i^2]\E [Y_j^2]\big]+\sum_{i\neq j}\Big[4\E[ Y_i^3 Y_j]-12  \E[Y_i^2]\E[Y_i Y_j]-6\(\E[Y_i Y_j]\)^2\Big],\\
	S_3&=6 \sum_{|\{i,j,k\}|=3}\Big[\E [Y_i^2 Y_j Y_k]-  \E [Y_i^2] \E [Y_j Y_k]\Big]-12\sum_{|\{i,j,k\}|=3}\E[Y_i Y_j]\E[Y_i Y_k],\\
	S_4&=\sum_{|\{i,j,k,l\}|=4}\Big[\E[ Y_i Y_j Y_k Y_l]
	-3 \E [Y_i Y_j]\E [Y_k Y_l]\Big].
\end{align*}
Our goal now is to reduce all summation to configurations of \Ins hypergraphs. We can clearly do this in both of the sums of $S_2$ and in the latter sum of $S_3$, which follows from the fact that for separable $H_i, H_j$ the variables  $Y_i$ and $Y_j$ are independent. 
We will therefore focus on the first sum of $S_3$ and the sum $S_4$ and show that they vanish when restricted TO \Sep configurations of hypergraphs. Regarding the one of $S_3$, let $H_i,H_j,H_k$ be \Sep and  consider two cases. If $H_i$ has no common edge with $H_j\cup H_k$, then 
$$\E [Y_i^2 Y_j Y_k]=\E [Y_i^2]\E[ Y_j Y_k].$$
Another possibility is that  $H_j$ or $H_k$ has no common edge with {the} other two  hypergraphs. Assuming, without loss of the generality,  it is $H_k$, we  get
\begin{align*}
\E [Y_i^2 Y_j Y_k]&=\E [Y_i^2 Y_j]\E[ Y_k]=0,\\
\E [Y_i^2] \E [Y_j Y_k]&=\E [Y_i^2] \E [Y_j]\E[ Y_k]=0.
\end{align*}
Thus, every term for separable  $H_i,H_j,H_k$ vanishes.

Eventually, we turn our attention to the sum $S_4$. Assume  $H_i,H_j,H_k,H_l$ are edgewise separable. If one of the hypergraphs has no common edges with any other, then $\E [Y_i Y_j Y_k Y_l]=0$. 
The only other possible case is when  one pair of graphs has no common edge with  another pair. Furthermore, for a fixed quadruple of such graphs, the expected value $\E[ Y_i Y_j Y_k Y_l]$ appears $24$ times in the sum, as one may simply permute the indices, while in the sum with the  $\E[ Y_i Y_j]\E[ Y_k Y_l]$ every term appears only $8$ times, since one element in each pair determines the other one (in fact, every third term in this sum is non-zero). Consequently, we have
$$\sum_{\substack{|\{i,j,k,l\}|=4\\H_i,H_j,H_k,H_l\sep}}\E [Y_i Y_j Y_k Y_l]=\frac{24}8\sum_{\substack{|\{i,j,k,l\}|=4\\H_i,H_j,H_k,H_l\sep}}\E[ Y_i Y_j]\E[ Y_k Y_l],$$
as required.
\end{proof}
Next, we will estimate the terms in Lemma \ref{lem:4mp-formula}.
\begin{lemma}\label{EYI}
	For integer $r\geq2$ and \Ins $H_{i_1},\ldots, H_{i_r}$ there exists $c_1=c_1(r)\in (0, 1)$ such that for $p<c_1$ it holds that
	\begin{align}\label{EYY1}
		\E [Y_{{i_1}}  \cdot\ldots\cdot Y_{i_r}] \stackrel{r}{\approx} p^{e_{H_{i_1}\cup\ldots H_{i_r}}}.
	\end{align}
	Furthermore, for $p\in(0, 1)$ we have 
		\begin{align}\label{EYY2}
		   \E \left|Y_{{i_1}}  \cdot\ldots\cdot Y_{i_r}\right|\leq2e_H^r(1-p),
	\end{align}
	and
	\begin{align}\label{EYY3}
	\E [Y_{i_1}  \cdot\ldots\cdot Y_{i_r}]\geq (-1)^{r}\P\big(H_{i_1},\ldots,H_{i_r}\not\in \mathbb H(n,p)\big)-c_2(1-p)^2.
	\end{align}
	for some $c_2=c_2(r)>0$.
	
	\end{lemma}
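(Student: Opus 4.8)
I would treat all three estimates as consequences of a single expansion of the product of centred indicators, so I would open with a common computation. In the homogeneous model each copy satisfies $\E[\I_{H_{i_j}}]=p^{e_H}=:a$ and $\I_{H_{i_j}}=\prod_{e\in E(H_{i_j})}X_e$, hence for every $S\subseteq[r]$ one has $\E\big[\prod_{j\in S}\I_{H_{i_j}}\big]=p^{u_S}$, where $u_S:=e_{\bigcup_{j\in S}H_{i_j}}$ counts the edges of the corresponding union (with $u_\emptyset=0$ and $u_{[r]}=e_U$ for $U:=H_{i_1}\cup\cdots\cup H_{i_r}$). Expanding $\prod_j(\I_{H_{i_j}}-a)$ over subsets then yields the exact identity
\begin{equation*}
\E[Y_{i_1}\cdots Y_{i_r}]=\sum_{S\subseteq[r]}(-1)^{r-|S|}\,p^{(r-|S|)e_H}\,p^{u_S}=\sum_{S\subseteq[r]}(-1)^{r-|S|}p^{\pi(S)},\qquad \pi(S):=(r-|S|)e_H+u_S,
\end{equation*}
which is the engine behind \eqref{EYY1}.

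For \eqref{EYY1} (small $p$), the plan is to show that the exponent $\pi(S)$ is minimised \emph{only} at $S=[r]$, where $\pi([r])=e_U$ and the coefficient is $+1$. Writing $T(e):=\{j:e\in E(H_{i_j})\}$, I would record the edge identities $(r-|S|)e_H=\sum_{e\in E(U)}|T(e)\cap([r]\setminus S)|$ and $e_U-u_S=|\{e:T(e)\subseteq[r]\setminus S\}|$, whence $\pi(S)-e_U\ge 0$ with equality exactly when (i) no edge is shared between the $S$-copies and the $([r]\setminus S)$-copies and (ii) the copies indexed by $[r]\setminus S$ are pairwise edge-disjoint. This is where edgewise inseparability is used: for $\emptyset\neq S\subsetneq[r]$ condition (i) is a separation with $I=S$, while for $S=\emptyset$ condition (ii) makes all $r$ copies pairwise edge-disjoint, which (as $r\ge2$) is again a separation; both contradict $H_{i_1},\ldots,H_{i_r}\ins$. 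Hence $\pi(S)\ge e_U+1$ for every $S\neq[r]$, so $\E[Y_{i_1}\cdots Y_{i_r}]=p^{e_U}(1+\theta)$ with $|\theta|\le 2^r p$, and choosing $c_1=c_1(r)$ with $2^rc_1\le\tfrac12$ gives \eqref{EYY1}. I expect this combinatorial isolation of the leading term to be the main obstacle; the rest is bookkeeping.

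The bound \eqref{EYY2} is the easiest and needs no inseparability. Since each factor obeys $|Y_{i_j}|\le1$, I would discard all but one, $\E|Y_{i_1}\cdots Y_{i_r}|\le\E|Y_{i_1}|=2p^{e_H}(1-p^{e_H})$, and then use $1-p^{e_H}\le e_H(1-p)$ together with $e_H\le e_H^{\,r}$ to conclude $\E|Y_{i_1}\cdots Y_{i_r}|\le 2e_H(1-p)\le 2e_H^{\,r}(1-p)$.

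Finally, for \eqref{EYY3} (the regime $p\to1$) I would re-expand around the complementary events. Setting $J_{i_j}:=1-\I_{H_{i_j}}$, so that $Y_{i_j}=a-J_{i_j}$ with $a=1-p^{e_H}\le e_H(1-p)$, the expansion of $\prod_j(a-J_{i_j})$ gives
\begin{equation*}
\E[Y_{i_1}\cdots Y_{i_r}]=(-1)^r m_{[r]}+\sum_{S\subsetneq[r]}(-1)^{|S|}a^{\,r-|S|}m_S,\qquad m_S:=\P\Big(\bigcap_{j\in S}\{H_{i_j}\not\subseteq\mathbb{H}(n,p)\}\Big),
\end{equation*}
where $m_{[r]}=\P(H_{i_1},\ldots,H_{i_r}\not\in\mathbb{H}(n,p))$. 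Every remaining term is of second order in $1-p$: if $r-|S|\ge2$ then $a^{r-|S|}\le a^2$, while if $|S|=r-1$ the index set $S$ is nonempty and monotonicity gives $m_S\le a$, so $a\,m_S\le a^2$; in all cases the term is at most $a^2\le e_H^2(1-p)^2$. Bounding the at most $2^r$ such terms by $c_2(1-p)^2$ with $c_2=c_2(r)$ then yields $\E[Y_{i_1}\cdots Y_{i_r}]\ge(-1)^r m_{[r]}-c_2(1-p)^2$, which is \eqref{EYY3}. As with \eqref{EYY2}, this step rests only on $|Y_{i_j}|\le1$ and the union bound $m_S\le a$, not on inseparability.
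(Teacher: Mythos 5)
Your proposal is correct and follows essentially the same route as the paper: the same subset expansion of $\prod_j(\I_{H_{i_j}}-p^{e_H})$ with edgewise inseparability forcing every exponent with $S\neq[r]$ to exceed $e_{H_{i_1}\cup\ldots\cup H_{i_r}}$ by at least one for \eqref{EYY1}, and the same expansion around the complementary indicators $1-\I_{H_{i_j}}$ with all cross terms absorbed into an $O_r((1-p)^2)$ error for \eqref{EYY3}. The only departure is cosmetic: for \eqref{EYY2} (and the cross terms of \eqref{EYY3}) you use $|Y_{i_j}|\leq 1$ and the monotonicity bound $m_S\leq a$ to drop factors, where the paper invokes the generalized H\"older inequality — your version is slightly more elementary but yields the same bounds.
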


\begin{proof}Let us rewrite 
\begin{align*}
\E [Y_{i_1}  \cdot\ldots\cdot Y_{i_r}]&=\E\[ \prod_{k=1}^r\(\I_{H_{i_k}}-p^{e_H}\)\]\\
&=\E \[\I_{H_{i_1}} \cdot\ldots\cdot \I_{H_{i_r}}+ \prod_{A\varsubsetneq[r]}(-1)^{r-|A|}\I _{\left\{\bigcup_{l\in A}H_{i_l}\right\}}p^{(r-|A|)e_H}\]\\
&=p^{e_{H_{i_1}\cup\ldots H_{i_r}}}\[1+ \prod_{A\varsubsetneq[r]}(-1)^{r-|A|}p^{\left\{e_{\bigcup_{l\in A}H_{i_l}}+\sum_{l\in [r]\backslash A}e_{H_{i_l}}-e_{H_{i_1}\cup\ldots H_{i_r}}\right\}}\]\\
&=p^{e_{H_{i_1}\cup\ldots H_{i_r}}}\big(1+2^rO(p)\big),
\end{align*}
where the last equality follows from the assumption of edgewise inseparability of $H_{i_1},\ldots,H_{i_r}$, which implies that the hypergraphs $\bigcup_{l\in A}H_{i_l}$ and $\bigcup_{l\in[r]\backslash A}H_{i_l}$ share a common edge and hence
$$e_{\bigcup_{l\in A}H_{i_l}}+\sum_{l\in [r]\backslash A}e_{H_{i_l}}>e_{H_{i_1}\cup\ldots H_{i_r}}.$$ Taking $p$ sufficiently close to $0$, we get the estimate \eqref{EYY1}.

We pass to the next assertion of the lemma.
For every $i\in[N_H^n]$ and $r\in\N$ it holds
\begin{align*}
\E[(Y_i)^r]&=(1-p^{e_H})^rp^{e_{H}}+\left(-p^{e_H}\right)^r(1-p^{e_{H}})\\
&\leq [e_H(1-p)]^rp^{e_{H}}+\left(p^{e_H}\right)^re_H(1-p)\leq 2e_H^r(1-p).
\end{align*}
Hence,  the generalized H\"older inequality gives us
\begin{align*}
\E\[Y_{i_1}  \cdot\ldots\cdot Y_{i_r}\]&\leq \(\E\[Y_{i_1}^{r}\]  \cdot\ldots\cdot \E\[Y_{i_r}^{r}\]\)^{1/r}\leq  2e_H^r(1-p),
\end{align*}
which proves us  the inequality \eqref{EYY2}. Furthermore, since $\E\[|\I_{H_i}-1|^k\]=1-p^{e_H}$ for any $k\in\N$ and $i\in[N_H^n]$, the generalized H\"older inequality applied once again lets us bound
\begin{align*}
&\E\[Y_{i_1}  \cdot\ldots\cdot Y_{i_r}\]\\
&=\E \[\big((\I_{H_{i_1}}-1)+(1-p^{e_H})\big) \cdot\ldots\cdot \big((\I_{H_{i_r}}-1)+(1-p^{e_H})\big)\]\\
&=\E \[(\I_{H_{i_1}}-1) \cdot\ldots\cdot(\I_{H_{i_r}}-1)\]+O_{r}\bigg((1-p^{e_H})^2+(1-p^{e_H})\E \big[|\I_{H_{1}}-1|^{r-1}\big]\bigg)\\
&=(-1)^r\P\bigg(H_{i_1}, \ldots, H_{i_r}\notin\mathbb{H}(n, p) \bigg)+O_{r}((1-p)^2),
\end{align*}
where $\left|O_{r}((1-p)^2)\right|\leq c(1-p)^2$ for some $c=c(r)$.
\end{proof}

The next two theorems deal with $p$ very close to $0$ or very close to $1$.

\begin{theorem}\label{thm:4mpp->0}
There exists $\varepsilon=\varepsilon(H)>0$  such that for $p<\varepsilon$   we have
$$ {\E\[\widetilde Z_H^4\]-3}\approx\frac1{ \min\limits_{\substack{F\subseteq H\\e_F\geq1}}n^{v_F}p^{e_F}}.$$
\end{theorem}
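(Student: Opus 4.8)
The plan is to start from the exact identity \eqref{x4}, which writes $\E[(\widetilde Z_H^n)^4]-3$ as the fourth cumulant of $Z_H^n$ divided by $(\Var[Z_H^n])^2$, and to analyse the numerator through Lemma \ref{lem:4mp-formula}. Since $p<\varepsilon$ is small, I would estimate each summand there by its leading power of $p$: the estimate \eqref{EYY1} of Lemma \ref{EYI} gives $\E[Y_{i_1}\cdots Y_{i_r}]\approx p^{e_U}$ for an \Ins family with union $U$, and the elementary identity $Y_i^2=\I_{H_i}(1-2p^{e_H})+p^{2e_H}$ lets me treat repeated indices by replacing $Y_i^k$ with $\I_{H_i}$ at leading order. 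Throughout set $m^\ast:=\min_{F\subseteq H:\,e_F\geq1}n^{v_F}p^{e_F}$, so that \eqref{eq:homoVar} reads $\Var[Z_H^n]\approx n^{2v_H}p^{2e_H}/m^\ast$ for small $p$.

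The decisive point is that \emph{every} summand in Lemma \ref{lem:4mp-formula} is positive at leading order, so that no cancellation occurs. Indeed, edgewise inseparability forbids splitting the indices into two edge-disjoint groups, which forces the ``full'' expectation to carry a strictly smaller power of $p$ than any subtracted product of lower moments; for instance in the four-index term one has $e_{H_i\cup H_j\cup H_k\cup H_l}<e_{H_i\cup H_j}+e_{H_k\cup H_l}$ for every pairing, so $\E[Y_iY_jY_kY_l]\approx p^{e_{H_i\cup H_j\cup H_k\cup H_l}}$ strictly dominates $3\E[Y_iY_j]\E[Y_kY_l]$, and analogously for the single, pair and triple sums. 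Each summand is thus a positive constant times $p^{e_U}(1+O(p))$, where $U$ is the union of the copies involved.

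Granting positivity, I would group the summands by the isomorphism type of $U$. There are finitely many types (each $U$ has at most $4v_H$ vertices) and $\approx n^{v_U}$ placements of a given type in $K_n^\ast$; choosing $\varepsilon=\varepsilon(H)$ so small that the factors $1+O(p)$ stay in $[\tfrac12,2]$ uniformly over these types, I obtain
$$\E[(Z_H^n-\E Z_H^n)^4]-3(\Var[Z_H^n])^2\;\approx\;\sum_U n^{v_U}p^{e_U}\;\approx\;\max_U n^{v_U}p^{e_U},$$
the maximum over \Ins unions of between one and four copies of $H$. To evaluate it, start from $k$ disjoint copies, of value $n^{kv_H}p^{ke_H}$: each merging of two copies along a common subhypergraph $G$ with $e_G\geq1$ multiplies the value by $1/(n^{v_G}p^{e_G})\leq 1/m^\ast$, and a connected union of $k$ copies requires at least $k-1$ such merges, so the maximum over unions of exactly $k$ copies equals $n^{kv_H}p^{ke_H}/(m^\ast)^{k-1}$, attained by a chain whose consecutive copies overlap in a copy of the subhypergraph $F^\ast$ realising $m^\ast$. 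As $n^{v_H}p^{e_H}\geq m^\ast$, this is nondecreasing in $k$, so the overall maximum is the value at $k=4$, namely $n^{4v_H}p^{4e_H}/(m^\ast)^3$. Dividing by $(\Var[Z_H^n])^2\approx(n^{2v_H}p^{2e_H}/m^\ast)^2$ yields $\E[(\widetilde Z_H^n)^4]-3\approx 1/m^\ast$, as claimed.

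The main obstacle is precisely the positivity/no-cancellation step of the second paragraph: a priori the terms of Lemma \ref{lem:4mp-formula} carry both signs and differ wildly in size, and it is only edgewise inseparability that guarantees the dominant $p$-power of each configuration appears with a definite positive sign and strictly outweighs every lower-moment correction. Once this is secured, the signed sum may be replaced by $\max_U n^{v_U}p^{e_U}$, after which the combinatorial optimization and the division by the variance are routine.
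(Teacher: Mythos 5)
Your proposal is correct and follows essentially the same route as the paper's proof: the identity \eqref{x4} combined with Lemma \ref{lem:4mp-formula}, the no-cancellation step for small $p$ justified by edgewise inseparability (exactly the content of \eqref{EYY1} and the paper's bracket estimates showing each subtracted product of lower moments carries a strictly larger power of $p$), and the combinatorial optimization yielding $n^{4v_H}p^{4e_H}\big(\min_{F\subseteq H,\,e_F\geq1}n^{v_F}p^{e_F}\big)^{-3}$, after which division by $(\mathrm{Var}[Z_H^n])^2$ finishes the argument. The only cosmetic differences are that the paper absorbs all configurations into a single sum of $\E[\I_{H_i}\I_{H_j}\I_{H_k}\I_{H_l}]$ over edgewise inseparable quadruples and realizes the lower bound with four copies sharing one common copy of the extremal subhypergraph, whereas you group by the isomorphism type of the union and use a chain construction plus monotonicity in the number of copies --- the same computation in different packaging.
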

\begin{proof} Due to \eqref{eq:homoVar}, it suffices to show that 
\begin{align*}
\E\[\(Z_H^n-\E[Z_H^n]\)^4\]-3\big(\mathrm{Var}[Z_H^n]\big)^2\approx n^{4 v_H} p^{4 e_H}\Bigg( \max_{\substack{F\subseteq H\\e_F\geq1} } n^{-v_F} p^{-e_F}\Bigg)^3.
\end{align*}
Let us rewrite the formula from Lemma \ref{lem:4mp-formula} in the following manner
\begin{align*}
&\E\[\(Z_H^n-\E[Z_H^n]\)^4\]-3\big(\mathrm{Var}[Z_H^n]\big)^2=\sum_{i} \E [Y_i^4]\[1-3\frac{\(\E[Y_i^2]\)^2}{\E[ Y_i^4]}\]\\
&+\sum_{\substack{i\neq j\\E(H_i\cap H_j)\neq\emptyset}}\hspace{-15pt}\(3\E [Y_i^2 Y_j^2]+4\E[ Y_i^3 Y_j]\)\[1-\frac{3\E [Y_i^2]\E [Y_j^2]+12 \E [Y_i^2] \E [Y_i Y_j]+6 (\E [Y_i Y_j])^2}{3\E [Y_i^2 Y_j^2]+4\E [Y_i^3 Y_j]}\]\\
	&+6 \sum_{\substack{|\{i,j,k\}|=3\\H_i, H_j, H_k \ins}}\E[ Y_i^2 Y_j Y_k]\[1- \frac{ \E [Y_i^2] \E [Y_j Y_k]+2 \E [Y_i Y_j]\E[ Y_j Y_k]}{\E[ Y_i^2 Y_j Y_k]}\]\\[6pt]
	&+\sum_{\substack{|\{i,j,k,l\}|=4\\H_i, H_j, H_k, H_l \ins}}\E [Y_i Y_j Y_k Y_l]
	\[1-3 \frac{\E[ Y_i Y_j]\E [Y_k Y_l]}{\E [Y_i Y_j Y_k Y_l]}\].
\end{align*}
We will show that the expressions in brackets are comparable with a constant for $p$ sufficiently small. Indeed, by \eqref{EYY1} and the equality $\E\[\I_{H_i}\]=p^{e_{H}}$, $i\in[N_n]$, we have 
\begin{align*}
\frac{\(\E[Y_i^2]\)^2}{\E[ Y_i^4]}\approx\frac{\(\E[\I_{H_i}]\)^2}{\E[ \I_{H_i}]}=p^{e_H},\ \ \ p<c,
\end{align*}
for some $c>0$. Next, using the equivalence  $\I_{H_i}\I_ {H_j}=\I_{H_i\cup H_j}$ (which may seem peculiar when compared to the algebra of classical indicators) and the estimate \eqref{EYY1}, we get 
\begin{align*}
&\frac{3\E [Y_i^2]\E [Y_j^2]+12 \E [Y_i^2] \E [Y_i Y_j]+6 (\E [Y_i Y_j])^2}{3\E[ Y_i^2 Y_j^2]+4\E [Y_i^3 Y_j]}\\
&\approx \frac{\E [\I_{H_i}]\E [\I_{H_j}]+ \E [\I_{H_i}] \E [\I_{H_i} \I_{H_j}]+ (\E [\I_{H_i} \I_{H_j}])^2}{\E [\I_{H_i} \I_{H_j}]}\\
&=\frac{\E [\I_{H_i}]\E [\I_{H_j}]+ \E [\I_{H_i}] \E [\I_{H_i\cup H_j} ]+ (\E [\I_{H_i\cup H_j} ])^2}{\E[ \I_{H_i\cup H_j}]}\\
&=\frac{p^{2e_H}+ p^{e_H}p^{e_{H_i\cup H_j}}+ p^{2e_{H_i\cup H_j}} }{p^{e_{H_i\cup H_j}}}=p^{2e_H-{e_{H_i\cup H_j}}}+p^{e_H}+p^{e_{H_i\cup H_j}}\leq 3p,
\end{align*}
where the last inequality follows from the edgewise inseparability of $H_i$ and $H_j$, which ensures that $1\leq e_{H_i\cup H_j}<2e_H$. 
We proceed analogously with the last two sums, and eventually arrive at
\begin{align*}
&\E\[\(Z_H^n\)^4\]-3	\E\[\(Z_H^n\)^2\]^2\\[6pt]
&\approx \sum_{i} \E [\I_{H_i}]+\hspace{-10pt}\sum_{\substack{i\neq j\\E(H_i\cap H_j)\neq\emptyset}}\hspace{-10pt}\E [\I_{H_i}\I_{H_j}]+\hspace{-10pt} \sum_{\substack{|\{i,j,k\}|=3\\H_i, H_j, H_k \ins}}\hspace{-10pt}\E [\I_{H_i} \I_{H_j} \I_{H_k}]+\hspace{-10pt}\sum_{\substack{|\{i,j,k,l\}|=4\\H_i, H_j, H_k, H_l \ins}}\hspace{-10pt}\E [\I_{H_i} \I_{H_j} \I_{H_k} \I_{H_l}]\\[6pt]
&=\sum_{\substack{i,j,k,l\\H_i, H_j, H_k, H_l \ins}}\E [\I_{H_i} \I_{H_j} \I_{H_k} \I_{H_l}].
\end{align*}
What has left is to prove that
\begin{align}\label{eq:1111}
\sum_{\substack{i,j,k,l\\H_i, H_j, H_k, H_l \ins}}\E [\I_{H_i} \I_{H_j} \I_{H_k} \I_{H_l}]\approx n^{4 v_H} p^{4 e_H}\left( \max_{\substack{F\subseteq H\\e_F\geq1} } n^{-v_F} p^{-e_F}\right)^3.
\end{align}
We start with the upper bound. Fix $i,j,k,l\in[N_H^n]$ such that  $H_i,H_j,H_k, H_l$ are edgewise inseparable. Without loss of generality,  one can assume that each of the collections $H_{i}, H_{j}$ and $H_{i}, H_{j}, H_{k}$ and $ H_{i}, H_{j}, H_{k}, H_{l}$ is \Ins as well, and therefore each of the hypergraphs $A_1=H_{i}\cap H_{j}$, $A_2=\(H_{i}\cup H_{j}\)\cap H_{k}$  and $A_3=\(H_{i}\cup H_{j}\cup H_{k}\)\cap H_{l}$ is isomorphic  to a subhypergraph of $H$ with at least one edge. Additionally, it holds that
$$v_{H_i\cup H_j\cup H_k\cup H_l}=4v_H-v_{A_1}-v_{A_2}-v_{A_3}.$$
Thus, we obtain
\begin{align*}
&\E [\I_{H_i} \I_{H_j} \I_{H_k} \I_{H_l}]=p^{e_{H_i\cup H_j\cup H_k\cup H_l}}=p^{e_{H_{i}}}p^{e_{H_j}-e_{A_1}}p^{e_{H_{k}}-e_{A_2}}p^{e_{H_{l}}-e_{A_3}}\\[5pt]
&=p^{4e_H}\(n^{-v_{A_1}}p^{-e_{A_1}}\)\(n^{-v_{A_2}}p^{-e_{A_2}}\)\(n^{-v_{A_3}}p^{-e_{A_3}}\)n^{v_{A_1}+v_{A_2}+v_{A_3}}\\[5pt]
&=n^{-v_{H_i\cup H_j\cup H_k\cup H_l}}n^{4v_H}p^{4e_H}\(n^{-v_{A_1}}p^{-e_{A_1}}\)\(n^{-v_{A_2}}p^{-e_{A_2}}\)\(n^{-v_{A_3}}p^{-e_{A_3}}\)\\[5pt]
&\leq n^{-v_{H_i\cup H_j\cup H_k\cup H_l}}n^{4v_H}p^{4e_H}\left( \max_{\substack{F\subseteq H\\e_F\geq1} } n^{-v_F} p^{-e_F}\right)^3,
\end{align*}
and consequently
\begin{align*}
\sum_{\substack{i,j,k,l\\H_i, H_j, H_k, H_l \ins}}\E [\I_{H_i} \I_{H_j} \I_{H_k} \I_{H_l}]
&\leq n^{4v_H}p^{4e_H}\left( \max_{\substack{F\subseteq H\\e_F\geq1} } n^{-v_F} p^{-e_F}\right)^3\sum_{\substack{i,j,k,l\\H_i, H_j, H_k, H_l \ins}}n^{-v_{H_i\cup H_j\cup H_k\cup H_l}}.
\end{align*}
Since $v_H\leq v_{H_i\cup H_j\cup H_k\cup H_l}=4v_H-v_{A_1}-v_{A_2}-v_{A_3}\leq 4v_H-3$, we estimate the last sum as follows 
\begin{align*}
\sum_{\substack{i,j,k,l\\H_i, H_j, H_k, H_l \ins}}n^{-v_{H_i\cup H_j\cup H_k\cup H_l}}&=\sum_{s=v_H}^{4v_H-3}\sum_{\substack{i,j,k,l\\H_i, H_j, H_k, H_l \ins\\v_{H_1\cup H_j\cup H_k\cup H_l}=s}}n^{-s}
\leq \sum_{s=v_H}^{4v_H-3}4!\sum_{\substack{F\subseteq K_n^*\\v_{F}=s}}n^{-s}\approx 1,
\end{align*}
which gives the  upper bound in \eqref{eq:1111}. 

Let $H^*\subseteq H$ realize the maximum in \eqref{eq:1111}, i.e., it holds that $\max_{\substack{F\subseteq H\\e_F\geq1} } n^{-v_F} p^{-e_F}=n^{-v_{H^*}} p^{-e_{H^*}}$. In order to obtain the lower estimate in \eqref{eq:1111}, we narrow the sum on the left-hand side only to the quadruples $(i,j,k,l)$ from the set 
$$I:=\left\{\{i_1,i_2,i_3,i_4\}: \exists _{H'\subseteq K_n^*}\(H'\simeq H^* \wedge \forall _{r,s\in\{1,2,3,4\}, r\neq s} H_{i_{r}}\cap H_{i_s}=H'\)\right\}. $$
In other words, the graphs $H_i,H_j, H_k, H_l$ have one common part, which is isomorphic to $H^*$, and any two of them have no other common parts. Note that $|I|\geq {n\choose 4v_H-3v_{H^*}}$ as  on every $4v_H-3v_{H^*}$ vertices one can generate at least one and unique  configuration of hypergraphs corresponding to indices from $I$. Thus we get 
\begin{align*}
\sum_{\substack{i,j,k,l\\H_i, H_j, H_k, H_l \ins}}\E[ \I_{H_i} \I_{H_j} \I_{H_k} \I_{H_l}]&\geq \sum_{\{i,j,k,l\}\in I}\E [\I_{H_i} \I_{H_j} \I_{H_k} \I_{H_l}]=|I|p^{4e_H-3e_{H^*}}\\
&\gtrsim n^{4v_H-3v_{H^*}}p^{4e_H-3e_{H^*}} =n^{4v_H}p^{4e_H}\(\max_{\substack{F\subseteq H\\e_F\geq1} } n^{-v_F} p^{-e_F}\)^3,
\end{align*}
which ends the proof.
\end{proof}

\begin{theorem}\label{thm:4mpp->1}
There exists $\varepsilon=\varepsilon(H)>0$  such that for $p>1-\varepsilon$  we have
$$ {\E\[\widetilde Z_H^4\]-3}\approx\frac1{(1-p)n^{\min\{|e|:e\in E(H)\}}}.$$
\end{theorem}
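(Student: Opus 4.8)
The plan is to mirror the proof of Theorem \ref{thm:4mpp->0}, replacing the small parameter $p$ by $1-p$; this swap changes which combinatorial configurations dominate, so the final count is different. Writing $m\defeq\min\{|e|:e\in E(H)\}$, formula \eqref{x4} together with \eqref{eq:homoVar} reduces everything to the single asymptotic
$$\E\big[(Z_H^n-\E Z_H^n)^4\big]-3\big(\Var[Z_H^n]\big)^2\approx (1-p)\,n^{4v_H-3m},$$
because for $p$ near $1$ every factor $p^{e_F}$ in the minimum defining the variance is $\approx1$, so that minimum is realized by a single smallest edge and $\Var[Z_H^n]\approx (1-p)n^{2v_H-m}$; dividing the displayed quantity by $(\Var[Z_H^n])^2\approx(1-p)^2 n^{4v_H-2m}$ then gives exactly $1/((1-p)n^m)$.

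First I would observe that in the formula of Lemma \ref{lem:4mp-formula} every summand splits into a \emph{joint moment of total degree four} (such as $\E[Y_i^4]$, $\E[Y_i^2Y_j^2]$, $\E[Y_i^3Y_j]$, $\E[Y_i^2Y_jY_k]$, $\E[Y_iY_jY_kY_l]$) minus products of two lower-degree moments. Reading \eqref{EYY2} and \eqref{EYY3} with repeated indices allowed (so that $r$ is the total degree, here $r=4$ and $(-1)^r=1$), each joint moment is bounded above by $\lesssim 1-p$ and below by $\ge -c(1-p)^2$, while every product of two moments is $\lesssim (1-p)^2$. Consequently the whole radicand equals the sum of the degree-four joint moments up to an error $O\big((1-p)^2\cdot\#\big)$, where $\#$ is the number of \Ins configurations involved; it thus remains to estimate $\sum \E[Y_iY_jY_kY_l]$ and the analogous degenerate sums, all of which carry the common factor $1-p$.

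For the upper bound I would bound each joint moment by \eqref{EYY2} and count the inseparable configurations. Since being \Ins is equivalent to connectivity of the intersection graph (two copies adjacent iff they share an edge), any inseparable quadruple can be built by adjoining its copies one at a time so that each new copy shares an edge of size $\ge m$, hence at least $m$ vertices, with the part already built; therefore $v_{H_i\cup H_j\cup H_k\cup H_l}\le v_H+3(v_H-m)=4v_H-3m$. As the number of quadruples spanning $t$ vertices is $\lesssim n^t$ and there are finitely many admissible values $t\le 4v_H-3m$, we get $\sum\E[Y_iY_jY_kY_l]\lesssim (1-p)n^{4v_H-3m}$, and the degenerate (single, pair, triple) sums produce the strictly smaller exponents $v_H$, $2v_H-m$, $3v_H-2m$, all $\le 4v_H-3m$ since $m\le v_H$.

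For the matching lower bound I would restrict $\sum\E[Y_iY_jY_kY_l]$ to the quadruples of copies whose pairwise intersections all coincide with one fixed copy of a smallest edge $e$ (of size $m$) and which are otherwise vertex-disjoint: there are $\gtrsim n^{4v_H-3m}$ of these, they are inseparable, and for each of them the four copies are simultaneously absent as soon as $e$ is missing, so \eqref{EYY3} gives $\E[Y_iY_jY_kY_l]\ge (1-p)-c_2(1-p)^2\ge\tfrac12(1-p)$ once $p$ is close to $1$. Since every joint moment is $\ge -c(1-p)^2$ and the subtracted products are $O((1-p)^2)$, these lower-order negative contributions (of total size $O((1-p)^2 n^{4v_H-3m})$) cannot cancel the positive $\gtrsim (1-p)n^{4v_H-3m}$ coming from the distinguished family. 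The delicate point, and the step I expect to be the main obstacle, is precisely this control of cancellation: one must verify uniformly over the finitely many isomorphism types of inseparable configurations that the genuine size of every term is $O(1-p)$ with the correct positive leading behaviour, so that the single family sharing a minimal edge really governs both bounds; the separation of orders between the $(1-p)$ diagonal contributions and the $(1-p)^2$ cross contributions is what makes the argument close.
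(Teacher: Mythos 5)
Your proposal is correct and follows essentially the same route as the paper's proof: apply \eqref{EYY2} and \eqref{EYY3} to the decomposition of Lemma \ref{lem:4mp-formula}, bound the number of \Ins configurations of $r$ copies by $\lesssim n^{rv_H-(r-1)\min\{|e|:e\in E(H)\}}$, and obtain the lower bound from the family of quadruples whose only common part is a single edge of minimal size, absorbing all remaining (possibly negative) contributions into an $O\big((1-p)^2\big)n^{4v_H-3\min\{|e|:e\in E(H)\}}$ error that is dominated once $p>1-\varepsilon(H)$. The cancellation issue you flag as the "main obstacle" is resolved exactly by the $(1-p)$ versus $(1-p)^2$ separation you describe, which is how the paper closes the argument as well.
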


\begin{proof}
Applying  \eqref{EYY2} to Lemma \ref{lem:4mp-formula} we get for $p\in(0,1)$

	\begin{align}
	&\left|\E\big[(Z_H^n-\E Z_H^n)^4\big]-3(\operatorname{Var} [Z_H^n])^2\right|\\
		&\lesssim (1-p)\bigg[\sum_{i}1
		 + \sum_{\substack{|\{i,j\}|=2\\ {E\(H_i\cap H_j\)\neq\emptyset}}} 1+\sum_{\substack{|\{i,j,k\}|=3\\ {H_i, H_j, H_k \ins}}}1+\sum_{\substack{|\{i,j,k,l\}|=4\\ {H_i, H_j, H_k, H_l \ins}}}1\bigg]\\[6pt]
		 &\lesssim (1-p)\left[n^{v_H}+n^{2v_H-\min\{|e|:e\in E(H)\}}+n^{3v_H-2\min\{|e|:e\in E(H)\}}+n^{4v_H-3\min\{|e|:e\in E(H)\}}\right]\\\label{aux3}
		 &\leq 4(1-p)n^{4v_H-3\min\{|e|: e\in E(H)\}}.
	\end{align}
	Similarly, applying both of the inequalities  \eqref{EYY2} and \eqref{EYY3} to Lemma \ref{lem:4mp-formula}, we may write
	\begin{align*}
	&\E(Z_H^n-\E Z_H^n)^4-3(\operatorname{Var} [Z_H^n])^2\\
		&\geq \sum_{\substack{|\{i,j,k,l\}|=4\\ H_i, H_j, H_k, H_l \ins}}\P\big(H_{i},H_j,H_k,H_{l}\not\in \mathbb H(n,p)\big)+O\big((1-p)^2\big)n^{4v_H-3{\min\{|e|:e\in E(H)\}}}.
	\end{align*}
	Next, we bound the above sum as follows
	\begin{align*}
	&\hspace{-15pt}\sum_{\substack{|\{i,j,k,l\}|=4\\ H_i, H_j, H_k, H_l \ins}}\P\big(H_{i},H_j,H_k,H_{l}\not\in \mathbb H(n,p)\big)\\
	&\geq \sum_{\substack{|\{i,j,k,l\}|=4\\|E({H_i\cap H_j\cap H_k\cap H_l})|=1}}\P\big(H_{i},H_j,H_k,H_{l}\not\in \mathbb H(n,p)\big)\\
	&\geq \sum_{\substack{|\{i,j,k,l\}|=4\\|E({H_i\cap H_j\cap H_k\cap H_l})|=1}}\P\big(H_i\cap H_j\cap H_k\cap H_l\not\in \mathbb H(n,p)\big)\\
	&=  \sum_{\substack{|\{i,j,k,l\}|=4\\|E({H_i\cap H_j\cap H_k\cap H_l})|=1}}(1-p)\approx (1-p)n^{4v_H-3{\min\{|e|:e\in E(H)\}}},
	\end{align*}
	where the last estimate follows from the fact that on every set of  $4v_H-3{\min\{|e|:e\in E(H)\}}$  vertices one can build at least one and unique configuration of four graphs that are isomorphic to $H$ and their only common part is an edge of the possibly smallest  size.  This gives us
		\begin{align*}
	&\E\big[(Z_H^n-\E Z_H^n)^4\big]-3(\operatorname{Var} [Z_H^n])^2
\geq (1-p)n^{4v_H-3{\min\{|e|:e\in E(H)\}}}\(1+O\big((1-p)\big)\).
	\end{align*}
	Combining this with \eqref{aux3} and \eqref{eq:homoVar}, we obtain the assertion of the theorem. 
\end{proof}

\begin{proof}[\textbf{Proof of Theorem \ref{thm:4mp}}] From \eqref{eq:sufficient2} we get
\begin{align*}
d_{W/K}\(\widetilde{Z}_H^n,\mathcal N\)&\lesssim\({ \min\limits_{\substack{F\subseteq H:\,e_F\geq1}}p^{e_F}\,n^{v_{F}}}\)^{-1/2}+\( \min\limits_{\substack{e\in E(H)}}(1-p){n^{|e|}}\)^{-1/2}\\
&\leq \({ \min\limits_{\substack{F\subseteq H:\,e_F\geq1}}(1-p)p^{e_F}\,n^{v_{F}}}\)^{-1/2},
\end{align*}
which is the first estimate in the assertion. We turn our attention to the other one.
In view of Theorems \ref{thm:4mpp->0} and \ref{thm:4mpp->1} it is enough to show that for any $\delta\in(0,1/2)$ \eqref{eq:4mp1} holds whenever $p\in(\delta, 1-\delta)$ (with constants possibly depending on $\delta$). For such $p$ we have
$$\({{(1-p) \min\limits_{\substack{F\subseteq H:\,e_F\geq1}}{p}^{e_F}\,n^{v_{F}}}}\)^{-1}\stackrel{\delta}{\approx }\({{ \min\limits_{\substack{F\subseteq H:\,e_F\geq1}}\,n^{v_{F}}}}\)^{-1}={n^{-\min\{|e|:e\in E(H)\}}},$$
it is therefore sufficient to show that 
\begin{align*}
{\left|\E\[\(\widetilde{Z}_H^n\)^4 \]-3\right|}\stackrel{\delta}{\lesssim }{n^{-\min\{|e|:e\in E(H)\}}}.
\end{align*}
Indeed, it is a consequence of \eqref{aux3} and \eqref{eq:homoVar}.
		\end{proof}

		\section{Necessary and sufficient conditions}
		

We start this section with the case when all edge probabilities are bounded away from zero. 

	\begin{proposition}\label{theo:lind_cond}
		If  $p_{|e|}\geq 1/2$  holds for all $e\in E(H)$, then
		$\widetilde{Z}_H^n\overset{d}{\longrightarrow} \mathcal{N}(0, 1)$ if and only if for every $e\in E(H)$ it holds that
		$$\frac1{(1-p_{|e|})n^{|e|}} \wedge \frac{n^{-|e|}(1-p_{|e|})}{\max_{e'\in E(H)} n^{-|e'|}(1-p_{|e'|})}  \stackrel{n\rightarrow \infty}{\longrightarrow} 0.$$
	\end{proposition}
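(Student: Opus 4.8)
The plan is to settle the two implications separately, obtaining sufficiency from the quantitative bound of Theorem~\ref{theo:mainbounds} and necessity from a Feller--L\'evy/Cram\'er obstruction. For sufficiency, first observe that $p_{|e|}\ge 1/2$ forces $P_F\approx 1$ for every $F$, so $\min_{F}P_Fn^{v_F}\approx n^{\min\{|e|\}}\to\infty$, while the monotonicity recorded in Remark~\ref{rem:var} (adjoining an edge of probability $\ge 1/2$ never increases $n^{-v_F}Q_F/P_F$) shows the variance-defining maximum is attained on single edges, i.e. $\max_{F}n^{-v_F}Q_F/P_F\approx M:=\max_{e\in E(H)}n^{-|e|}(1-p_{|e|})$. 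Substituting these into the Kolmogorov bound of Theorem~\ref{theo:mainbounds} collapses it to $o(1)+\sum_{e}(a_e/M)^{1/3}\wedge(n^{-|e|}/(1-p_{|e|}))^{1/2}$, where $a_e:=n^{-|e|}(1-p_{|e|})$. Since $x\wedge y\to 0$ implies $x^{1/3}\wedge y^{1/2}\to 0$ and the sum is finite, the hypothesis forces $d_K(\widetilde Z_H^n,\mathcal N)\to 0$, hence $\widetilde Z_H^n\overset{d}{\to}\mathcal N$.

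For necessity I would argue by contraposition. Note first that, up to the elementary inequality $M\ge a_e$, the displayed minimum coincides with the Lindeberg--Feller quantity $\min(a_e/M,\,n^{-2|e|}/M)$ of the independent first-order array, so the statement is morally ``normal limit $\Rightarrow$ Lindeberg''. Concretely, suppose the condition fails for some edge $\bar e$ of size $r$; then along a subsequence $a_{\bar e}/M\ge c$ and $n^{-r}/(1-p_{\bar e})\ge c$. The second inequality gives $1-p_{\bar e}\lesssim n^{-r}$, so the expected number of absent $r$-edges $\lambda_n:=\binom{n}{r}(1-p_{\bar e})=O(1)$. By Proposition~\ref{prop:subHoeff} applied with $H'=\{\bar e\}$, the standardised $r$-edge count enters $\widetilde Z_H^n$ as the first-order term $I_{\{\bar e\}}=\alpha_n(\{\bar e\},H)\widetilde Z_{\{\bar e\}}^n$, a function of the $r$-edges alone whose coefficient obeys $\alpha_n(\{\bar e\},H)^2\approx a_{\bar e}/M\gtrsim c$ by \eqref{eq:estalpha}.

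Passing to a further subsequence along which $\lambda_n\to\lambda\in[0,\infty)$ and $\alpha_n(\{\bar e\},H)\to\alpha\ge\sqrt c$, I would split into two cases. If $\lambda>0$, then $\widetilde Z_{\{\bar e\}}^n$ (a standardised binomial in the Poisson regime) converges to a centred unit-variance scaled Poisson variable $\zeta$, which is non-normal; isolating the $\bar e$-component through the decomposition \eqref{eq:decompZa} at threshold $1-\varepsilon$ (with $\bar e\in E(H_n^{>1-\varepsilon})$, the displayed pieces independent and the remainder of variance $\lesssim\varepsilon$) and letting $\varepsilon\to0$ diagonally to annihilate the remainder realises the normal limit as an independent sum $\alpha\zeta+(\text{rest})$, contradicting Cram\'er's decomposition theorem. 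If $\lambda=0$, then $\widetilde Z_{\{\bar e\}}^n\to 0$ in probability while keeping variance $1$; since the Hoeffding components are orthogonal, $V_n:=\widetilde Z_H^n-I_{\{\bar e\}}$ satisfies $\E[V_n^2]=1-\alpha_n(\{\bar e\},H)^2\le 1-c$, yet $V_n\overset{d}{\to}\mathcal N$ by Slutsky, so lower semicontinuity of the second moment under weak convergence gives $1=\E[\mathcal N^2]\le\liminf\E[V_n^2]\le 1-c$, a contradiction.

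I expect the main obstacle to be the decoupling in the $\lambda>0$ case: the orthogonal complement $V_n$ is not independent of $I_{\{\bar e\}}$ (higher-order Hoeffding terms also read the $r$-edges), so Cram\'er's theorem cannot be applied directly and one must genuinely separate an independent block via \eqref{eq:decompZa} and control the coupled remainder uniformly as $\varepsilon\to0$. The companion subtlety is the degenerate regime $\lambda=0$, where the bad component has a point-mass limit but non-vanishing variance; here Cram\'er is useless and the contradiction must instead be extracted from the second-moment lower-semicontinuity estimate above.
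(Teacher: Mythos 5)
Your proof is correct, but it follows a genuinely different route from the paper's. The paper argues entirely within classical CLT theory: since every subhypergraph with at least two edges contains an edge on strictly fewer vertices than the whole subhypergraph, $\Var[I_m]\lesssim 1/n$ for $m\geq 2$, so by Chebyshev and Slutsky $\widetilde Z_H^n$ is asymptotically normal if and only if the first-order projection $I_1=\sum_i a_i\widetilde X_i$ is; this is a sum of independent variables satisfying the Feller--L\'evy negligibility condition ($\max_i \Var[a_i\widetilde X_i]/\Var[I_1]\lesssim 1/n$), so the Lindeberg condition is simultaneously necessary and sufficient, and the rest of the paper's proof is an explicit computation reducing Lindeberg's condition, for these two-valued summands with $p_{|e|}\geq 1/2$, to the displayed minimum tending to zero. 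You instead obtain sufficiency by specializing the Kolmogorov bound of Theorem \ref{theo:mainbounds} (legitimate and non-circular: that theorem is proved in Section 4 without any reference to this proposition), and necessity by contraposition, splitting along subsequences into a Poissonian regime $\lambda>0$ handled by Cram\'er's decomposition theorem and a degenerate regime $\lambda=0$ handled by lower semicontinuity of second moments under weak convergence. Both cases are sound: the $\lambda=0$ argument (orthogonality gives $\E[(\widetilde Z_H^n-I_{\{\bar e\}})^2]\leq 1-c'$ while Slutsky forces this to be at least $1$ in the limit) is complete as stated, and the $\lambda>0$ case is completable exactly along the lines you flag — write $\widetilde Z_H^n=A_n+B_n^\varepsilon+R_n^\varepsilon$ with $A_n\perp B_n^\varepsilon$ and $\Var[R_n^\varepsilon]\lesssim\varepsilon$, stabilize the $n$-dependent partition in \eqref{eq:decompZa} along a further subsequence (a routine refinement you omit), use joint convergence of the independent blocks together with Proposition \ref{prop:variance} (or lower semicontinuity of $d_K$ under weak limits) to get $d_K(\alpha\zeta\oplus\beta^\varepsilon,\mathcal N)\lesssim\varepsilon^{1/3}$, and then let $\varepsilon\to0$ by tightness before invoking Cram\'er. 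The trade-off: the paper's route is shorter, elementary and self-contained, and it explains the condition as \emph{literally} the Lindeberg condition (relevant since, as the introduction notes, Lyapunov's condition fails in this model); your route identifies the concrete probabilistic obstruction — a Poisson-type component produced by a nearly complete layer of edges — but pays for it with Cram\'er's theorem and a compactness/diagonal argument, essentially re-deriving the decomposition machinery that the paper reserves for the necessity part of Theorem \ref{thm:conditions}, where this very proposition is the final ingredient.
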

	\begin{proof}
	We start with showing that   ${\mathrm{Var} [I_m]}\rightarrow 0$ for $m\geq 2$ (cf. \eqref{eq:Z=II}). Indeed, by Theorem 	 \ref{theo:VarI_m} and the observation that  $P_H\approx 1$, for such $m$ it holds that
	\begin{align*}
		\mathrm{Var} [I_m] &\approx  \frac{n^{2v_H}}{\mathrm{Var} Z_H^n}
	\max_{F\subseteq H, e_F=m}\frac{Q_F}{n^{v_F}}\\
	&\leq  \frac{n^{2v_H}}{\mathrm{Var} Z_H^n}
	\max_{F\subseteq H, e_F=m}\frac1n\max _{e\in E(F)}\frac{(1-p_{|e|})}{n^{|e|}}\\
	&\lesssim \frac1n\mathrm{Var} I_1\leq \frac1n,
		\end{align*}
	where the first inequality follows simply from the fact that any hypergraph with at least two edges contains an edge with {fewer} vertices than the whole hypergraph. Consequently, by Chebyshev's inequality, $\widetilde{Z}_H^n-I_1$ converges in probability to zero, and therefore Slutsky's theorem gives us the equivalence
	\begin{align*}
	\widetilde{Z}_H^n\overset{d}{\longrightarrow} \mathcal{N}(0, 1) \ \ \Longleftrightarrow\ \  I_1\overset{d}{\longrightarrow} \mathcal{N}(0, 1).
	\end{align*}
Since 
$$I_1=\sum_{i=1}^{2^n-1}a_i\widetilde X_i,
	$$
	where 
	$$a_i=a_i(n)=\frac{P_H}{\sqrt{\mathrm{Var} [Z_H^n]}} \sqrt{\frac{1-p_{|e_i|}}{p_{|e_i|}}} \sum_{A\subseteq \N\backslash\{i\}}\I _{A\cup \{i\} \sim H},$$
 is a sum {of} independent random variables, we may employ some classical tools. First, we verify the Feller-L\'evy condition. For any $i$ associated with an edge of size that appears in $H$ (for other ones $a_i$ equals zero), we have
 \begin{align}\label{eq:a_i}
 a_i\approx \sqrt{\frac{1-p_{|e_i|}}{{\mathrm{Var} [Z_H^n]}}}\,n^{v_H-|e_i|},
\end{align}
 and therefore 
 \begin{align*}
 \(\mathrm{Var} [I_1]\)^{-1}\mathrm{Var}\[a_i \widetilde X_i\]
 &\approx \(\frac{n^{2v_H}}{\mathrm{Var} [Z_H^n]}\max_{e\in E(H)}\frac{1-p_{|e|}}{n^{|e|}}\)^{-1}
 \frac{1-p_{|e_i|}}{{\mathrm{Var} [Z_H^n]}}\(n^{v_H-|e_i|}\)^2\\
  &= \[\(\max_{e\in E(H)}\frac{1-p_{|e|}}{n^{|e|}}\)^{-1}
 \frac{1-p_{|e_i|}}{{n^{|e_i|}}}\]\frac1{n^{|e_i|}}\leq \frac1{n^{|e_i|}}\leq \frac1n.
  \end{align*}
  Thus, the maximum, with respect to $i$, of the above expressions  tends to zero. This means that the Lindeberg condition 
 \begin{align}\label{eq:Lind}
\forall _{ \varepsilon >0}\lim_{n\rightarrow\infty}\frac1{\mathrm{Var}[I_1]}\sum_{i=1}^{2^{n}-1}\mathbb{E}\[\(a_i \widetilde X_i\)^2\I _{\{|a_i \widetilde X_i|> \varepsilon \sqrt{\mathrm{Var}[I_1]}\}}\]=0
 \end{align} 
  is both: sufficient and necessary for $I_1$, and hence for $\widetilde Z_H^n$ as well,  to be asymptotically normal.  Using the fact that $X_i\in\{0,1\}$, we rewrite the above expected value as follows 
  \begin{align}\nonumber
  \mathbb{E}\[\(a_i \widetilde X_i\)^2\I _{\{|a_i \widetilde X_i|> \varepsilon \sqrt{\mathrm{Var}[I_1]}\}}\]&=
    \mathbb{E}\[\I _{\{X_i=0\}} \frac{a^2_ip_{|e_i|}}{1-p_{|e_i|}}\I _{\left\{|a_i| \sqrt{p_{|e_i|}/(1-p_{|e_i|})}> \varepsilon \sqrt{\mathrm{Var}[I_1]}\right\}}\]\\\nonumber
    &\ \ \ +\mathbb{E}\[\I _{\{X_i=1\}} \frac{a^2_i(1-p_{|e_i|})}{p_{|e_i|}}\I _{\left\{|a_i| \sqrt{(1-p_{|e_i|})/p_{|e_i|}}> \varepsilon \sqrt{\mathrm{Var}[I_1]}\right\}}\]\\[6pt]\nonumber
    &=
   {a^2_ip_{|e_i|}}\I _{\left\{|a_i| \sqrt{p_{|e_i|}/(1-p_{|e_i|})}> \varepsilon \sqrt{\mathrm{Var}[I_1]}\right\}}\\\nonumber
    &\ \ \ +{a^2_i(1-p_{|e_i|})}\I _{\left\{|a_i| \sqrt{(1-p_{|e_i|})/p_{|e_i|}}> \varepsilon \sqrt{\mathrm{Var}[I_1]}\right\}}\\\label{eq:aux1}
    &\approx {a^2_i}\I _{\left\{|a_i| \sqrt{p_{|e_i|}/(1-p_{|e_i|})}> \varepsilon \sqrt{\mathrm{Var}[I_1]}\right\}},
  \end{align}
where the last estimate comes from the inequality  $p_{|e_i|}/(1-p_{|e_i|})\geq (1-p_{|e_i|})/p_{|e_i|}$, which is valid for $p_{|e_i|}\geq1/2$. Furthermore, 
  since the limit in  \eqref{eq:Lind}  is supposed to be true for any $\varepsilon>0$, we can equivalently replace $a_i=a_i(n)$ in the indicator with any other sequence $b_i(n)$ such that $a_i(n)\approx b_i(n)$. Consequently, by \eqref{eq:a_i},\eqref{eq:aux1} and the estimate $\mathrm{Var}[I_1]\approx 1$, the condition \eqref{eq:Lind} is equivalent to
  \begin{align*}
\forall _{ \varepsilon >0}\lim_{n\rightarrow\infty}\sum_{i\in \mathcal E_n}{\frac{1-p_{|e_i|}}{{\mathrm{Var} [Z_H^n]}}}\,n^{2v_H-2|e_i|}\I _{\{n^{v_H-|e_i|}\,>\, \varepsilon \sqrt{\mathrm{Var}[Z_H^n]}\}} = 0,
  \end{align*}
 where $\mathcal E_n=\{i\in [2^n-1]: \exists_{e\in E(H)}|e_i|=|e|  \}$. Observing that in $K_n^*$ there are ${n\choose |e|}\approx n^{|e|}$ edges of the same size as any given $e\in E(H)$, we  divide the above sum into parts associated with sizes of edges and get
  $$\sum_{i\in \mathcal E_n}{\frac{1-p_{|e_i|}}{{\mathrm{Var} [Z_H^n]}}}\,n^{2v_H-2|e_i|}\I _{\{n^{v_H-|e_i|}\,>\, \varepsilon \sqrt{\mathrm{Var}[Z_H^n]}\}}
  \approx \sum_{e\in E(H)}{\frac{1-p_{|e|}}{{\mathrm{Var} [Z_H^n]}}}\,n^{2v_H-|e|}\I _{\{n^{v_H-|e|}\,>\, \varepsilon \sqrt{\mathrm{Var}[Z_H^n]}\}}.$$
  Let us note that the sums above may not be equal,  since there may be several edges in $H$ of the same size. Denoting  additionally $t_n=t_n(e)={\frac{1-p_{|e|}}{{\mathrm{Var} [Z_H^n]}}}\,n^{2v_H-|e|}$,  we reformulate further the Lindeberg condition into
   \begin{align}\nonumber
&\forall _{ \varepsilon >0}\lim_{n\rightarrow\infty}\sum_{e\in E(H)}t_n\I _{\{t_n\,>\,\varepsilon^2 (1-p_{|e|})n^{|e|}\}}=0\\\nonumber
&\Leftrightarrow\forall _{ e\in E(H)}\forall _{ \varepsilon >0}\lim_{n\rightarrow\infty}t_n\I _{\{t_n\,>\,\varepsilon^2 (1-p_{|e|})n^{|e|}\}}=0\\\label{eq:aux2}
&\Leftrightarrow\forall _{ e\in E(H)}\lim_{n\rightarrow\infty}\(t_n\wedge\frac1{(1-p_{|e|})n^{|e|}}\)=0.
  \end{align}
  Let us explain the last equivalence. By \eqref{eq:VarZ} we obtain
  \begin{equation}\label{eq:t_n}
  	t_n\approx \frac{ (1-p_{|e|})n^{-|e|}}{\max\limits_{e'\in E(H)}(1-p_{|e'|})n^{-|e'|}}
  \end{equation}
  and the implication $\Leftarrow$  follows from the fact that $t_n$ has bounded order of magnitude.
  To deduce the converse one, let us observe that if there was an increasing sequence of natural numbers $({n_k})_{k\geq1}$ and $e\in E(H)$ such that $t_{n_k}\wedge \frac1{(1-p_{|e|})n_k^{|e|}}> \delta$ for some $\delta > 0$ and all $k\geq1$, we would get for $\varepsilon =\delta$ 
  $$t_{n_k}\I _{\{t_{n_k}\,>\,\varepsilon^2 (1-p_{|e|}){n_k}^{|e|}\}}=t_{n_k}\I _{\big\{t_{n_k}/\((1-p_{|e|}){n_k}^{|e|}\)\,>\,\delta^2 \big\}}=t_{n_k}\not\rightarrow0.$$ 
  Finally, \eqref{eq:t_n} applied to \eqref{eq:aux2} gives us the assertion of the proposition.
  \end{proof}

%
%
	\begin{proof}[\textbf{Proof of Theorem \ref{thm:conditions}}]
		The sufficiency  of the conditions for the asymptotic normality of $\widetilde{Z}_H^n$ follows from Theorem \ref{theo:mainbounds}. We will therefore focus on the necessity of the conditions. 

		($\Longrightarrow$) 
		Assume $\widetilde{Z}_H^n \overset{d}{\rightarrow} \mathcal{N}$. Analogously as in the proof of the implication  {\it 1) $\Rightarrow$ 2)} of Theorem \ref{thm:homoequiv} presented in Section \ref{sec:homo}, by the estimates \eqref{eq:Eestgen} and \eqref{eq:Vestgen} we get
		\begin{align}\label{eq:aux5}
		\frac{\E[Z_H^n]}{\sqrt{\mathrm{Var}[Z_H^n]}}\approx \sqrt{ \min\limits _{F\subseteq H:\, e_F\geq1}\frac{P_Fn^{v_F}}{Q_F}}\ \longrightarrow \infty.
		\end{align}
		If $F\subseteq H$ contains only edges $e$ such that $p_{|e|}>1/2$, $e\in E(F)$, then 
		$$P_Fn^{v_F}\geq \frac {n^{v_F}}{2^{e_F}}\geq \frac{n}{2^{e_H}}.$$ 
Consider now  $F\subseteq H$ with at least one edge such that $p_{|e|}\leq 1/2$ and recall that $H_n^{\leq1/2}\subseteq H$	stands for the hypergraph consisting of those edges  $e\in E(H)$ for which $p_{|e|}\leq1/2$ ({cf.} \eqref{eq:H^}). For such a hypergraph $F$ we have
\begin{align*}
{P_Fn^{v_F}}&\geq \frac{P_{F\cap H_n^{\leq1/2}}}{2^{e_H-e_{F\cap H_n^{\leq1/2}}}}n^{v_{F\cap H_n^{\leq1/2}}}\frac1{Q_{F\cap H_n^{\leq1/2}}2^{e_{F\cap H_n^{\leq1/2}}}}=2^{-e_H}\frac{P_{F\cap H_n^{\leq1/2}}n^{v_{F\cap H_n^{\leq1/2}}}}{Q_{F\cap H_n^{\leq1/2}}}\\
&\geq 2^{-e_H}\min\limits _{F'\subseteq H:\, e_{F'}\geq1}\frac{P_{F'}n^{v_{F'}}}{Q_{F'}}.
\end{align*}
 Consequently 
  $$\min\limits _{F\subseteq H:\, e_{F}\geq1}P_{F}n^{v_{F}}\gtrsim n\wedge\(\min\limits _{F\subseteq H:\, e_{F}\geq1}\frac{P_{F}n^{v_{F}}}{Q_{F}}\),$$
	which, in view of \eqref{eq:aux5},  gives us the first condition in the assertion. The rest of the proof is devoted to deriving the other one, i.e., the condition \eqref{long_necessary}.

		 We will show that any sequence of positive integers diverging to infinity contains a subsequence for which the limit \eqref{long_necessary} holds. 
		 First, for any such a sequence we can choose  a subsequence $(n_k)_{k\geq1}$ such that for every $e\in E(H)$ the probabilities $p_{|e|}=p_{|e|}(n_k)$ are convergent as $k\rightarrow\infty$. Depending on the values of the limits, we decompose the graph $H$ as
		$$H=\bar H^{<1}\cup \bar H^1, $$
		where the hypergraphs $\bar H^{<1}$ and $\bar H^1$ share no common edge (i.e., $e_{\bar H^{<1}\cap \bar H^1}=0$), contain no isolated vertices and
		
		\begin{itemize}
			\item $\lim_{k\rightarrow\infty}p_{|e|}(n_k) < 1$ whenever $e\in \bar H^{<1}$,
			\item $\lim_{k\rightarrow\infty}p_{|e|}(n_k) = 1$ whenever $e\in \bar H^{1}$.
		\end{itemize}
	 Additionally, let us denote by $\mathcal H$ {the} family of nontrivial subhypergraphs of $H$ without isolated vertices, that are  subhypergraphs neither of $\bar H^{<1}$ nor of $\bar H^{1}$, 
		\begin{align*}
		\mathcal H:=\{F\subseteq H: F\not\subseteq \bar H^{<1}, \ F\not\subseteq \bar H^1, \ e_F\geq1 \}.
		\end{align*}
		This and Proposition \ref{prop:subHoeff} allow us to represent $\widetilde{Z}_H^n$ as follows 
		\begin{align}\nonumber
		\widetilde{Z}_H^n&=\sum_{F\subseteq \bar H^{<1}}I_F+\sum_{F\subseteq \bar H^{1}}I_F+\sum_{F\in \mathcal H}I_F\\
		&\label{eq:decompZ2}
		=\alpha^{<1}_n\widetilde Z_{\bar H^{<1}}^n
		+\alpha^{1}_n\widetilde Z_{\bar H^{1}}^n
		+\sum_{F\in \mathcal H}I_F,
		\end{align}
		where
		\begin{align*}
		\alpha^{<1}_n=C_{\bar H^{<1}}{n- v_{\bar H^{<1}}\choose v_H-v_{\bar H^{<1}}}\frac{P_{H}\sqrt{\mathrm{Var} [Z_{\bar H^{<1}}^n]}}{P_{\bar H^{<1}}\sqrt{\mathrm{Var} [Z_{H}^n]}},\hspace{40pt}
		\alpha^{1}_n=C_{\bar H^{1}}{n- v_{\bar H^{1}}\choose v_H-v_{\bar H^{1}}}\frac{P_{H}\sqrt{\mathrm{Var} [Z_{\bar H^{1}}^n]}}{P_{\bar H^{1}}\sqrt{\mathrm{Var} [Z_{H}^n]}}.
		\end{align*}
		Since for any $F\in \mathcal H$ the graphs $F\cap \bar H^{<1}$ and $F\cap H^{1}$ contain at least one edge,  it holds that 
		\begin{align*}
		\frac{Q_F}{P_Fn^{v_F}}&=	\frac{Q_{F\cap \bar H^{<1}}}{P_{F\cap \bar H^{<1}}\,n^{v_{F\cap \bar H^{<1}}}}\frac{Q_{F\cap \bar H^{1}}}{P_{F\cap \bar H^{1}}\, n^{v_F- v_{F\cap \bar H^{<1}}}}
		\leq \(\max _{K\subseteq H:\, e_K\geq1}\frac{Q_K}{P_Kn^{v_K} }\)\(\max_{K\subseteq \bar H^1,\,e_K\geq1}\frac{Q_K}{P_K}\),
		\end{align*}
		and hence, by Theorem \ref{theo:VarI_m}, we obtain the following bound for the variance of the last sum in \eqref{eq:decompZ2}
		\begin{align*}
		\mathrm{Var}\[\sum_{F\in \mathcal H}I_F\]&=\sum_{F\in \mathcal H}\mathrm{Var}\[I_F\]\approx \frac{P_H^2n_k^{2v_H}}{\mathrm{Var} [Z_H^n]}\sum_{F\in \mathcal H}
			\frac{Q_F}{P_Fn_k^{v_F}}\lesssim \max_{K\subseteq H^1,\,e_K\geq1}\frac{Q_K}{P_K}\stackrel{k\rightarrow\infty}{\longrightarrow} 0,
		\end{align*}
		where the last limit follows from the fact that for any $K\subseteq \bar H^1$ one has $Q_K\rightarrow0$ and $P_K\rightarrow1$. Thus,  the term $\sum_{F\in \mathcal H}I_F$ has no influence on asymptotic normality of $\widetilde{Z}_H^{n_k}$. We therefore  conclude 
		\begin{align}\label{eq:Z+Z}
		\alpha^{<1}_{n_k}\widetilde Z_{\bar H^{<1}}^{n_k}
		+\alpha^{1}_{n_k}\widetilde Z_{\bar H^{1}}^{n_k}\stackrel{d}{\longrightarrow}\mathcal N.
		\end{align}
	Observe now that since $\{I_F\}_{F\subseteq H}$ are uncorrelated and $\Var{\widetilde{Z}_H^n}=1$, then \eqref{eq:decompZ2} implies that $0\leq \alpha^{<1}_{n_k}, \alpha^{1}_{n_k}\leq 1$. This ensures existence of a subsequence $n_{k_l}$ such that 
$$\lim_{l\rightarrow \infty}\alpha^{<1}_{n_{k_l}}=\alpha^{<1},\hspace{40pt}
\lim_{l\rightarrow \infty}\alpha^{1}_{n_{k_l}}=\alpha^{1},$$
for some constants $0\leq \alpha^{<1}, \alpha^{1}\leq1$.

We are now prepared to deduce the condition \eqref{long_necessary}. It is clearly satisfied for $e\in E(\bar H^{<1})$ since then $n_k^{|e|}(1-p_{|e|})\rightarrow\infty$ as the expression in the parentheses is bounded away from $0$. Consider  now any $e\in E(\bar H^1)$.  If $\alpha^1=0$, then \eqref{long_necessary} is  satisfied due to \eqref{eq:estalpha}. If $\alpha^1\neq0$, we take advantage of   the already proven condition  $ \min\limits _{F\subseteq H:\, e_F\geq1}P_Fn_k^{v_F}\longrightarrow \infty$, but  applied to $\bar H^{<1}$. Since all $p_{|e|}$, $e\in E(\bar H^{<1})$, are bounded away from $1$ for almost all $n_k$, the aforementioned condition is by Theorem \ref{theo:sufficient} a  sufficient condition for asymptotic normality of $\widetilde Z_{\bar H^{<1}}^{n_k}$.  
Combining this with \eqref{eq:Z+Z} and independence $\widetilde Z_{\bar H^{<1}}^{n_k}$  of  $\widetilde Z_{\bar H^{1}}^n$, we conclude $\widetilde Z_{\bar H^{1}}^{n_k}\stackrel{d}{\rightarrow}\mathcal N$. Then, Proposition \ref{theo:lind_cond} together with \eqref{eq:Vestgen} give us
\begin{align*}
\frac{n_k^{-|e|} (1-p_{|e|})}{\max_{F\subseteq H:\,e_F\geq1} n_k^{-v_F}Q_F/P_F}&=\frac{n_k^{-|e|} (1-p_{|e|})}{\max_{e'\in E(H^1)} n_k^{-|e'|}(1-p_{|e'|})}\frac{\max_{e'\in E(\bar H^1)} n_k^{-|e'|}(1-p_{|e'|})}{\max_{F\subseteq H:\,e_F\geq1} n_k^{-v_F}Q_F/P_F}\\
&\approx\frac{n_k^{-|e|} (1-p_{|e|})}{\max_{e'\in E(\bar H^1)} n_k^{-|e'|}(1-p_{|e'|})}(\alpha_{n_k}^1)^2\longrightarrow\infty.
\end{align*}
 This ends the proof.
	\end{proof}

		\begin{proof}[\textbf{Proof of Theorem \ref{thm:homoequiv}}]\ \\
		{\it 1) $\Rightarrow$ 2).} Already proven just after formulation of the theorem.\\
		{\it 2) $\Rightarrow$ 3).} Since 
		\begin{align*}(1-p)\min_{\substack{F\subseteq H:\,e_F\geq1}}p^{e_F}n^{v_F}&\approx\left\{
		\begin{array}{ll}
		 \min_{\substack{F\subseteq H:\,e_F\geq1}}p^{e_F}n^{v_F},&\text{ for }p\leq\frac12,\\[7pt]
		 (1-p)n^{\min\{|e|:e\in E(H)\}},&\text{ for }p>\frac12,
		\end{array} 
	\right.
	\end{align*} 
	we have
		$$(1-p)\min_{\substack{F\subseteq H:\,e_F\geq1}}p^{e_F}n^{v_F}\geq \(\min_{\substack{F\subseteq H:\,e_F\geq1}}p^{e_F}n^{v_F}\)\wedge\((1-p)n^{\min\{|e|:e\in E(H)\}}\),$$
		which tends to infinity in view of {\it 2)}.\\
		Implications {\it 3) $\Rightarrow$ 4)} and {\it 4) $\Rightarrow$ 1)} follow from Theorem \ref{thm:4mp}, which is proven in the previous section.
		\end{proof}

\section*{Acknowledgements}	
	Wojciech Michalczuk and Grzegorz Serafin were supported  by the National Science Centre, Poland, grant no. 2021/43/D/ST1/03244.

\section{Appendix}
Surprisingly, the literature on bounds on distances between sums of random variables seems to be very limited. In this section we derive some inequalities that are crucial for the proofs of the main results of the article.

\begin{proposition}\label{prop:variance}
For any random variables $X$ and $Y$ such that $\E[Y]=0$ and $\E[Y^2]<\infty$, we have
\begin{align}
d_W(X+Y,\,\mathcal N)&\leq d_W\( X,\,\mathcal N\)+(\mathrm{Var}[Y])^{1/2},\\\label{eq:1/3}
d_K(X+Y,\,\mathcal N)&\leq d_K\( X,\,\mathcal N\)+\tfrac43(\mathrm{Var}[Y])^{1/3}.
\end{align}
\end{proposition}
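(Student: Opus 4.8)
The plan is to establish the two inequalities separately. The Wasserstein bound is essentially immediate from the dual definition of $d_W$, whereas the Kolmogorov bound calls for a truncation (smoothing) argument followed by an optimisation over the truncation level, which is the only place where genuine care is needed.

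For the Wasserstein distance I would exploit that $d_W$ is a genuine metric and write $d_W(X+Y,\mathcal N)\le d_W(X+Y,X)+d_W(X,\mathcal N)$. To control the first term, note that for any $h$ with $\text{Lip}(h)\le 1$ we have $|\E[h(X+Y)]-\E[h(X)]|\le\E|h(X+Y)-h(X)|\le\E|Y|$, and since $\E[Y]=0$, Cauchy--Schwarz (or Jensen) gives $\E|Y|\le(\E[Y^2])^{1/2}=(\mathrm{Var}[Y])^{1/2}$. Taking the supremum over such $h$ yields $d_W(X+Y,X)\le(\mathrm{Var}[Y])^{1/2}$, which is the first assertion.

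For the Kolmogorov distance I would fix $t\in\R$ and a level $\varepsilon>0$ and split on the event $\{|Y|\le\varepsilon\}$. Since $\{X+Y\le t\}\cap\{|Y|\le\varepsilon\}\subseteq\{X\le t+\varepsilon\}$, we get $\P(X+Y\le t)\le\P(X\le t+\varepsilon)+\P(|Y|>\varepsilon)$. Replacing $X$ by $\mathcal N$ at the cost of $d_K(X,\mathcal N)$, using that the standard normal c.d.f. is $1/\sqrt{2\pi}$-Lipschitz (its density is bounded by $1/\sqrt{2\pi}$) so that $\P(\mathcal N\le t+\varepsilon)-\P(\mathcal N\le t)\le\varepsilon/\sqrt{2\pi}$, and applying Chebyshev's inequality $\P(|Y|>\varepsilon)\le\mathrm{Var}[Y]/\varepsilon^2$, I obtain
\begin{equation*}
\P(X+Y\le t)-\P(\mathcal N\le t)\le d_K(X,\mathcal N)+\frac{\varepsilon}{\sqrt{2\pi}}+\frac{\mathrm{Var}[Y]}{\varepsilon^2}.
\end{equation*}
The matching lower bound follows symmetrically from $\{X\le t-\varepsilon\}\cap\{|Y|\le\varepsilon\}\subseteq\{X+Y\le t\}$, so the same right-hand side controls $|\P(X+Y\le t)-\P(\mathcal N\le t)|$ uniformly in $t$.

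The remaining, and only delicate, step is to optimise the free parameter $\varepsilon$ so as to reach the clean constant $\tfrac43$. Minimising $\varepsilon\mapsto\varepsilon/\sqrt{2\pi}+\mathrm{Var}[Y]/\varepsilon^2$ --- for instance by writing $\varepsilon/\sqrt{2\pi}=\varepsilon/(2\sqrt{2\pi})+\varepsilon/(2\sqrt{2\pi})$ and applying the AM--GM inequality to the three resulting terms, or equivalently by taking $\varepsilon=(2\sqrt{2\pi}\,\mathrm{Var}[Y])^{1/3}$ --- gives the value $\tfrac32\pi^{-1/3}(\mathrm{Var}[Y])^{1/3}$. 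Since $\pi\ge(9/8)^3$, we have $\pi^{-1/3}\le 8/9$, whence $\tfrac32\pi^{-1/3}\le\tfrac32\cdot\tfrac89=\tfrac43$. Taking the supremum over $t$ then yields $d_K(X+Y,\mathcal N)\le d_K(X,\mathcal N)+\tfrac43(\mathrm{Var}[Y])^{1/3}$, as claimed. The main obstacle is simply the non-smoothness of the indicator underlying $d_K$, which the $\varepsilon$-truncation circumvents; once the truncation is in place, every remaining estimate is elementary.
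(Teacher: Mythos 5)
Your proof is correct and follows essentially the same route as the paper: triangle inequality plus $\E|Y|\leq(\Var[Y])^{1/2}$ for the Wasserstein bound, and an $\varepsilon$-split combined with Chebyshev's inequality and the $1/\sqrt{2\pi}$-Lipschitz normal c.d.f., followed by optimisation in $\varepsilon$, for the Kolmogorov bound. The only difference is bookkeeping: your one-sided splits incur the Chebyshev term once rather than twice (the paper arrives at $2\Var[Y]/\varepsilon^2$), so your optimised constant $\tfrac32\pi^{-1/3}$ is slightly sharper than the paper's $3(4\pi)^{-1/3}$, and both are then rounded up to $\tfrac43$.
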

\begin{proof}Regarding  the Wasserstein distance, we have
\begin{align}
d_W(X+Y,\,\mathcal N)&\leq d_W(X+Y,X)+d_W(X,\,\mathcal N)\\
&\leq \sup_{\text{Lip}(h)\leq1}\E\[\left|h(X+Y)-h(X)\right|\]+d_W(X,\,\mathcal N)\\
&\leq \E\[\left|Y\right|\]+d_W(X,\,\mathcal N)\leq (\mathrm{Var}[Y])^{1/2}+d_W(X,\,\mathcal N),
\end{align}
as required. Let us turn our attention to the bound for the Kolmogorov distance. 
 For any $\varepsilon>0$ we have
\begin{align*}
\big|\P(X+Y \leq t)-\P(\mathcal N \leq t)\big|&\leq \big|\P(X+Y \leq t, |Y|\leq \varepsilon)-\P(\mathcal N \leq t)\big|+\P(|Y|>\varepsilon).
\end{align*}
Since
\begin{align}
&\P(X+Y\leq t, |Y|\leq \varepsilon)-\P(\mathcal N\leq t)\\
&\leq  \P(X-\varepsilon \leq t, |Y|\leq \varepsilon)-\P(\mathcal N \leq t)\\
&\leq  \P(X-\varepsilon \leq t)-\P(\mathcal N \leq t)\\
&=  \big(\P(X \leq t+\varepsilon)-\P(\mathcal N \leq t+\varepsilon)\big)+\big(\P(\mathcal N \leq t+\varepsilon)-\P(\mathcal N \leq t)\big)\\
&\leq  d_K\(X,\,\mathcal N\)+\frac{\varepsilon}{\sqrt{2 \pi}},
\end{align}
and, similarly, 
\begin{align}
&\P(X+Y \leq t, |Y|\leq \varepsilon)-\P(\mathcal N \leq t)\\
&\geq  \P(X+\varepsilon  \leq t, |Y|\leq \varepsilon)-\P(\mathcal N \leq t)\\
&\geq  \P(X+\varepsilon  \leq t)-\P\(|Y|> \varepsilon\)-\P(\mathcal N \leq t)\\
&=  \big(\P(X \leq t-\varepsilon)-\P(\mathcal N \leq t-\varepsilon)\big)-\P\(|Y|> \varepsilon\)-\big(\P(\mathcal N \leq t)-\P(\mathcal N \leq t-\varepsilon)\big)\\
&\geq  -d_K\(X,\,\mathcal N\)-\P\(|Y|> \varepsilon\)-\frac{\varepsilon}{\sqrt{2 \pi}},
\end{align}
we have
\begin{align*}
d_K(X+Y,\,\mathcal N)
&\leq d_K(X,\,\mathcal N)+2\P\(|Y|> \varepsilon\)+\frac{\varepsilon}{\sqrt{2 \pi}}\\
&\leq d_K(X,\,\mathcal N)+2\frac{\mathrm{Var}[Y]}{\varepsilon^2}+\frac{\varepsilon}{\sqrt{2 \pi}},
\end{align*}
were we applied Chebyshev's inequality in the last inequality. Due to arbitrariness of $\varepsilon$ and the fact $\min\{\frac a{\varepsilon^2}+\varepsilon:\varepsilon>0\}=3(a/4)^{1/3}$, $a>0$, we get
\begin{align*}
d_K(X+Y,\,\mathcal N)
&\leq d_K(X,\,\mathcal N)+\frac{3\(\mathrm{Var}[Y]\)^{1/3}}{\(4  \pi\)^{1/3}}\leq d_K(X,\,\mathcal N)+\tfrac43\(\mathrm{Var}[Y]\)^{1/3},
\end{align*}
which ends the proof.
\end{proof}

\begin{remark} Let us show that the power $\tfrac13$ in \eqref{eq:1/3}  is optimal. Namely, for 
 $X\sim \mathcal N(0,1)$ and  $\varepsilon>0$  define 
$$Y=-\varepsilon\mathbf1_{\{X\in(0,\varepsilon)\}}+\varepsilon\mathbf1_{\{X\in(-\varepsilon-\delta,-\varepsilon)\}},$$
where $\delta>0$ is chosen such that  $\P\(X\in(0, \varepsilon)\)=\P\(X\in(-\varepsilon-\delta,-\varepsilon)\)$.
Then $\E\[Y\]=0$ and \\  $$\text{Var}[Y]=2  \varepsilon ^2\P\(X\in(0, \varepsilon)\)\leq \sqrt{\frac{2}{\pi}}\varepsilon^3.$$
Noting additionally that $d_K(X,\,\mathcal N)=0$, we conclude 
\begin{align*}
d_K\(X+Y, \mathcal N\)&\geq |\P\(X+Y\leq 0\)-\P\(\mathcal N\leq0\)|=\P\(X]leq\varepsilon\)-\P\(\mathcal N\leq0\)\\
&=\P\(X\in(0, \varepsilon)\) \geq \varepsilon \frac{e^{-\varepsilon^2/2}}{\sqrt{2\pi}}
\geq  \frac{e^{-\varepsilon^2/2}}{{(4\pi)^{1/3}}}\(\text{Var}[Y]\)^{1/3}.
\end{align*}
Since additionally $ d_K(X,\,\mathcal N)=0$, one can see that $\(\text{Var}[Y]\)^{1/3}$ is of optimal order for $\text{Var}[Y]$ close to zero.
\end{remark}

\begin{cor}\label{cor:variance}
For uncorrelated random variables $X$ and $Y$ such that $\E[X]=\E[Y]=0$ and $\mathrm{Var}[X]+\mathrm{Var}[Y]=1$, we have
\begin{align}
d_W(X+Y,\,\mathcal N)&\leq d_W\( \frac{X}{\sqrt{\mathrm{Var}[X]}},\,\mathcal N\)+2(\mathrm{Var}[Y])^{1/2},\\\label{eq:1/3cor}
d_K(X+Y,\,\mathcal N)&\leq d_K\( \frac{X}{\sqrt{\mathrm{Var}[X]}},\,\mathcal N\)+3(\mathrm{Var}[Y])^{1/3}.
\end{align}
\end{cor}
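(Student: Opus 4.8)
The plan is to deduce the Corollary directly from Proposition \ref{prop:variance} by isolating the standardized main term. Assuming $\mathrm{Var}[X]>0$ (otherwise the right-hand sides are undefined), set $W:=X/\sqrt{\mathrm{Var}[X]}$, which has unit variance, and write
$$X+Y=W+Z,\qquad Z:=\(\sqrt{\mathrm{Var}[X]}-1\)W+Y.$$
This is mere algebra, since $X=\sqrt{\mathrm{Var}[X]}\,W$. Because $\E[W]=\E[Y]=0$, we have $\E[Z]=0$, so $Z$ is an admissible centered perturbation for Proposition \ref{prop:variance} and $W$ plays the role of the base variable.

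The next step is to compute $\mathrm{Var}[Z]$. Expanding and using that $X$ and $Y$ are uncorrelated (hence so are $W$ and $Y$, as $\mathrm{Cov}[W,Y]=\mathrm{Cov}[X,Y]/\sqrt{\mathrm{Var}[X]}=0$) makes the cross term vanish, leaving
$$\mathrm{Var}[Z]=\(\sqrt{\mathrm{Var}[X]}-1\)^2+\mathrm{Var}[Y].$$
Writing $a=\mathrm{Var}[X]$ and using the normalization $\mathrm{Var}[Y]=1-a$, this collapses to $\mathrm{Var}[Z]=2(1-\sqrt a)$. The one elementary inequality needed is
$$1-\sqrt a=\frac{1-a}{1+\sqrt a}\leq 1-a=\mathrm{Var}[Y],$$
valid since $1+\sqrt a\geq1$, which yields the clean bound $\mathrm{Var}[Z]\leq 2\,\mathrm{Var}[Y]$.

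Finally I would apply Proposition \ref{prop:variance} with $W$ in place of $X$ and $Z$ in place of $Y$, and substitute the variance bound. For the Wasserstein distance the perturbation term becomes $(\mathrm{Var}[Z])^{1/2}\leq\sqrt2\,(\mathrm{Var}[Y])^{1/2}\leq 2\,(\mathrm{Var}[Y])^{1/2}$, giving \eqref{eq:1/3cor}'s companion; for the Kolmogorov distance it becomes $\tfrac43(\mathrm{Var}[Z])^{1/3}\leq\tfrac43\,2^{1/3}(\mathrm{Var}[Y])^{1/3}\leq 3\,(\mathrm{Var}[Y])^{1/3}$, since $\tfrac43\,2^{1/3}<3$. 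I expect no genuine obstacle: the argument is short bookkeeping on top of Proposition \ref{prop:variance}, and the only points requiring care are the use of uncorrelatedness to drop the cross term in $\mathrm{Var}[Z]$ and the verification that the final numerical constants $\sqrt2$ and $\tfrac43\,2^{1/3}$ are dominated by $2$ and $3$ respectively.
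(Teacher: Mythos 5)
Your proof is correct, and it differs from the paper's in a small but genuine way. The paper deduces the corollary from Proposition \ref{prop:variance} by two sequential applications: first with base variable $X/\sqrt{\Var[X]}$ and perturbation $-(1-\sqrt{\Var[X]})\,X/\sqrt{\Var[X]}$, which gives $d_K(X,\mathcal N)\le d_K(X/\sqrt{\Var[X]},\mathcal N)+\tfrac43(\Var[Y])^{1/3}$ via the same identity $1-\sqrt{\Var[X]}=\Var[Y]/(1+\sqrt{\Var[X]})\le\Var[Y]$ that you use, and then once more to pass from $X$ to $X+Y$, accumulating the constant $\tfrac43+\tfrac43=\tfrac83\le 3$ (respectively $2$ for $d_W$). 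You instead merge the rescaling error and $Y$ into the single perturbation $Z=(\sqrt{\Var[X]}-1)\,X/\sqrt{\Var[X]}+Y$ and apply the proposition only once. Two consequences are worth noting. First, your route genuinely needs the uncorrelatedness hypothesis, to kill the cross term in $\Var[Z]$, whereas the paper's two-step argument never uses it and hence proves the statement under the assumptions $\E[X]=\E[Y]=0$ and $\Var[X]+\Var[Y]=1$ alone. Second, your merged perturbation has variance bounded by $2\Var[Y]$, linear in $\Var[Y]$, so the exponent $1/3$ and the constants $\sqrt2\le 2$ and $\tfrac43\,2^{1/3}\le 3$ come out directly; the paper's first perturbation instead has variance $(1-\sqrt{\Var[X]})^2\le(\Var[Y])^2$, and it must invoke $\Var[Y]\le 1$ to downgrade $(\Var[Y])^{2/3}$ to $(\Var[Y])^{1/3}$. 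In short, your argument is marginally tighter in its constants, while the paper's is marginally more general in its hypotheses; both rest on the same elementary algebra.
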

\begin{proof}
We present only the proof for Kolmogorov distance, as the one for the Wasserstein distance is analogous and even slightly simpler.  From the previous proposition we get
\begin{align}
d_K(X,\,\mathcal N)&=d_K\(\frac{X}{\sqrt{\mathrm{Var}[X]}}-\(1-\sqrt{\mathrm{Var}[X]}\)\frac{X}{\sqrt{\mathrm{Var}[X]}},\,\mathcal N\)\\
&\leq d_K\( \frac{X}{\sqrt{\mathrm{Var}[X]}},\,\mathcal N\)+\frac43\(1-\sqrt{\mathrm{Var}[X]}\)^{2/3}\\
&= d_K\( \frac{X}{\sqrt{\mathrm{Var}[X]}},\,\mathcal N\)+\frac43\(\frac{{\mathrm{Var}[Y]}}{1+\sqrt{\mathrm{Var}[X]}}\)^{2/3}\\
&\leq d_K\( \frac{X}{\sqrt{\mathrm{Var}[X]}},\,\mathcal N\)+\frac43\({{\mathrm{Var}[Y]}}\)^{1/3},
\end{align}
where the last inequality follows from the assumption $\mathrm{Var}[Y]=1-\mathrm{Var}[X]\leq1$. Applying this to \eqref{eq:1/3} we obtain the assertion.
\end{proof}

\begin{proposition}\label{prop:independent}
Let $(X_i)_{i=1}^n$ be a collection of centred  independent random variables satisfying  $\sum_{i=1}^n\mathrm{Var}[X_i]=1$. Then 
\begin{align*}
d_W\(\sum_{i=1}^nX_i,\,\mathcal N\)&\leq2\sum_{i=1}^n  \[\({\mathrm{Var}[X_i]}\)^{1/2}\wedge d_W\(\frac {X_i}{\sqrt{\mathrm{Var}[X_i]}},\,\mathcal N\)\],\\
d_K\(\sum_{i=1}^nX_i,\,\mathcal N\)&\leq3\sum_{i=1}^n  \[\({\mathrm{Var}[X_i]}\)^{1/3}\wedge d_K\(\frac {X_i}{\sqrt{\mathrm{Var}[X_i]}},\,\mathcal N\)\].
\end{align*}
\end{proposition}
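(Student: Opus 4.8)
The plan is to realize the minimum appearing in each summand by splitting the index set according to which of the two competing quantities is the smaller, and then to handle the two resulting groups of variables by two genuinely different mechanisms. Throughout set $\sigma_i^2=\mathrm{Var}[X_i]$ and discard the indices with $\sigma_i=0$, since they contribute nothing to either side. First I would enlarge the probability space by an auxiliary family $(N_i)_{i=1}^n$ of independent Gaussian variables, independent of the $X_i$, with $N_i\sim\mathcal N(0,\sigma_i^2)$; then $\sum_i N_i\sim\mathcal N(0,1)$ because the variances sum to $1$. For the Kolmogorov estimate put $D_i=d_K(X_i/\sigma_i,\mathcal N)$ and split $[n]=A\cup B$ with $B=\{i:D_i<\sigma_i^{2/3}\}$ and $A=\{i:\sigma_i^{2/3}\le D_i\}$; for the Wasserstein estimate use $D_i=d_W(X_i/\sigma_i,\mathcal N)$ and the threshold $\sigma_i$ in place of $\sigma_i^{2/3}$.

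The first phase replaces, one index at a time, each $X_i$ with $i\in B$ by the corresponding $N_i$, passing from $S:=\sum_i X_i$ to $S':=\sum_{i\in A}X_i+\sum_{i\in B}N_i$. A single swap compares $R+X_i$ with $R+N_i$ for an $R$ independent of both $X_i$ and $N_i$. Conditioning on $R$ and writing the two distribution functions as $\mathbf E_R[F_{X_i}(t-R)]$ and $\mathbf E_R[F_{N_i}(t-R)]$ gives $d_K(R+X_i,R+N_i)\le d_K(X_i,N_i)$, which equals $D_i$ by scale invariance of $d_K$; the analogous argument with a fixed Lipschitz test function $h$ and $g_r(x)=h(r+x)$ gives $d_W(R+X_i,R+N_i)\le d_W(X_i,N_i)=\sigma_i D_i$ by homogeneity of $d_W$. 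Summing over $i\in B$ by the triangle inequality yields $d_K(S,S')\le\sum_{i\in B}D_i$ and $d_W(S,S')\le\sum_{i\in B}\sigma_i D_i$. Since each $\sigma_i\le1$ and, for $i\in B$, the minimum equals $D_i$, both right-hand sides are bounded by the $B$-part of the target sum.

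The purpose of the swap is that $S'$ now contains a genuine Gaussian block $N_B:=\sum_{i\in B}N_i\sim\mathcal N(0,v_B)$ with $v_B=\sum_{i\in B}\sigma_i^2$, so the remaining variables $\{X_i\}_{i\in A}$ can be stripped off against a standardizable normal reference. Applying Corollary \ref{cor:variance} with $X=N_B$ and $Y=\sum_{i\in A}X_i$ (independent, centred, variances summing to $1$) and using $d_K(N_B/\sqrt{v_B},\mathcal N)=0$ gives $d_K(S',\mathcal N)\le 3(\sum_{i\in A}\sigma_i^2)^{1/3}$, and likewise $d_W(S',\mathcal N)\le 2(\sum_{i\in A}\sigma_i^2)^{1/2}$. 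Subadditivity of the concave maps $x\mapsto x^{1/3}$ and $x\mapsto x^{1/2}$ then distributes the root, e.g. $(\sum_{i\in A}\sigma_i^2)^{1/3}\le\sum_{i\in A}\sigma_i^{2/3}=\sum_{i\in A}(\sigma_i^{2/3}\wedge D_i)$. Combining the two phases through $d(S,\mathcal N)\le d(S,S')+d(S',\mathcal N)$ and bounding the first-phase coefficient $1$ by $3$ (resp. $2$) yields the two asserted inequalities.

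The delicate point is the variance side of the Kolmogorov estimate, and it dictates the whole structure above. Removing a single summand $X_i$ from a sum perturbs its law by an amount governed by the modulus of continuity of the remaining sum's distribution function, and for an a priori discrete partial sum this perturbation can be of order one however small $\mathrm{Var}[X_i]$ is; hence a naive term-by-term ``pay the variance'' removal fails for $d_K$, and Proposition \ref{prop:variance} cannot be applied to an unsmoothed partial sum (a rescaling induction peeling one term at a time also fails, as the rescaling inflates the thresholds $\sigma_i^{2/3}$). Keeping the Gaussian block $N_B$ intact and invoking the standardizing Corollary \ref{cor:variance} rather than Proposition \ref{prop:variance} is exactly what supplies the bounded-density reference behind the $\varepsilon^{-2}$-versus-$\varepsilon$ optimization that produces the exponent $1/3$. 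The Wasserstein case is softer, since contraction under independent addition needs no smoothness at all; there the only bookkeeping is the degree-one homogeneity of $d_W$, which contributes the harmless factor $\sigma_i\le1$ in the first phase.
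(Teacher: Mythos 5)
Your proof is correct, and it takes a genuinely different route from the paper's at the decisive step. The ingredients are shared: your Phase~1 is essentially the paper's derivation of its two-variable swap bound \eqref{aux4} (replace a summand by an independent Gaussian of the same variance, conditioning on the rest), and both arguments then invoke Corollary~\ref{cor:variance} to convert the remaining discrepancy into a variance term. The difference is how the case $n\ge 3$ is assembled. The paper proves the two-variable inequality \eqref{aux7} and then asserts that iterating it on the renormalized partial sums $\sum_{j\le i}X_j/\sqrt{\mathrm{Var}\left[\sum_{j\le i}X_j\right]}$ yields the claimed display; but, as your parenthetical remark correctly observes, each iteration step actually produces the inflated threshold $\left(\mathrm{Var}[X_i]/\mathrm{Var}\left[\sum_{j\le i}X_j\right]\right)^{1/3}$ rather than $\left(\mathrm{Var}[X_i]\right)^{1/3}$, and the loss is real: for $n=3$ with $X_3\sim\mathcal N(0,1-\delta)$ and $X_1,X_2$ of variance $\delta/2$ but order-one Kolmogorov distance to normal, peeling $X_3$ first and applying the $n=2$ bound to $(X_1+X_2)/\sqrt{\delta}$ gives only an order-one bound, whereas the proposition (and your proof) gives $O(\delta^{1/3})$. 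Your global partition of $[n]$ according to which quantity realizes each minimum, a single Lindeberg pass swapping the whole block $B$, and one application of Corollary~\ref{cor:variance} against the intact Gaussian block $N_B$ require no renormalization anywhere, which is exactly why the un-inflated thresholds survive with the stated constants $2$ and $3$. The only point left implicit is the degenerate case $B=\emptyset$, where $N_B$ cannot be standardized; it is harmless, since then $\sum_{i\in A}\mathrm{Var}[X_i]=1$, so the asserted right-hand sides are at least $3$, resp.\ $2$, while trivially $d_K\le 1$ and $d_W\left(\sum_{i}X_i,\,\mathcal N\right)\le \E\left|\sum_i X_i\right|+\E|\mathcal N|<2$.
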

\begin{proof}First, let us observe that it suffices to assume that all $X_i$'s are non-degenerate. \\
Consider the case $n=2$.
For a random variable $X$, let us denote its distribution by $\mu_X$. Furthermore, let $\mathcal A$ be a family of Borel functions  $h:\R\rightarrow\R$ that  is closed for operations $h(\cdot)\rightarrow h(a\,(\cdot))$ and $h(\cdot)\rightarrow h((\cdot))+b$ for any $a\in(0,1), b\in\R$. Then, for $\mathcal N_{1}\sim \mathcal N(0,{\Var[X_i]})$, $i=1,2$, independent  of each other and of $X_1, X_2$, we have
\begin{align*}
&\sup_{h\in \mathcal A}\left|\E\[h(X_1+X_2)-h(\mathcal N)\]\right|=\sup_{h\in \mathcal A}\left|\E\[h(X_1+X_2)-h(\mathcal N_1+\mathcal N_2)\]\right|\\
&\leq \sup_{h\in \mathcal A}\left|\E\[h(X_1+X_2)-h( X_1+\mathcal N_2)\]\right|+\sup_{h\in \mathcal A}\left|\E\[h(X_1+\mathcal N_2)-h(\mathcal N_1+\mathcal N_2)\]\right|\\
&\leq \E\bigg[\sup_{h\in \mathcal A}\left|\E\Big[h(X_1+X_2)-h( X_1+\mathcal N_2)\big|X_1\Big]\right|\bigg]\\
&\ \ \ +\E\bigg[\sup_{h\in \mathcal A}\left|\E\Big[h(X_1+\mathcal N_2)-h( N_1+\mathcal N_2)\big|\mathcal N_2\Big]\right|\bigg]\\
&\leq \sum_{i=1}^2\sup_{h\in \mathcal A}\left|\E\[h\({X_i}\)-h\(\mathcal N_i\)\]\right|\\
&= \sum_{i=1}^2\sup_{h\in \mathcal A}\left|\E\[h\(\sqrt{\mathrm{Var}[{X_i}]}\frac{X_i}{\sqrt{\mathrm{Var}[{X_i}]}}\)-h\(\sqrt{\mathrm{Var}[{X_i}]}\,\mathcal N\)\]\right|\\
&\leq  \sum_{i=1}^2\sup_{h\in \mathcal A}\left|\E\[h\(\frac{X_i}{\sqrt{\mathrm{Var}[{X_i}]}}\)-h\(\mathcal N\)\]\right|.
\end{align*}
Taking $\{h\in\R^{\R}:\text{Lip(1)}\leq1\}$ or $\{(-\infty,t):t\in\R\}$ as $\mathcal A$, we arrive at
\begin{align}
d_W(X_1+X_2, \mathcal N)&\leq d_W\(\frac{X_1}{\sqrt{\mathrm{Var}[{X_1}]}},\, \mathcal N\)+d_W\(\frac{X_2}{\sqrt{\mathrm{Var}[{X_2}]}},\, \mathcal N\),\\\label{aux4}
d_K(X_1+X_2, \mathcal N)&\leq d_K\(\frac{X_1}{\sqrt{\mathrm{Var}[{X_1}]}},\, \mathcal N\)+d_K\(\frac{X_2}{\sqrt{\mathrm{Var}[{X_2}]}},\, \mathcal N\).
\end{align}
From this point the proofs for both of the distances are analogous, we therefore focus only  on the Kolmogorov distance. Combining \eqref{aux4} with \eqref{eq:1/3cor} we get 
\begin{align}\label{aux7}
d_K(X_1+X_2, \mathcal N)&\leq d_K\(\frac{X_1}{\sqrt{\mathrm{Var}[{X_1}]}},\, \mathcal N\)+3\[\({\mathrm{Var}[X_2]}\)^{1/3}\wedge d_K\(\frac{X_2}{\sqrt{\mathrm{Var}[{X_2}]}},\, \mathcal N\)\].
\end{align}
Since we can clearly exchange $X_1$ with $X_2$, we complete the proof in this case ($n=2$) with 
$$d_K(X_1+X_2, \mathcal N)\leq1=\mathrm{Var}[{X_1}]+\mathrm{Var}[{X_2}]\leq 3\[\(\mathrm{Var}[{X_1}]\)^{1/3}+\(\mathrm{Var}[{X_2}]\)^{1/3}\].$$ 
In the case $n\geq3$ it is enough to iterate the bound \eqref{aux7} until obtaining 
\begin{align*}
d_K\(\sum_{i=1}^n X_i, \mathcal N\)&\leq d_K\(\frac{X_1+X_2}{\sqrt{\mathrm{Var}[{X_1+X_2}]}},\, \mathcal N\)+3\sum_{i=3}^n\[\({\mathrm{Var}[X_i]}\)^{1/3}\wedge d_K\(\frac{X_i}{\sqrt{\mathrm{Var}[{X_i}]}},\, \mathcal N\)\],
\end{align*}
and then apply the already proven bound for $n=2$ to the first term on the right-hand side.
\end{proof}

	\bibliographystyle{plain}
\bibliography{bibliography}

\begin{thebibliography}{10}

\bibitem{BKR}
A.~D. Barbour, Micha{\l} Karo\'{n}ski, and Andrzej Ruci\'{n}ski.
\newblock A central limit theorem for decomposable random variables with
  applications to random graphs.
\newblock {\em J. Combin. Theory Ser. B}, 47(2):125--145, 1989.

\bibitem{B}
Marc Barthelemy.
\newblock Class of models for random hypergraphs.
\newblock {\em Phys. Rev. E}, 106(6):Paper No. 064310, 10, 2022.

\bibitem{Bloznelis}
Mindaugas Bloznelis.
\newblock On combinatorial {H}oeffding decomposition and asymptotic normality
  of subgraph count statistics.
\newblock In {\em Mathematics and computer science. {III}}, Trends Math., pages
  73--79. Birkh\"{a}user, Basel, 2004.

\bibitem{BR}
B\'{e}la Bollob\'{a}s and Oliver Riordan.
\newblock Asymptotic normality of the size of the giant component in a random
  hypergraph.
\newblock {\em Random Structures Algorithms}, 41(4):441--450, 2012.

\bibitem{BR2}
B\'{e}la Bollob\'{a}s and Oliver Riordan.
\newblock Exploring hypergraphs with martingales.
\newblock {\em Random Structures Algorithms}, 50(3):325--352, 2017.

\bibitem{C}
Philip~S. Chodrow.
\newblock Configuration models of random hypergraphs.
\newblock {\em J. Complex Netw.}, 8(3):cnaa018, 26, 2020.

\bibitem{CDP}
Nicholas~A. Cook, Amir Dembo, and Huy~Tuan Pham.
\newblock Regularity method and large deviation principles for the
  {E}rd\h{o}s-{R}\'{e}nyi hypergraph.
\newblock {\em Duke Math. J.}, 173(5):873--946, 2024.

\bibitem{DN}
R.~W.~R. Darling and J.~R. Norris.
\newblock Structure of large random hypergraphs.
\newblock {\em Ann. Appl. Probab.}, 15(1A):125--152, 2005.

\bibitem{dJ87}
Peter de~Jong.
\newblock A central limit theorem for generalized quadratic forms.
\newblock {\em Probab. Theory Related Fields}, 75(2):261--277, 1987.

\bibitem{dj90}
Peter de~Jong.
\newblock A central limit theorem for generalized multilinear forms.
\newblock {\em J. Multivariate Anal.}, 34(2):275--289, 1990.

\bibitem{dJhyper}
Peter de~Jong.
\newblock A central limit theorem with applications to random hypergraphs.
\newblock {\em Random Structures Algorithms}, 8(2):105--120, 1996.

\bibitem{Dewar}
Megan Dewar, John Healy, Xavier P\'{e}rez-Gim\'{e}nez, Pawe{\l} Pra{\l}at, John
  Proos, Benjamin Reiniger, and Kirill Ternovsky.
\newblock Subhypergraphs in non-uniform random hypergraphs.
\newblock {\em Internet Math.}, page~23, 2018.

\bibitem{D24}
Christian D\"{o}bler.
\newblock The {B}erry-{E}sseen bound in de {J}ong's {CLT}.
\newblock {\em arXiv}, 2307.03189, 2024.

\bibitem{DK}
Christian D\"{o}bler and Kai Krokowski.
\newblock On the fourth moment condition for {R}ademacher chaos.
\newblock {\em Ann. Inst. Henri Poincar\'{e} Probab. Stat.}, 55(1):61--97,
  2019.

\bibitem{DP}
Christian D\"{o}bler and Giovanni Peccati.
\newblock Quantitative de {J}ong theorems in any dimension.
\newblock {\em Electron. J. Probab.}, 22:Paper No. 2, 35, 2017.

\bibitem{ER}
Peter Eichelsbacher and Benedikt Rednoss.
\newblock Kolmogorov bounds for decomposable random variables and subgraph
  counting by the {S}tein-{T}ikhomirov method.
\newblock {\em Bernoulli}, 29(3):1821--1848, 2023.

\bibitem{FK}
Alan Frieze and Micha{\l} Karo\'{n}ski.
\newblock {\em Introduction to random graphs}.
\newblock Cambridge University Press, Cambridge, 2016.

\bibitem{GZCN}
Gourab Ghoshal, Vinko Zlati\'{c}, Guido Caldarelli, and M.~E.~J. Newman.
\newblock Random hypergraphs and their applications.
\newblock {\em Phys. Rev. E (3)}, 79(6):066118, 10, 2009.

\bibitem{H}
Wassily Hoeffding.
\newblock A class of statistics with asymptotically normal distribution.
\newblock {\em Ann. Math. Statistics}, 19:293--325, 1948.

\bibitem{JLR}
Svante Janson, Tomasz {\L}uczak, and Andrzej Rucinski.
\newblock {\em Random graphs}.
\newblock Wiley-Interscience Series in Discrete Mathematics and Optimization.
  Wiley-Interscience, New York, 2000.

\bibitem{JN}
Svante Janson and Krzysztof Nowicki.
\newblock The asymptotic distributions of generalized {$U$}-statistics with
  applications to random graphs.
\newblock {\em Probab. Theory Related Fields}, 90(3):341--375, 1991.

\bibitem{KL}
Micha{\l} Karo\'{n}ski and Tomasz {\L}uczak.
\newblock The phase transition in a random hypergraph.
\newblock volume 142, pages 125--135. 2002.
\newblock Probabilistic methods in combinatorics and combinatorial
  optimization.

\bibitem{L}
M.~Ledoux.
\newblock Chaos of a {M}arkov operator and the fourth moment condition.
\newblock {\em Ann. Probab.}, 40(6):2439--2459, 2012.

\bibitem{LZ}
Yang~P. Liu and Yufei Zhao.
\newblock On the upper tail problem for random hypergraphs.
\newblock {\em Random Structures Algorithms}, 58(2):179--220, 2021.

\bibitem{NP}
David Nualart and Giovanni Peccati.
\newblock Central limit theorems for sequences of multiple stochastic
  integrals.
\newblock {\em Ann. Probab.}, 33(1):177--193, 2005.

\bibitem{PSbej}
Nicolas Privault and Grzegorz Serafin.
\newblock Normal approximation for sums of weighted
  {$U$}-statistics---application to {K}olmogorov bounds in random subgraph
  counting.
\newblock {\em Bernoulli}, 26(1):587--615, 2020.

\bibitem{PSejp2}
Nicolas Privault and Grzegorz Serafin.
\newblock Berry-{E}sseen bounds for functionals of independent random
  variables.
\newblock {\em Electron. J. Probab.}, 27:Paper No. 71, 37, 2022.

\bibitem{PSstoch}
Nicolas Privault and Grzegorz Serafin.
\newblock Normal approximation for generalized {$U$}-statistics and weighted
  random graphs.
\newblock {\em Stochastics}, 94(3):432--458, 2022.

\bibitem{R}
Adrian R\"{o}llin.
\newblock Kolmogorov bounds for the normal approximation of the number of
  triangles in the {E}rd\h{o}s-{R}\'{e}nyi random graph.
\newblock {\em Probab. Engrg. Inform. Sci.}, 36(3):747--773, 2022.

\bibitem{Ruc88}
Andrzej Ruci\'{n}ski.
\newblock When are small subgraphs of a random graph normally distributed?
\newblock {\em Probab. Theory Related Fields}, 78(1):1--10, 1988.

\bibitem{V}
Christophe Vuong.
\newblock {\em Contributions to stochastic analysis for non-diffusive
  structures}.
\newblock Phd thesis, Institut Polytechnique de Paris, 2024.
\newblock Available at \url{https://theses.hal.science/tel-04513594}.

\bibitem{Z}
Zhuo-Song Zhang.
\newblock Berry-{E}sseen bounds for generalized {$U$}-statistics.
\newblock {\em Electron. J. Probab.}, 27:Paper No. 134, 36, 2022.

\end{thebibliography}


\end{document}